\documentclass[intlimits,12pt,reqno]{amsart}
\usepackage{latexsym,amsthm,amsmath,amssymb,mathrsfs,mathtools}
\usepackage[T1]{fontenc}
\usepackage[utf8]{inputenc}
\usepackage[mathscr]{eucal}
\usepackage{enumerate}

\usepackage{tikz}
\usetikzlibrary{arrows,calc,patterns,cd}
\usepackage{wrapfig}

\usepackage{hyperref}
\usepackage{xcolor}
\hypersetup{
    colorlinks=true,
    linkcolor=blue,
    citecolor=blue,
    urlcolor=blue
}

\setlength{\oddsidemargin}{1pt}
\setlength{\evensidemargin}{1pt}
\setlength{\topmargin}{1pt}       
\setlength{\textheight}{650pt}    
\setlength{\textwidth}{460pt}     

\belowdisplayskip=18pt plus 6pt minus 12pt \abovedisplayskip=18pt plus 6pt minus 12pt
\parskip 8pt plus 1pt

\def\mvint_#1{\mathchoice
          {\mathop{\vrule width 6pt height 3 pt depth -2.5pt
                  \kern -9pt \intop}\limits_{\kern -3pt #1}}%
          {\mathop{\vrule width 5pt height 3 pt depth -2.6pt
                  \kern -6pt \intop}\nolimits_{#1}}%
          {\mathop{\vrule width 5pt height 3 pt depth -2.6pt
                  \kern -6pt \intop}\nolimits_{#1}}%
          {\mathop{\vrule width 5pt height 3 pt depth -2.6pt
                  \kern -6pt \intop}\nolimits_{#1}}}

\def\vi{\varphi}

\newcommand{\LL}{\mathcal L}

\newcommand{\bbbr}{\mathbb R}

\newcommand{\bbbn}{\mathbb N}
\newcommand{\bbbz}{\mathbb Z}

\newcommand{\overbar}[1]{\mkern 1.7mu\overline{\mkern-1.7mu#1\mkern-1.5mu}\mkern 1.5mu}

\def\diam{\operatorname{diam}}

\newtheorem{theorem}{Theorem}
\newtheorem*{theorem*}{Theorem}
\newtheorem{lemma}[theorem]{Lemma}
\newtheorem{corollary}[theorem]{Corollary}

\theoremstyle{definition}
\newtheorem{construction}[theorem]{Construction}
\newtheorem{remark}[theorem]{Remark}
\newtheorem*{remark*}{Remark}

\usepackage{setspace}

\title[Sobolev embedding]{Sobolev embedding for $M^{1,p}$ spaces is equivalent to a lower bound of the measure.}
\author[Alvarado]{Ryan Alvarado}
\address{Ryan Alvarado,\newline \indent Department of Mathematics and Statistics, Amherst College, 
\newline \indent 303 Seeley Mudd, Amherst,
Massachusetts 01002}
\email{rjalvarado\@@amherst.edu}

\author[G\'orka]{Przemys\l{}aw G\'orka}
\address{Przemys\l{}aw G\'orka,\newline \indent Department of Mathematics and Information Sciences, 
\newline \indent
Warsaw University of Technology,
\newline \indent Ul. Koszykowa 75, 00-662 Warsaw, Poland.
}
\email{pgorka\@@mini.pw.edu.pl}

\author[Haj\l{}asz]{Piotr Haj\l{}asz}
\address{Piotr Haj\l{}asz,\newline \indent Department of Mathematics, University of Pittsburgh, \newline \indent 301 Thackeray Hall, Pittsburgh,
Pennsylvania 15260}
\email{hajlasz@pitt.edu}
\thanks{P.H. was supported by NSF grant  DMS-1800457.}

\keywords{metric measure spaces, Sobolev inequality, Sobolev-Poincar\'e inequality, doubling measure, embeddings, lower mass  bound, measure density, lower Ahlfors regular, analysis on metric spaces}
\subjclass[2010]{Primary 30L99, 43A85, 46E35; Secondary 31E05, 42B37}

\begin{document}
\maketitle
\sloppy

\begin{abstract}
It has been known since 1996 that a lower bound for the measure, $\mu(B(x,r))\geq br^s$, implies Sobolev embedding theorems for Sobolev spaces $M^{1,p}$ defined on metric-measure spaces. We prove that,  in fact Sobolev embeddings for $M^{1,p}$ spaces are equivalent to the lower bound of the measure.
\end{abstract}

\section{Introduction}

A metric-measure space $(X,d,\mu)$ is a metric space
$(X,d)$ with a Borel measure $\mu$ such that $0<\mu\big(B(x,r)\big)<\infty$ for all $x\in X$ and all $r\in(0,\infty)$.
We will always assume that metric spaces have at least two points.
Sobolev spaces on metric-measure spaces, denoted by $M^{1,p}$, have been introduced in \cite{hajlasz2}, and they play an important role in the so called area of analysis 
on metric spaces \cite{AT,BB,HeinonenK,HKST}. Later, many other definitions have been introduced in \cite{cheeger,SMP,SMP2,shanmugalingam}, but in the important 
case when the underlying metric-measure space supports the Poincar\'e inequality,
all the definitions are equivalent \cite{hajlasz,keithz}.
One of the features of the theory of $M^{1,p}$ spaces is that, unlike most of other approaches, they do not require the underlying measure to be doubling
in order to have a rich theory. In this paper we will focus on understanding the relation between the Sobolev embedding theorems for spaces $M^{1,p}$ and the growth 
properties of the measure $\mu$. 

Let $(X,d,\mu)$ be a metric-measure space.
We say that $u\in M^{1,p}(X,d,\mu)$, $0<p<\infty$, if $u\in L^p(X,\mu)$ and there is a non-negative function $0\leq g\in L^p(X,\mu)$ such that
\begin{equation}
\label{eq21}
|u(x)-u(y)|\leq d(x,y)(g(x)+g(y))
\quad
\text{for $\mu$-almost all $x,y\in X$.}
\end{equation}
More precisely, there is a set $N\subseteq X$ of measure zero, $\mu(N)=0$, such that inequality \eqref{eq21} holds for all $x,y\in X\setminus N$.
By $D(u)$ we denote the class of all functions $0\leq g\in L^p(\mu)$ for which the above inequality is satisfied, and we set
$$
D_+(u):=\big\{g\in D(u):\Vert g\Vert_{L^p(X,\mu)}>0\big\}.
$$

The space $M^{1,p}$ is equipped with a `norm'
$$
\Vert u\Vert_{M^{1,p}(X,d,\mu)}=\Vert u\Vert_{L^p(X,\mu)}+\inf_{g\in D(u)} \Vert g\Vert_{L^p(X,\mu)}.
$$
We put the word `norm' in inverted commas, because it is a norm only when $p\geq 1$. In fact, if $p\geq 1$, the space $M^{1,p}$ is a Banach space, \cite{hajlasz2}.

If $\Omega\subseteq X$ is an open set, then $(\Omega,d,\mu)$ is a metric measure space and hence, the space $M^{1,p}(\Omega,d,\mu)$ is well defined. In other words,
$u\in M^{1,p}(\Omega,d,\mu)$ if $u\in L^p(\Omega,\mu)$ and there is $0\leq g\in L^p(\Omega,\mu)$ such that \eqref{eq21} holds for almost all $x,y\in\Omega$.

The space $M^{1,p}$ is a natural generalization of the classical Sobolev space, $W^{1,p}$, because if $p>1$ and $\Omega\subseteq\bbbr^n$ is a bounded domain with the 
$W^{1,p}$-extension property, then 
$$
M^{1,p}(\Omega,d_{\bbbr^n},\LL^n)=W^{1,p}(\Omega) 
\quad
\text{and the norms are equivalent,} 
$$
see \cite{hajlasz}. 
Here we regard $\Omega$ as a metric-measure space with the Euclidean metric $d_{\bbbr^n}$, and the Lebesgue measure $\LL^n$. 
When $p=1$, the space $M^{1,1}$
in the Euclidean setting is equivalent to the Hardy-Sobolev space \cite{koskelas}. While the spaces $M^{1,p}$ for $0<p<1$ do not have an obvious
interpretation in terms of classical Sobolev spaces, they found applications to Hardy-Sobolev spaces as well as Besov spaces and Triebel-Lizorkin spaces (see, e.g., \cite{koskelas,KosYZ}). 

The classical Sobolev embedding theorems for $W^{1,p}(\bbbr^n)$ have different character when $p<n$, $p=n$, or $p>n$. Therefore, in the
metric-measure context, in order to prove embedding theorems, we need a condition that would be the counterpart of the dimension of the space. 
It turns out that such a condition is provided by the lower bound for the growth of the measure
\begin{equation}
\label{eq22}
\mu(B(x,r))\geq b r^s.
\end{equation}
With this condition one can prove Sobolev embedding theorems for $M^{1,p}$ spaces where the embedding has a different character if $0<p<s$, $p=s$ or $p>s$, \cite{hajlasz,hajlasz2}.
For a precise statement see Theorem~\ref{embedding}, below. The purpose of this paper is to prove that condition \eqref{eq22} is actually equivalent 
to the existence of the embeddings listed in Theorem~\ref{embedding}. Precise statements are given in Theorem~\ref{LMeasINT}. Partial or related results
have been obtained in \cite{carron,gorka,hajlaszkt1,hajlaszkt2,hebey,Karak1,Karak2,korobenko,korobenkomr}.
An extension of the results in this work to certain classes of Triebel-Lizorkin and Besov spaces is given in a forthcoming paper \cite{AY}.

The first main result of this paper highlights the fact
that the lower measure condition in \eqref{eq22} 
characterizes certain $M^{1,p}$-Sobolev embeddings. See
Theorem~\ref{PlessS}, Theorem~\ref{AS-U-X}, and
Theorem~\ref{CX-03} in the body of the paper for a 
more detailed account of the following theorem.

\begin{theorem}\label{LMeasINT}
Suppose that $(X,d,\mu)$ is a uniformly perfect\footnote{See \eqref{U-perf}, below.} measure
metric space and fix parameters $\sigma\in(1,\infty)$,
and $s\in(0,\infty)$.
Then the following statements are equivalent.

\begin{enumerate}[(a)]
\item There exists a finite constant $\kappa>0$ such that
\begin{equation}
\label{vvc-1}
\mu\big(B(x,r)\big)\geq \kappa\,r^s
\quad
\text{for every $x\in X$ and every finite $r\in\big(0,{\rm diam}(X)\big].$}
\end{equation}

\item There exist $p\in(0,s)$ and $C\in(0,\infty)$ such that
for every ball $B_0:=B(x_0,R_0)$ with $x_0\in X$ and finite $R_0\in\big(0,{\rm diam}(X)\big]$,  one has
\begin{equation}
\label{hdx-1}
\left(\, \mvint_{B_0} |u|^{p^*}\, d\mu\right)^{\!\!\!1/p^*}\!\!\!\leq
C\bigg(\frac{\mu(\sigma B_0)}{R_0^s}\bigg)^{\!\!1/p}\!
\left[R_0\left(\,\mvint_{\sigma B_0} g^p\, d\mu\right)^{\!\!\!1/p}
\!\!+\left(\,\mvint_{\sigma B_0}|u|^p\, d\mu\right)^{\!\!\!1/p}\right],
\end{equation}
whenever $u\in M^{1,p}(\sigma B_0,d,\mu)$ and $g\in D(u)$.
Here, $p^*=sp/(s-p)$.

\item There exist $p\in(0,s)$ and $C\in(0,\infty)$ such that
for every ball $B_0:=B(x_0,R_0)$ with $x_0\in X$ and finite $R_0\in\big(0,{\rm diam}(X)\big]$, one has
\begin{equation}
\label{natalia}
\inf_{\gamma\in\bbbr}\left(\, \mvint_{B_0} |u-\gamma|^{p^*}\, d\mu\right)^{\!\!\! 1/p^*}\leq
C\bigg(\frac{\mu(\sigma B_0)}{R_0^s}\bigg)^{\!\!\! 1/p}R_0\left(\,\mvint_{\sigma B_0} g^p\, d\mu\right)^{\!\!\! 1/p},
\end{equation}
whenever $u\in M^{1,p}(\sigma B_0,d,\mu)$ and $g\in D(u)$.

\item There exist constants $C_1,C_2,\gamma\in(0,\infty)$ such that
\begin{equation}
\label{hdx-3}
\mvint_{B_0} {\rm exp}\bigg(C_1\frac{|u-u_{B_0}|}{\|g\|_{L^{s}(\sigma B_0)}}\bigg)^{\!\!\gamma}\,d\mu\leq C_2,
\end{equation}
whenever $B_0\subseteq X$ is a ball with radius at
most ${\rm diam}(X)$, $u\in M^{1,s}(\sigma B_0,d,\mu)$ and  $g\in D_+(u)$.

\item There exist $p\in(s,\infty)$ and constant $C\in(0,\infty)$ such that 
\begin{equation}\label{hdx-4}
|u(x)-u(y)|\leq C\,d(x,y)^{1-s/p}\|g\|_{L^{p}(X,\mu)},
\qquad\forall\,x,y\in X,
\end{equation}
Hence, every function $u\in M^{1,p}(X,d,\mu)$
has H\"older continuous representative of order $(1-s/p)$
on $X$.
\end{enumerate}
\end{theorem}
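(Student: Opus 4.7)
There are two directions: (a) implies each of (b)--(e), and each of (b)--(e) implies (a). The forward direction is the classical Sobolev--Morrey--Trudinger story for $M^{1,p}$ spaces, recorded in Theorem~\ref{embedding}. The key mechanism is that every $u\in M^{1,p}$ automatically satisfies the Poincar\'e inequality
\[
\Bigl(\mvint_{B}|u-u_{B}|^{p}\,d\mu\Bigr)^{\!1/p}\leq 2R\Bigl(\mvint_{B}g^{p}\,d\mu\Bigr)^{\!1/p}
\]
on every ball $B$ of radius $R$, simply by averaging the defining inequality $|u(x)-u(y)|\leq d(x,y)(g(x)+g(y))$ in $y\in B$. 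Combined with the lower mass bound (a), standard chaining and telescoping arguments on dyadic sub-balls yield (b), (c), and (e), while a Taylor expansion of the exponential in the borderline case $p=s$ produces (d). I would cite these forward implications from the references and concentrate the new work on the reverse direction.

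\textbf{A universal test function for the reverse direction.} Fix $x_{0}\in X$ and $r\in(0,\diam(X)]$ at which one wants to verify (a). By uniform perfectness there exists $y_{0}\in X$ with $\lambda r\leq d(x_{0},y_{0})\leq r$ for some structural constant $\lambda\in(0,1)$; setting $\rho:=\lambda r/4$ makes the balls $B(x_{0},\rho)$ and $B(y_{0},\rho)$ disjoint. The Lipschitz bump
\[
u(x):=\max\bigl(0,\,1-d(x,y_{0})/\rho\bigr)
\]
equals $1$ at $y_{0}$, is at least $1/2$ on $B(y_{0},\rho/2)$, vanishes outside $B(y_{0},\rho)$, and admits $g:=\rho^{-1}\chi_{B(y_{0},\rho)}$ as a member of $D(u)$. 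The same pair $(u,g)$ will be plugged into each of (b)--(e).

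\textbf{Extracting the lower mass bound.} The cleanest case is (e) $\Rightarrow$ (a): evaluating the H\"older estimate at $y_{0}$ and at a second point $y$ with $d(y_{0},y)\sim\rho$ produced by another application of uniform perfectness (so that $u(y)=0$), and using $\|g\|_{L^{p}(X,\mu)}=\rho^{-1}\mu(B(y_{0},\rho))^{1/p}$, the inequality $1=|u(y_{0})-u(y)|\leq C\,d(y_{0},y)^{1-s/p}\|g\|_{L^{p}(X,\mu)}$ rearranges directly to $\mu(B(y_{0},\rho))\gtrsim \rho^{s}$; varying the base point and using the triangle inequality then gives~(a) at every scale. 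For (d) $\Rightarrow$ (a), inserting $u$ into (d) on $B_{0}=B(y_{0},\rho)$ forces the left-hand side to exceed a multiple of $\exp(c/\|g\|_{L^{s}(\sigma B_{0})})^{\gamma}$, and the upper bound $C_{2}$ then forces $\mu(B(y_{0},\rho))\gtrsim \rho^{s}$ via $\|g\|_{L^s(\sigma B_0)}=\rho^{-1}\mu(B(y_0,\rho))^{1/s}$. For (c) $\Rightarrow$ (a), plug $u$ into (c) on $B_{0}=B(x_{0},r)$; for any $\gamma\in\mathbb R$, splitting on the cases $|\gamma|\leq 1/4$ and $|\gamma|\geq 1/4$ yields
\[
\int_{B_{0}}|u-\gamma|^{p^{*}}\,d\mu\geq (1/4)^{p^{*}}\min\bigl(\mu(B(y_{0},\rho/2)),\,\mu(B(x_{0},\rho))\bigr),
\]
while the right-hand side of (c) is comparable to $(\mu(B(y_{0},\rho))/r^{s})^{1/p}$. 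The resulting iterative inequality for the density $\mu(B(\cdot,\rho))/\rho^{s}$ at nearby points and scales is closed by a bootstrap argument, exploiting that the relevant self-improving exponent $(2s-p)/(s-p)$ exceeds $1$. Finally, (b) $\Rightarrow$ (a) is deduced either by the same bump argument or by first passing from (b) to (c) via the intrinsic Poincar\'e inequality applied to $u-u_{B_{0}}$, which absorbs the extra $|u|^{p}$ term on the RHS of (b).

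\textbf{Main obstacle.} The hardest implications are (b), (c) $\Rightarrow$ (a). The infimum over $\gamma$ on the left of (c) gives the inequality slack that a single-bump test function cannot exploit, so one must engineer a bump whose oscillation is controlled on two disjoint regions of known measure --- which is exactly what uniform perfectness supplies. Moreover, the bootstrap across scales must be handled with care: a clumsy iteration only shows that a bad density at one scale propagates to smaller scales rather than ruling it out, and it is the strict inequality $(2s-p)/(s-p)>1$ that makes the iteration close into an unconditional lower bound.
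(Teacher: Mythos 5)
Your treatment of (e) $\Rightarrow$ (a) is essentially the paper's: a single Lipschitz bump centered at a scale comparable to $r$, evaluated at two points separated by a distance $\sim\rho$, collapses the H\"older estimate to $\mu(B(y_0,\rho))\gtrsim\rho^s$. And citing Theorem~\ref{embedding} for the forward direction is legitimate (though the paper's proof of that theorem is not the averaging-to-Poincar\'e route you sketch; it is a level-set decomposition $E_j=\{g\le 2^j\}$ plus a chaining argument, which is what makes the exponents $p<1$ tractable). The problems are concentrated in (b), (c), (d) $\Rightarrow$ (a).

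The gap is the single-bump strategy. After inserting your bump $u$ supported in $B(y_0,\rho)$ into (c), the lower bound you obtain for the left side involves $\min\bigl(\mu(B(y_0,\rho/2)),\,\mu(B(x_0,\rho))\bigr)$, while the right side scales like $\mu(B(y_0,\rho))^{1/p}$. Without any doubling hypothesis there is no a priori relation between $\mu(B(y_0,\rho/2))$ and $\mu(B(y_0,\rho))$ — the measure can concentrate in a thin annulus near $\partial B(y_0,\rho)$, making the left side vanishingly small while the right side stays large, and the inequality becomes vacuous. The same issue infects (d): for $u$ supported exactly on $B_0=B(y_0,\rho)$, the set where $|u-u_{B_0}|$ is bounded away from zero has measure that you cannot control relative to $\mu(B_0)$, so the claimed divergence of the exponential integral does not follow. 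The announced ``bootstrap across scales with exponent $(2s-p)/(s-p)$'' does not repair this: you have one scalar inequality relating four independent measures at two base points, and there is no iteration scheme visible that closes it.

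What the paper does instead is to use a \emph{countable family} of bumps living at a \emph{fixed} scale, not a bootstrap across scales. In Construction~\ref{Cons1} (for (b) $\Rightarrow$ (a)) the bumps $u_j=\Phi_{r_{j+1},r_j}$ are concentric at the center $x$, with radii $r_j=(2^{-j-1}+1/2)r$ all pinched between $r/2$ and $3r/4$. Feeding these into (b) gives $\mu(B^{j+1})^{1/p^*}\le\rho\,2^j\mu(B^j)^{1/p}$ with $\rho$ involving $\mu(B)/r^s$, and this is closed by the abstract iteration Lemma~\ref{iteration} (which exploits $p<p^*$ and the boundedness of the sequence $\mu(B^j)$). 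For (c) and (d) there is the extra subtlety you flagged — the $\inf_\gamma$ (resp.\ the subtraction of $u_{B_0}$) means the bump must be ``non-trivial on both sides.'' The paper handles this by introducing the half-measure radius $\varphi_x(r)$ (Lemma~\ref{Gds.24}) and reducing, via uniform perfectness (Lemma~\ref{tx}, Lemma~\ref{en2-4}), to the case $r\le 3\varphi_x(r)/\lambda^2$. In that regime the bumps of Construction~\ref{Cons2} live at radii $\widetilde r_j\approx\varphi_x(r)\approx r$, and the crucial two-sided estimate $\min\bigl(\mu(\widetilde B^{j+1}),\mu(B\setminus\widetilde B^j)\bigr)=\mu(\widetilde B^{j+1})$ (see \eqref{eq51}) is \emph{forced} by the definition of $\varphi_x$. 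This is the missing ingredient that your single bump, placed at an off-center point $y_0$, does not supply. So the proposal identifies the right obstacle (the $\inf_\gamma$ slack, the need for two controlled regions) but does not supply the device — nested concentric bumps at the half-measure scale plus Lemma~\ref{iteration} — that actually overcomes it.
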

Here and in what follows 
the integral average is denoted by
$$
u_E=\mvint_Eu\, d\mu =\frac{1}{\mu(E)}\int_E u\, d\mu
$$
where $E\subseteq X$ is a $\mu$-measurable set of positive measure. Also,
$\tau B$ denotes the dilation of a ball $B$ by a factor $\tau\in(0,\infty)$, i.e., $\tau B:=B(x,\tau r)$.

\begin{remark}
The expression {\em `for every finite $r\in (0,\diam(X)]$'} is a concise way of saying {\em `for every $r\in (0,\diam(X)]$ if $\diam(X)<\infty$ and for every $r\in (0,\infty)$ if $\diam(X)=\infty$'.}
\end{remark}

\begin{remark}
Theorem~\ref{LMeasINT} asserts that in particular, if just {\em one} of the Sobolev embeddings {\em (b)}\,-{\em (e)} holds for {\em some} $p$, then all of the embeddings {\em (b)}\,-{\em (e)} hold for all $p$. This is a new self-improvement phenomena.  
\end{remark}

Given a metric-measure space, $(X,d,\mu)$, the measure $\mu$ is said to be {\tt doubling} provided there
exists a constant $C\in(0,\infty)$ such that
\begin{equation}\label{doub}
\mu(2B)\leq C\mu(B)\,\,\mbox{ for all balls\,\,$B\subseteq X$.}
\end{equation}
The smallest constant $C$ for which  \eqref{doub} is satisfied will be denoted
by $C_\mu$. It follows from \eqref{doub} that if $X$ contains at least two elements then 
$C_\mu>1$ (see \cite[Proposition~3.1, p.\,72]{AlMi}).
Moreover, as is well-known, the doubling property in \eqref{doub} implies the following quantitative
condition: for each $s\in\big[\log_2(C_\mu),\infty\big)$,
there exists $\kappa\in(0,\infty)$ satisfying
\begin{equation}
\label{Doub-2}
\frac{\mu\big(B(x,r)\big)}{\mu\big(B(y,R)\big)}\geq \kappa\bigg(\frac{r}{R}\bigg)^{\!\!s},
\end{equation}
whenever $x,y\in X$ satisfy $B(x,r)\subseteq B(y,R)$ and $0<r\leq R<\infty$ (see, e.g., \cite[Lemma~4.7]{hajlasz}).
Conversely, any measure satisfying \eqref{Doub-2} for
some $s\in(0,\infty)$ is necessarily doubling. 
Note that if the space $X$ is bounded then the above quantitative doubling property implies the lower measure bound in \eqref{vvc-1}.

The following theorem, which constitutes the second main result of our paper, is an analogue of Theorem~\ref{LMeasINT} for doubling measures. The reader is referred to Theorem~\ref{EMT}
and Theorem~\ref{CX-032}.

\begin{theorem}\label{DoubMeasINT}
Suppose that $(X,d,\mu)$ is a uniformly perfect measure
metric space and fix parameters $\sigma\in(1,\infty)$,
and $s\in(0,\infty)$.
Then the following statements are equivalent.
\begin{enumerate}[(a)]
\item There exists a constant $\kappa\in(0,\infty)$ satisfying
\begin{equation*}
\frac{\mu\big(B(x,r)\big)}{\mu\big(B(y,R)\big)}\geq \kappa\bigg(\frac{r}{R}\bigg)^{\!\!s},
\end{equation*}
whenever $x,y\in X$ satisfy $B(x,r)\subseteq B(y,R)$ and $0<r\leq R<\infty$.

\item There exist $p\in(0,s)$ and $C\in(0,\infty)$ such that
for every ball $B_0:=B(x_0,R_0)$ with $x_0\in X$ and $R_0\in(0,\infty)$, one has
\begin{equation*}
\left(\, \mvint_{B_0} |u|^{p^*}\, d\mu\right)^{\!\!\!1/p^*}\leq
CR_0\left(\, \mvint_{\sigma B_0} g^p\, d\mu\right)^{\!\!\!1/p} 
+ C\left(\, \mvint_{\sigma B_0}|u|^p\, d\mu\right)^{\!\!\!1/p},
\end{equation*}
whenever $u\in M^{1,p}(\sigma B_0,d,\mu)$ and $g\in D(u)$.

\item There exist $p\in(0,s)$ and $C\in(0,\infty)$ such that
for every ball $B_0:=B(x_0,R_0)$ with $x_0\in X$ and $R_0\in(0,\infty)$, one has
\begin{equation*}
\inf_{\gamma\in\bbbr}\left(\, \mvint_{B_0} |u-\gamma|^{p^*}\, d\mu\right)^{\!\!\!1/p^*}\leq
C R_0\left(\, \mvint_{\sigma B_0} g^p\, d\mu\right)^{\!\!\!1/p},
\end{equation*}
whenever $u\in M^{1,p}(\sigma B_0,d,\mu)$ and $g\in D(u)$.

\item There exist $p\in(s,\infty)$ and $C\in(0,\infty)$ 
with the property that for each $u\in M^{1,p}(X,d,\mu)$ and $g\in D(u)$, and each ball $B_0:=B(x_0,R_0)$ with $x_0\in X$ and $R_0\in\big(0,{\rm diam}(X)\big]$, finite, there holds
\begin{equation*}
|u(x)-u(y)|\leq C\,d(x,y)^{1-s/p}R_0^{s/p}\left(\,\mvint_{\sigma B_0}g^p\,d\mu\right)^{\!\!\!1/p}
\quad\mbox{for every } x,y\in B_0.
\end{equation*}
Hence, every function $u\in M^{1,p}(\sigma B_0,d,\mu)$
has H\"older continuous representative of order $(1-s/p)$
on $B_0$.
\end{enumerate}
\end{theorem}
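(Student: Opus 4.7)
The plan is to deduce Theorem~\ref{DoubMeasINT} from Theorem~\ref{LMeasINT} via a measure renormalization performed ball-by-ball. For each fixed ball $B_0:=B(x_0,R_0)$, introduce the Borel measure $\nu:=(R_0^s/\mu(\sigma B_0))\,\mu$ on the subspace $\sigma B_0$. Average integrals are invariant under this multiplicative rescaling, in the sense that $\mvint_E f\,d\mu=\mvint_E f\,d\nu$ for every measurable $E\subseteq\sigma B_0$, and the class $D(u)$ depends on the measure only through its null sets; hence the $M^{1,p}$-theories for $\mu$ and $\nu$ agree on $\sigma B_0$. By construction $\nu(\sigma B_0)/R_0^s=1$, so when Theorem~\ref{LMeasINT}(b) is applied to $\nu$ on the subspace the explicit factor $(\mu(\sigma B_0)/R_0^s)^{1/p}$ in its statement collapses to $1$, matching exactly the form of inequality (b) of the present theorem. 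Similarly, the $R_0^{s/p}$ factor in (d) emerges from the identity $\|g\|_{L^{p}(\sigma B_0,\nu)}=R_0^{s/p}(\mvint_{\sigma B_0}g^p\,d\mu)^{1/p}$ when converting Theorem~\ref{LMeasINT}(e) for $\nu$ back into its $\mu$-form.

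For the forward direction (a) $\Rightarrow$ (b), (c), (d), specializing (a) to balls $B(x,r)\subseteq\sigma B_0$ yields $\mu(B(x,r))\geq \kappa\sigma^{-s}(\mu(\sigma B_0)/R_0^s)\,r^s$, and after rescaling this becomes the uniform bound $\nu(B(x,r))\geq\kappa\sigma^{-s}r^s$. This is precisely the hypothesis (a) of Theorem~\ref{LMeasINT} on the subspace $(\sigma B_0,d,\nu)$. Invoking Theorem~\ref{LMeasINT}(b), (c), (e) for $\nu$ and translating back through the renormalization furnishes inequalities (b), (c), (d) of the present theorem on $B_0$; since $B_0$ is arbitrary, the forward direction follows.

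For the reverse implications (b), (c), or (d) $\Rightarrow$ (a), the same renormalization identifies the respective Sobolev inequality of the present theorem on $B_0$ with the corresponding inequality of Theorem~\ref{LMeasINT} for $\nu$ on the subspace $\sigma B_0$. Theorem~\ref{LMeasINT} then delivers $\nu(B(x,r))\geq\kappa\,r^s$ on $\sigma B_0$, which reads $\mu(B(x,r))\geq\kappa(\mu(\sigma B_0)/R_0^s)\,r^s$ for balls $B(x,r)\subseteq\sigma B_0$. This is the quantitative doubling (a) for the configuration $B(x,r)\subseteq\sigma B_0$, and a general containment $B(x,r)\subseteq B(y,R)$ is reduced to this shape by a chaining argument through balls of geometrically decreasing radii, with the uniformly perfect hypothesis guaranteeing the existence of such a chain.

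The main obstacle I anticipate is a boundary effect when invoking Theorem~\ref{LMeasINT} on the subspace $(\sigma B_0,d,\nu)$: balls centered near the boundary of $\sigma B_0$ with radius exceeding the distance to $X\setminus\sigma B_0$ are truncated relative to the ambient space, so neither the lower mass bound (forward direction) nor the Sobolev inequality (reverse direction) is immediate for these truncated balls. Handling this rigorously will require combining the uniformly perfect hypothesis with a careful choice, or enlargement, of the dilation factor $\sigma$, so that $\sigma B_0$ inherits enough geometric regularity to legitimately play the role of the ambient metric-measure space in Theorem~\ref{LMeasINT}.
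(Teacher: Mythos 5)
Your renormalization idea is clean for the \emph{forward} direction: with $\nu:=(R_0^s/\mu(\sigma B_0))\,\mu$ the factor $(\nu(\sigma B_0)/R_0^s)^{1/p}$ in Theorem~\ref{LMeasINT}(b) does collapse to $1$ for the single ball $B_0$, and this is really just a repackaging of the paper's observation that (a) of Theorem~\ref{DoubMeasINT} yields the $V(\sigma B_0,s,b)$ condition with $b=\kappa\sigma^{-s}\mu(\sigma B_0)R_0^{-s}$, whereupon Theorem~\ref{embedding} applies because $\mu(\sigma B_0)/(bR_0^s)=\sigma^s/\kappa$ is a pure constant. You do not in fact need to invoke Theorem~\ref{LMeasINT} on the subspace $(\sigma B_0,d,\nu)$ to get this, which is fortunate, because the $V$ condition only concerns ambient balls $B(x,r)\subseteq\sigma B_0$ and sidesteps the truncated-ball issue you flag.

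The serious gap is in the reverse directions (b), (c), (d) $\Rightarrow$ (a). To apply Theorem~\ref{LMeasINT}(b)$\Rightarrow$(a) on $(\sigma B_0,d,\nu)$ you would need the hypothesis of Theorem~\ref{LMeasINT}(b) to hold for \emph{every} ball $B_0'\subseteq\sigma B_0$ with the same measure $\nu=\nu_{B_0}$, i.e.\ you would need
\[
\left(\,\mvint_{B_0'} |u|^{p^*}\,d\nu\right)^{1/p^*}\leq
C\bigg(\frac{\nu(\sigma B_0')}{(R_0')^s}\bigg)^{1/p}
\left[R_0'\left(\,\mvint_{\sigma B_0'} g^p\,d\nu\right)^{1/p}
+\left(\,\mvint_{\sigma B_0'}|u|^p\,d\nu\right)^{1/p}\right]
\]
for all such $B_0'$. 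Your renormalization makes the prefactor equal to $1$ only when $B_0'=B_0$. For $B_0'\neq B_0$, what Theorem~\ref{DoubMeasINT}(b) gives you is the inequality \emph{without} any prefactor, so to pass to the displayed estimate you would need a lower bound on $\nu(\sigma B_0')/(R_0')^s=\mu(\sigma B_0')R_0^s/(\mu(\sigma B_0)(R_0')^s)$---but that is exactly the quantitative doubling you are trying to prove. The argument is therefore circular: the hypotheses of Theorem~\ref{LMeasINT} are never actually verified on the renormalized subspace. The chaining remark at the end does not rescue this, since the problem occurs before you can invoke Theorem~\ref{LMeasINT} at all. The paper avoids the issue entirely by proving the reverse implications directly: one tests the Sobolev/Poincar\'e/H\"older inequality against the explicit Lipschitz bump functions of Constructions~\ref{Cons1} and \ref{Cons2} (supported on dyadically shrinking sub-balls of $B(x,r)$, using uniform perfectness via Lemma~\ref{en2-4}), and then feeds the resulting recursive inequality $\mu(B^{j+1})^{1/p^*}\lesssim 2^j\mu(B^j)^{1/p}$ into the iteration Lemma~\ref{iteration}. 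This produces the relative bound $\mu(B(x,r))/\mu(B(y,R))\gtrsim(r/R)^s$ with an explicit constant, and it works because the test-function argument is run against the ball $B_0:=B(y,R)$ directly, never requiring a lower mass bound as an intermediate hypothesis.
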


\begin{remark}
Note that Theorem~\ref{DoubMeasINT} does not cover the case $p=s$. For that case see Theorems~\ref{AS-U-BDD} and~\ref{AS-U-BDD2} in the body of the paper.
\end{remark}

\subsection{Notation}

Open (metric) balls in a given metric space, $(X,d)$, shall be denoted by $B(x,r)=\{y:\, d(x,y)<r\}$ while the notation $\overbar{B}(x,r)=\{y\in X:\, d(x,y)\leq r\}$ will be used 
for closed balls. We allow the radius of a ball to  equal zero. If $r=0$, then
$B(x,r)=\emptyset$, but $\overbar{B}(x,r)=\{ x\}$. 
As a sign or warning, note that in general $\overbar{B}(x,r)$ is not necessarily equal to the topological closure of $B(x,r)$. By $C$ we will denote a general constant whose value may change within a single string of estimates.
While the center and the radius of a ball in a metric space is not necessarily uniquely determined, our balls will always have specified centers and radii so formally a ball is a triplet:\ a set, a center and a radius.
By writing $C(s,p)$ we will mean that the constant depends on parameters $s$ and $p$ only. 
$\mathbb{N}$ will stand for the set of positive integers, while $\mathbb{N}_0:=\mathbb{N}\cup\{ 0\}$. The characteristic function of a set $E$ will be denoted by $\chi_E$.

\section{Sobolev embedding on metric-measure spaces}
\label{S2}

The next result from \cite[Theorem~8.7]{hajlasz} provides a general embedding theorem for Sobolev spaces $M^{1,p}$ defined on balls in 
a metric measure space $X$. While this result has been proven in \cite{hajlasz} we decided to include a proof for the following reasons.
The paper \cite{hajlasz} does not include inequality \eqref{eq18}. While in the case $p^*\geq 1$, inequality \eqref{eq18} easily follows from \eqref{eq19}
(proven in \cite{hajlasz}) we do not know how to conclude it from \eqref{eq19} when $p^*<1$. Also some of the arguments given in \cite{hajlasz} are somewhat sketchy
and hard to follow so we decided that the result needs a complete and a detailed proof. At last, but not least, this result plays a fundamental role in the current paper
and proving it here makes the paper more complete and easier to comprehend. 
The proof presented below is similar, but slightly different than that in \cite{hajlasz}.
To facilitate the formulation of 
the result we introduce the following piece of notation.
Given constants $s,b\in(0,\infty)$, $\sigma\in[1,\infty)$  and a ball $B_0\subseteq X$ of radius $R_0$, the measure $\mu$ is said to satisfy the $V(\sigma B_0,s,b)$ condition\footnote{This condition
is a slight variation of the one in \cite[p.\,197]{hajlasz}.} provided
\begin{equation}
\label{measbound}
\mu\big(B(x,r)\big)\geq br^s
\quad
\text{whenever} 
\quad
\text{$B(x,r)\subseteq\sigma B_0$ and $r\in(0,\sigma R_0]$.}
\end{equation}
In this section we will only consider balls $B(x,r)$ with $r\in(0,\infty)$.

\begin{theorem}
\label{embedding}
Let $u\in M^{1,p}(\sigma B_0,d,\mu)$ and $g\in D(u)$, where $0<p<\infty$, $\sigma>1$ and
$B_0$ is a ball of radius $R_0$. Assume that the measure $\mu$ satisfies the condition $V(\sigma B_0,s,b)$.
Then there exist constants $C$, $C_1$ and $C_2$ depending on $s$, $p$ and $\sigma$ only such that
\begin{enumerate}
\item[(a)]
If $0<p<s$, then $u\in L^{p^*}(B_0)$, where $p^*=sp/(s-p)$, and the following inequalities are satisfied,
\begin{equation}
\label{eq18}
\left(\, \mvint_{B_0} |u|^{p^*}\, d\mu\right)^{\!\!\!1/p^*}\leq
C\left(\frac{\mu(\sigma B_0)}{bR_0^s}\right)^{\!\!\!1/p}\, R_0\left(\, \mvint_{\sigma B_0} g^p\, d\mu\right)^{\!\!\!1/p} 
+ C\left(\, \mvint_{\sigma B_0}|u|^p\, d\mu\right)^{\!\!\!1/p},
\end{equation}
\begin{equation}
\label{eq19}
\inf_{\gamma\in\bbbr}\left(\, \mvint_{B_0} |u-\gamma|^{p^*}\, d\mu\right)^{\!\!\!1/p^*}\leq
C\left(\frac{\mu(\sigma B_0)}{bR_0^s}\right)^{\!\!\!1/p}\, R_0\left(\, \mvint_{\sigma B_0} g^p\, d\mu\right)^{\!\!\!1/p}. 
\end{equation}
\item[(b)] If $p=s$ and $g\in D_+(u)$, then
\begin{equation*}
\mvint_{B_0}\exp\left( C_1b^{1/s}\frac{|u-u_{B_0}|}{\Vert g\Vert_{L^s(\sigma B_0)}}\right)\, d\mu\leq C_2. 
\end{equation*}
\item[(c)] If $p>s$, then
\begin{equation}
\label{eq29}
\Vert u-u_{B_0}\Vert_{L^\infty(B_0)}\leq 
C\left(\frac{\mu(\sigma B_0)}{bR_0^s}\right)^{\!\!\!1/p} R_0\left(\, \mvint_{\sigma B_0} g^p\, d\mu\right)^{\!\!\!1/p}.
\end{equation}
In particular, $u$ has a H\"older continuous representative on $B_0$ and
\begin{equation}
\label{eq30}
|u(x)-u(y)|\leq C b^{-1/p}d(x,y)^{1-s/p}\left(\, \int_{\sigma B_0} g^p\, d\mu\right)^{\!\!\!1/p}
\quad
\text{for all $x,y\in B_0$.}
\end{equation}
\end{enumerate}
\end{theorem}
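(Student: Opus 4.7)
My plan is to treat all three regimes through a common telescoping chain argument built from two inputs: the defining inequality $|u(y)-u(z)|\le d(y,z)(g(y)+g(z))$ for $g\in D(u)$, and the $V(\sigma B_0,s,b)$ lower mass bound. Fix a (Lebesgue) point $x\in B_0$ and concentric balls $B^i:=B(x,2^{-i}r_*)$, where $r_*$ of order $R_0$ is chosen depending on $\sigma$ so that $B^0\subseteq\sigma B_0$ while $B^0\supseteq B_0$. Because the defining inequality (together with the lower mass bound) forces $u_{B^i}\to u(x)$ off a set of measure zero, a double-average estimate yields
$$|u_{B^{i+1}}-u_{B^i}|\le\mvint_{B^{i+1}}\mvint_{B^i}d(y,z)(g(y)+g(z))\, d\mu(y)\, d\mu(z)\le 2r_i\Big(\mvint_{B^i}g\, d\mu+\mvint_{B^{i+1}}g\, d\mu\Big),$$
and telescoping gives $|u(x)-u_{B^0}|\le C\sum_{i\ge 0}r_i\mvint_{B^i}g\, d\mu$. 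Crucially, this bypasses the ratios $\mu(B^i)/\mu(B^{i+1})$, so no doubling is needed. A Poincar\'e comparison between $B^0$ and $B_0$ converts the bound into one for $|u(x)-u_{B_0}|$, introducing the factor $\mu(\sigma B_0)/(bR_0^s)$ that shows up in (a).

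Part (c) drops out immediately: by H\"older and $\mu(B^i)\ge br_i^s$, each term satisfies $\mvint_{B^i}g\, d\mu\le(br_i^s)^{-1/p}\Vert g\Vert_{L^p(\sigma B_0)}$, so the series $\sum_i r_i^{1-s/p}$ is geometric and sums to $\sim R_0^{1-s/p}$ whenever $p>s$, giving \eqref{eq29} and the $L^\infty$-to-mean estimate. Running two such chains based at $x$ and $y$ that merge at a ball of radius $\sim d(x,y)$ then produces the H\"older bound \eqref{eq30} with the stated constant $b^{-1/p}$.

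For part (a), I would first prove \eqref{eq19} and then deduce \eqref{eq18}. Interchanging the sum and integral in the chain bound rewrites it as a pointwise estimate of the form $|u(x)-u_{B_0}|\le Cb^{-1}\mathcal{I}g(x)$, where $\mathcal{I}$ is a fractional Riesz-type potential associated with the lower mass bound; under $V(\sigma B_0,s,b)$ this potential is of strong type $(p,p^*)$ by a classical Marcinkiewicz--Hedberg argument, yielding \eqref{eq19} when $p\ge 1$. For the range $0<p<1$, the Marcinkiewicz step is replaced by applying the chain bound to the Maz'ya truncations $u_k:=\min\{\max\{|u-u_{B_0}|-2^k,0\},2^k\}$ — an appropriate localization of $g$ lies in $D(u_k)$ — and summing the resulting dyadic weak-type inequalities via Cavalieri. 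Finally, \eqref{eq18} follows from \eqref{eq19} by taking $\gamma=u_{B_0}$ and bounding $|u_{B_0}|\le(\mvint_{\sigma B_0}|u|^p\, d\mu)^{1/p}$ via Jensen together with $\mu(B_0)\le\mu(\sigma B_0)$: the triangle inequality handles the case $p^*\ge 1$, while for $p^*<1$ one uses the $p^*$-power subadditivity $(a+b)^{p^*}\le a^{p^*}+b^{p^*}$ — this is exactly the point flagged in the paper as missing from \cite{hajlasz}.

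Finally, for part (b) I would apply \eqref{eq19} with $p=p_k<s$ chosen so that $p_k^*=k$ (that is, $p_k=ks/(k+s)$). Careful tracking of the constants shows $\mvint_{B_0}|u-u_{B_0}|^k\, d\mu\le(C/b^{1/s})^k\,k!\,\Vert g\Vert_{L^s(\sigma B_0)}^k$ after absorbing the $(\mu(\sigma B_0)/(bR_0^s))^{k/p_k}$ and $R_0^k$ contributions against each other; expanding the exponential in its Taylor series then gives the bound in (b) provided $C_1$ is taken small, via Stirling. The main technical obstacle throughout is the case $p^*<1$ of part (a): the usual triangle-inequality bookkeeping fails, the Marcinkiewicz tools do not apply off the shelf, and one must pass to the Maz'ya truncation/$p^*$-subadditivity route; this is precisely the gap the authors flag in their preamble to the theorem, and once it is handled cleanly the remaining assertions follow by standard Euclidean-style arguments.
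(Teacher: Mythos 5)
The proposal takes a genuinely different route — a chain of concentric balls rather than the paper's chain of points along the level sets $E_j=\{g\le 2^j\}$ — but the route fails under the hypotheses of the theorem, which assume only the lower mass bound $V(\sigma B_0,s,b)$ and \emph{no doubling}. Three concrete problems:

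\textbf{(1) Chain convergence and Lebesgue points.} The telescoping step requires $u_{B^i}\to u(x)$ for $\mu$--a.e.\ $x$ and $\sum_i r_i\mvint_{B^i}g\,d\mu<\infty$. In the range $p\le s$ (parts (a) and (b)) the only control you have on $\mvint_{B^i}g\,d\mu$ from the lower mass bound and H\"older is $\mvint_{B^i}g\,d\mu\le (b r_i^s)^{-1/p}\|g\|_{L^p}$, which makes each summand $r_i\mvint_{B^i}g\,d\mu\lesssim r_i^{1-s/p}$ \emph{diverge} as $i\to\infty$; the usual rescue is the Hardy--Littlewood maximal function, but the maximal theorem and hence the Lebesgue differentiation theorem are not available for general non-doubling $\mu$. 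So the chain need not converge, and even when it does, its limit is not known to equal $u(x)$ a.e. For $p>s$ the geometric decay repairs summability, but the H\"older step $\mvint_{B^i}g\,d\mu\le(\mvint_{B^i}g^p\,d\mu)^{1/p}$ requires $p\ge1$, so even part (c) is not fully covered when $s<p<1$. The paper's construction is designed precisely to avoid all of this: it never averages $u$ over balls; instead it shows $u|_{E_j}$ is $2^{j+1}$-Lipschitz and uses the lower mass bound (via Lemma~\ref{joasia}) only to locate points of $E_{k-1}$ inside small balls, building a chain of \emph{points}, not of averages.

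\textbf{(2) The fractional integral step.} The claimed ``strong type $(p,p^*)$ by a classical Marcinkiewicz--Hedberg argument'' rests on the boundedness of the maximal operator, which again requires doubling or Vitali-type covering machinery. Under $V(\sigma B_0,s,b)$ alone, this is not off-the-shelf. The same objection applies to the weak-type input that the Maz'ya truncation scheme needs for $0<p<1$: the truncation step converts weak to strong bounds, but you still have to \emph{produce} the weak bound, and the covering arguments that produce it are unavailable here. The paper's proof sidesteps potentials entirely, replacing the Hedberg-type estimate with the elementary summation of $a_k^{p^*}\mu(B_0\cap(E_k\setminus E_{k-1}))$ against $2^{kp}\mu(E_k\setminus E_{k-1})$.

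\textbf{(3) Deducing \eqref{eq18} from \eqref{eq19}.} You read the paper's caveat as being about $p^*<1$ and propose $p^*$-power subadditivity as the fix, but the paper explicitly flags the case $p<1$: the obstruction is the step $|u_{B_0}|\le(\mvint_{\sigma B_0}|u|^p\,d\mu)^{1/p}$, which comes from Jensen and \emph{reverses} when $p<1$ (indeed $p<1$ is possible with $p^*\ge1$, since $p^*\ge1\iff p\ge s/(s+1)$). The $p^*$-subadditivity trick addresses an orthogonal technicality. The paper handles \eqref{eq18} by running the level-set chain argument a second time with $\gamma=0$ and bounding $\inf_{E_{k_0}\cap\sigma B_0}|u|$ by the $L^p$-average of $|u|$ via the non-trivial fact $\mu(E_{k_0})\ge\mu(\sigma B_0)/2$ (Lemma~\ref{piotr}), which survives $p<1$ because the relevant Chebyshev estimate is homogeneous.

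Finally, for part (b), the Taylor-series/Stirling route is a recognizable strategy, but the required uniform $k!$-bound with $k$-independent base constant needs precise tracking of the $p$-dependence of $C$ in \eqref{eq19} as $p_k\uparrow s$, which you assert but do not justify and which does not obviously come out of your own chain estimate. The paper instead re-uses the same level-set chain to show $a_k\lesssim(k-k_0)b^{-1/s}\|g\|_{L^s}$ linearly in $k$ and then splits $\mvint_{B_0}\exp(\cdot)$ over $E_{k_0}$ and its complement, closing the estimate with the lower mass bound rather than a combinatorial constant analysis.
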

\begin{remark}
If $p^*\geq 1$, then H\"older's inequality yields
\begin{equation}
\label{eq20}
\left(\, \mvint_{B_0} |u-u_{B_0}|^{p^*}\, d\mu\right)^{\!\!\!1/p^*}\leq
2\inf_{\gamma\in\bbbr}\left(\, \mvint_{B_0} |u-\gamma|^{p^*}\, d\mu\right)^{\!\!\!1/p^*}
\end{equation}
and hence we can replace the expression on the left hand side of \eqref{eq19} with the one
on the left hand side of \eqref{eq20}. Moreover, if in addition $p\geq1$, we also have that \eqref{eq18} easily follows from this new version of
\eqref{eq19}. However, we do not know how to conclude \eqref{eq18} from \eqref{eq19} when $p<1$.
\end{remark}
\begin{proof}
Throughout the proof by $C$ we will denote a generic constant that depends on $p$, $s$ and $\sigma$ {\em only}. The dependence 
of other quantities like $b$, $R_0$, $u$ or $g$ will be given in an explicit form. By writing $A\approx B$ we will mean that
the quantities $A$ and $B$ are non-negative and there is a constant $C\geq 1$ (depending on $p$, $s$ and $\sigma$ only)
such that $C^{-1}A\leq B\leq CA$.

Clearly, we can assume that $\int_{\sigma B_0}g^p\, d\mu>0$. Indeed, if the integral equals zero, $u$ is constant and the result is obvious. 

By replacing, if necessary, $g$ with $\widetilde{g}=g+\left(\mvint_{\sigma B_0} g^p\, d\mu\right)^{\!\!\!1/p}$ we may further assume that
\begin{equation}
\label{eq4}
g(x)\geq 2^{-(1+1/p)}\left(\, \mvint_{\sigma B_0} g^p\,  d\mu\right)^{\!\!\!1/p}>0
\quad
\text{for $\mu$-almost every $x\in\sigma B_0$.}
\end{equation}
Let $N\subseteq \sigma B_0$ be a set of measure zero such that the pointwise inequality \eqref{eq21} holds for all $x,y\in \sigma B_0\setminus N$. Define the sets
$$
E_j:=\{ x\in\sigma B_0\setminus N:\, g(x)\leq 2^j\},
\quad
j\in\bbbz.
$$
Clearly $E_{j-1}\subseteq E_j$. Since by \eqref{eq4}, $g>0$ almost everywhere in $\sigma B_0$,
\begin{equation}
\label{eq42}
\mu\left(\sigma B_0\setminus\bigcup_{j=-\infty}^\infty (E_j\setminus E_{j-1})\right)=0.
\end{equation}
It follows from the pointwise inequality \eqref{eq21} that
$u$ restricted to $E_j$ is $2^{j+1}$-Lipschitz, i.e.,
\begin{equation}
\label{eq8}
|u(x)-u(y)|\leq 2^{j+1}d(x,y)
\quad
\text{for all $x,y\in E_j$}.
\end{equation}
Also, the measure of the complement of each of the sets $E_j$ can be easily estimated from 
Chebyschev's inequality as follows,
\begin{equation}
\label{eq7}
\mu(\sigma B_0\setminus E_j) = \mu(\{x\in\sigma B_0:\, g(x)>2^j\})\leq 2^{-jp}\int_{\sigma B_0} g^p\, d\mu.
\end{equation}
Note that
\begin{equation}
\label{eq5}
\int_{\sigma B_0} g^p\, d\mu\approx
\sum_{j=-\infty}^\infty 2^{jp}\mu(E_j\setminus E_{j-1}).
\end{equation}
Fix $\gamma\in\bbbr$ arbitrarily and let
\begin{equation}
\label{eq43}
a_j:=\sup_{E_j\cap B_0} |u-\gamma|
\quad
\text{with}
\quad
a_j:=0 
\quad
\text{if} 
\quad
E_j\cap B_0=\emptyset.
\end{equation}
Clearly, $a_j\leq a_{j+1}$ and for $0<p<s$ we have
\begin{equation}
\label{eq6}
\int_{B_0} |u-\gamma|^{p^*}\, d\mu \leq\sum_{j=-\infty}^\infty a_j^{p^*}\mu(B_0\cap(E_j\setminus E_{j-1})).
\end{equation}
Note that we used here \eqref{eq42}, because we need to know that the sets $E_j\setminus E_{j-1}$ cover almost all points in the set $B_0$.

The idea of the proof in the case $0<p<s$ is to estimate the series at \eqref{eq6} by the series in \eqref{eq5}. Similar
ideas are also used in other cases $p=s$ and $p>s$.

We will need the following elementary result.
\begin{lemma}
\label{joasia}
If $B(x,r)\subseteq\sigma B_0$ and $\mu\big(B(x,r)\big)\geq 2\mu(\sigma B_0\setminus E_j)$ for some $j\in\bbbz$, then
$$
\mu(B(x,r)\cap E_j)\geq\frac{1}{2}\mu(B(x,r))>0.
$$
\end{lemma}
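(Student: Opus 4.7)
The plan is to argue directly from the definition of a metric-measure space and the elementary decomposition of the ball into its intersection with $E_j$ and its complement.

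First I would decompose the ball as the disjoint union
\[
B(x,r)=\big(B(x,r)\cap E_j\big)\cup\big(B(x,r)\setminus E_j\big),
\]
so that $\mu(B(x,r))=\mu(B(x,r)\cap E_j)+\mu(B(x,r)\setminus E_j)$. Since $B(x,r)\subseteq \sigma B_0$ by assumption, one has $B(x,r)\setminus E_j\subseteq \sigma B_0\setminus E_j$, hence $\mu(B(x,r)\setminus E_j)\leq \mu(\sigma B_0\setminus E_j)$. Invoking the hypothesis $\mu(B(x,r))\geq 2\mu(\sigma B_0\setminus E_j)$, I get $\mu(B(x,r)\setminus E_j)\leq \tfrac{1}{2}\mu(B(x,r))$, and subtracting from the total mass of the ball yields $\mu(B(x,r)\cap E_j)\geq \tfrac{1}{2}\mu(B(x,r))$.

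For the final strict inequality $\tfrac{1}{2}\mu(B(x,r))>0$, I would just appeal to the standing assumption that $(X,d,\mu)$ is a metric-measure space, which by definition requires $0<\mu(B(x,r))<\infty$ for every ball of positive radius. Note that $r>0$ here, since the hypothesis would otherwise force $B(x,r)=\emptyset$ and $\mu(\sigma B_0\setminus E_j)=0$, a case which, while logically vacuous for the conclusion, need not be considered in the applications of this lemma (where $B(x,r)$ will be a genuine ball).

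There is no real obstacle in this argument: it is a two-line measure-theoretic observation, extracting that when the ``bad set'' $\sigma B_0\setminus E_j$ (where $g$ is large) has small measure relative to $B(x,r)$, the ``good set'' $E_j$ must occupy at least half of $B(x,r)$. The entire content of the lemma is to package this simple observation for repeated use in the main proof, where it will ensure that on balls satisfying the stated measure threshold, one can select points in $E_j$ on which $u$ is Lipschitz with controlled constant.
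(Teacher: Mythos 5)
Your argument is correct and is exactly the paper's proof: decompose $B(x,r)$ into the disjoint pieces $B(x,r)\cap E_j$ and $B(x,r)\setminus E_j$, bound the latter by $\mu(\sigma B_0\setminus E_j)\leq\tfrac{1}{2}\mu(B(x,r))$ using the containment $B(x,r)\subseteq\sigma B_0$, and conclude; the strict positivity is just the standing hypothesis that balls of positive radius have positive measure.
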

\begin{proof}
Observe that
\begin{equation}
\begin{split}
0<\mu\big(B(x,r)\big)
&=\mu\big(B(x,r)\cap E_j\big)+\mu\big(B(x,r)\setminus E_j\big)
\leq\mu\big(B(x,r)\cap E_j\big)+\mu\big(\sigma B_0\setminus E_j\big)\\
&\leq\mu\big(B(x,r)\cap E_j\big)+\frac{1}{2}\mu\big(B(x,r)\big).
\end{split}
\nonumber
\end{equation}
The claim now follows.
\end{proof}
Let $k_0$ be the least integer such that
\begin{equation}
\label{eq15}
2^{k_0}\geq \left(\frac{2^{1/s}}{(1-2^{-p/s})(\sigma-1)}\right)^{\!\!\!s/p}(bR_0^s)^{-1/p}\left(\, \int_{\sigma B_0} g^p\, d\mu\right)^{\!\!\!1/p}.
\end{equation}
Then
\begin{equation}
\label{eq16}
2^{k_0}\approx (bR_0^s)^{-1/p}\left(\, \int_{\sigma B_0} g^p\, d\mu\right)^{\!\!\!1/p}
\end{equation}
Note that condition \eqref{eq15} is equivalent to
\begin{equation}
\label{eq2}
2^{-k_0p/s}\frac{2^{1/s}b^{-1/s}}{1-2^{-p/s}}\,\left(\,\int_{\sigma B_0} g^p\,d\mu\right)^{\!\!\!1/s}\leq (\sigma -1)R_0.
\end{equation}

\begin{lemma}
\label{piotr}
Under the above assumptions $\mu(E_{k_0})\geq\mu(\sigma B_0)/2$.
\end{lemma}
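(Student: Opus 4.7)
My approach is to combine Chebyshev's inequality \eqref{eq7} with the specific choice of $k_0$ dictated by \eqref{eq15}, and then invoke the $V(\sigma B_0,s,b)$ condition on the ball $\sigma B_0$ itself. The point is that the ornate constant in \eqref{eq15} was engineered precisely so that these two estimates close up with the factor $1/2$.

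First I would rewrite the minimality condition \eqref{eq15} in the form that is actually going to be used. Raising \eqref{eq15} to the $p$-th power and isolating $2^{-k_0 p}\int_{\sigma B_0} g^p\,d\mu$ on the left yields the equivalent bound
$$
2^{-k_0 p}\int_{\sigma B_0} g^p\,d\mu\leq \frac{(1-2^{-p/s})^s(\sigma-1)^s}{2}\,bR_0^s,
$$
which is just an algebraic reformulation of the defining inequality for $k_0$. By \eqref{eq7} with $j=k_0$, the left-hand side dominates $\mu(\sigma B_0\setminus E_{k_0})$.

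Next I would note that the lower mass bound $V(\sigma B_0,s,b)$ applied to the ball $\sigma B_0=B(x_0,\sigma R_0)$ itself (which trivially lies inside $\sigma B_0$ and has radius $\sigma R_0\leq \sigma R_0$) gives $\mu(\sigma B_0)\geq b(\sigma R_0)^s=b\sigma^s R_0^s$. Combining this with the display above produces
$$
\mu(\sigma B_0\setminus E_{k_0})\leq \frac{(1-2^{-p/s})^s(\sigma-1)^s}{2\sigma^s}\,\mu(\sigma B_0)\leq \frac{1}{2}\mu(\sigma B_0),
$$
where the last inequality follows from the elementary observation that $(1-2^{-p/s})(\sigma-1)<\sigma$ (each factor on the left is strictly less than the corresponding factor on the right, since $1-2^{-p/s}<1$ and $\sigma-1<\sigma$). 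Taking complements in $\sigma B_0$ immediately yields $\mu(E_{k_0})\geq \tfrac{1}{2}\mu(\sigma B_0)$, which is the claim.

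I do not anticipate any substantive obstacle; the proof is essentially a consistency check that the definition of $k_0$ in \eqref{eq15} is calibrated to its intended role. The only care needed is to verify that the $V(\sigma B_0,s,b)$ hypothesis genuinely applies to the ball $\sigma B_0$ itself (which is immediate from the definition \eqref{measbound} since the radius $\sigma R_0$ satisfies $\sigma R_0\leq \sigma R_0$) and to track the constants through the $p$-th power manipulation.
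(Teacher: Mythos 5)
Your proof is correct, and it takes a mildly different route than the paper. The paper argues by contradiction: assuming $\mu(E_{k_0})<\mu(\sigma B_0)/2$, it sets $r:=2^{1/s}b^{-1/s}\mu(\sigma B_0\setminus E_{k_0})^{1/s}$, uses Chebyshev \eqref{eq7} together with the defining property of $k_0$ (in the reformulated form \eqref{eq2}) to deduce $r<(\sigma-1)R_0$, so that the auxiliary ball $B(z_0,r)$ (centered at the center of $B_0$) is contained in $\sigma B_0$; applying $V(\sigma B_0,s,b)$ to that ball then yields $\mu(\sigma B_0)\geq br^s=2\mu(\sigma B_0\setminus E_{k_0})>\mu(\sigma B_0)$, a contradiction. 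You instead argue directly: you bound $\mu(\sigma B_0\setminus E_{k_0})$ above by combining Chebyshev with the raised-to-the-$p$th-power form of \eqref{eq15}, and then absorb the resulting $bR_0^s$ by applying the $V(\sigma B_0,s,b)$ condition to the ball $\sigma B_0$ itself (legitimate, since \eqref{measbound} permits $r=\sigma R_0$), closing the estimate with the elementary inequality $(1-2^{-p/s})(\sigma-1)<\sigma$. Both proofs use exactly the same three ingredients (the calibration of $k_0$, Chebyshev, and the lower mass bound), but the paper exploits the $(\sigma-1)$ factor \emph{geometrically}, to fit an auxiliary ball inside $\sigma B_0$, whereas you carry it through \emph{arithmetically}; your version is slightly more direct, while the paper's only ever invokes the mass bound on balls of radius strictly less than $\sigma R_0$.
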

\begin{proof}
Suppose to the contrary that $\mu(E_{k_0})<\mu(\sigma B_0)/2$. 
Then 
\begin{equation}
\label{eq41}    
\mu(\sigma B_0\setminus E_{k_0})>\mu(\sigma B_0)/2.
\end{equation}
Inequalities \eqref{eq7} and \eqref{eq2} yield
$$
r:= 2^{1/s}b^{-1/s}\mu(\sigma B_0\setminus E_{k_0})^{1/s}\leq
2^{1/s} b^{-1/s} 2^{-k_0p/s}\left(\,\int_{\sigma B_0} g^p\, d\mu\right)^{\!\!\!1/s}
<(\sigma-1)R_0.
$$
Therefore, if $z_0$ is the center of the ball $B_0$, then $B(z_0,r)\subseteq\sigma B_0$ so the $V(\sigma B_0,s,b)$ condition and \eqref{eq41} give
$$
\mu(\sigma B_0)\geq \mu(B(z_0,r))\geq br^s=2\mu(\sigma B_0\setminus E_{k_0})>\mu(\sigma B_0),
$$
which is an obvious contradiction.
\end{proof}

For $k>k_0$ and $i=0,1,\ldots,k-k_0-1$ we define
$$
r_{k-i}:=2^{1/s}b^{-1/s}2^{-(k-(i+1))p/s}\left(\,\int_{\sigma B_0}g^p\, d\mu\right)^{1/s}.
$$
Note that
\begin{equation}
\label{michal}
\begin{split}
r_k+r_{k-1}+\ldots+r_{k_0+1} 
&= 
2^{1/s}b^{-1/s}\left(\,\int_{\sigma B_0} g^p\, d\mu\right)^{1/s}\sum_{i=0}^{k-k_0-1} 2^{-(k-(i+1))p/s}\\
&<
2^{1/s}b^{-1/s}\left(\,\int_{\sigma B_0} g^p\, d\mu\right)^{1/s}\sum_{i=-\infty}^{k-k_0-1} 2^{-(k-(i+1))p/s}\\
&=
2^{-k_0 p/s}\frac{2^{1/s}b^{-1/s}}{1-2^{-p/s}}\left(\,\int_{\sigma B_0} g^p\, d\mu\right)^{1/s}\leq (\sigma-1)R_0.
\end{split}
\end{equation}
Assume that $E_k\cap B_0\neq\emptyset$ and
choose $x_k\in E_k\cap B_0$ arbitrarily. 
We will now use induction with respect to $i$ and define a sequence $x_{k-i}$, $i=1,\ldots,k-k_0$ such that $x_{k-i}\in\sigma B_0$ and
$$
x_{k-1}\in E_{k-1}\cap B(x_k,r_k),\
x_{k-2}\in E_{k-2}\cap B(x_{k-1},r_{k-1}),\ldots,
x_{k_0}\in E_{k_0}\cap B(x_{k_0+1},r_{k_0+1}).
$$
For $i=1$ we construct $x_{k-1}$ as follows. According to \eqref{michal}, $B(x_k,r_k)\subseteq \sigma B_0$ and hence the volume condition $V(\sigma B_0,s,b)$ and Chebyschev's inequality \eqref{eq7} yield
$$
\mu(B(x_k,r_k))\geq br_k^s=2\cdot 2^{-(k-1)p}\int_{\sigma B_0} g^p\, d\mu\geq2\mu(\sigma B_0\setminus E_{k-1}).
$$
Therefore, Lemma~\ref{joasia} implies that
$\mu (B(x_k,r_k)\cap E_{k-1})>0$
and we can find $x_{k-1}\in E_{k-1}\cap B(x_k,r_k)$. 
Clearly $x_{k-1}\in\sigma B_0$.
Suppose now that we already selected point
$x_{k-1},\ldots, x_{k-i}$ for some $1\leq i<k-k_0$ satisfying
\begin{equation*}
x_{k-j}\in \sigma B_0\cap E_{k-j}\cap B(x_{k-j+1},r_{k-j+1})
\quad
\text{for $j=1,\ldots,i$}.
\end{equation*}
It remains to show that we can select
$$
x_{k-(i+1)}\in \sigma B_0\cap E_{k-(i+1)}\cap B(x_{k-i},r_{k-i}).
$$
For any $y\in B(x_{k-i},r_{k-i})$ we have
\begin{equation*}
\begin{split}
d(y,x_k)&
\leq d(y,x_{k-i})+d(x_{k-i},x_{k-i+1})+\ldots+ d(x_{k-1},x_k)\\
&<
r_{k-i}+r_{k-i+1}+\ldots+r_k\leq (\sigma-1)R_0.
\end{split}
\end{equation*}
Since $x_k\in B_0$, it follows that $B(x_{k-i},r_{k-i})\subseteq\sigma B_0$.
Therefore the volume condition $V(\sigma B_0,s,b)$ and Chebyschev's inequality \eqref{eq7} yield
$$
\mu(B(x_{k-i},r_{k-i}))\geq br_{k-i}^s=2\cdot 2^{-(k-(i+1))p}\int_{\sigma B_0} g^p\, d\mu\geq2\mu(\sigma B_0\setminus E_{k-(i+1)}).
$$
Thus, Lemma~\ref{joasia} yields $\mu(B(x_{k-i},r_{k-i})\cap E_{k-(i+1)})>0$ and we can find
$$
x_{k-(i+1)}\in E_{k-(i+1)}\cap B(x_{k-i},r_{k-i}).
$$
Clearly $x_{k-(i+1)}\in\sigma B_0$. That completes the inductive argument.

Note that for $i=0,1,\ldots,k-k_0-1$,
$$
d(x_{k-i},x_{k-(i+1)})<r_{k-i}=
2^{1/s}b^{-1/s}2^{-(k-(i+1))p/s}\left(\,\int_{\sigma B_0} g^p\, d\mu\right)^{1/s}.
$$
Since $x_{k-i},x_{k-(i+1)}\in E_{k-i}$, $u$ is $2^{k-i+1}$-Lipschitz on $E_{k-i}$, and
$x_{k_0}\in E_{k_0}\cap\sigma B_0$
we have
\begin{equation}
\label{eq45}
\begin{split}
|u(x_k)-\gamma|
&\leq
\left(\sum_{i=0}^{k-k_0-1}|u(x_{k-i})-u(x_{k-(i+1)})|\right) + |u(x_{k_{0}})-\gamma|\\
&\leq
\left(\sum_{i=0}^{k-k_0-1} 2^{k-i+1} d(x_{k-i},x_{k-(i+1)})\right) + \sup_{E_{k_0}\cap\sigma B_0}|u-\gamma|\\
&<
4\cdot 2^{1/s}b^{-1/s}\left(\,\int_{\sigma B_0} g^p\, d\mu\right)^{1/s}
\sum_{i=0}^{k-k_0-1} 2^{(k-(i+1))(1-p/s)}+\sup_{E_{k_0}\cap\sigma B_0}|u-\gamma|\\
&=
4\cdot 2^{1/s}b^{-1/s}\left(\,\int_{\sigma B_0} g^p\, d\mu\right)^{1/s}
\sum_{j=k_0}^{k-1} 2^{j(1-p/s)} + \sup_{E_{k_0}\cap\sigma B_0}|u-\gamma|.
\end{split}
\end{equation}
This yields
\begin{equation}
\label{eq44}
a_k\leq 
4\cdot 2^{1/s}b^{-1/s}\left(\,\int_{\sigma B_0} g^p\, d\mu\right)^{1/s}
\sum_{j=k_0}^{k-1} 2^{j(1-p/s)} + \sup_{E_{k_0}\cap\sigma B_0}|u-\gamma|
\quad
\text{for all $k>k_0$.}
\end{equation}
Indeed, if $E_k\cap B_0\neq\emptyset$, then $a_k=\sup_{E_k\cap B_0}|u-\gamma|$.
Since $x_k\in E_k\cap B_0$ was selected {\em arbitrarily},
taking the supremum in \eqref{eq45} over $x_k\in E_k\cap B_0$ yields \eqref{eq44}. If $E_k\cap B_0=\emptyset$, then $a_k=0$ (see \eqref{eq43}) and \eqref{eq44} is trivially true.

Since $\mu(E_{k_0})>0$, by Lemma~\ref{piotr}, we can take $y\in E_{k_0}$. If $\gamma =u(y)$,  
then the Lipschitz continuity \eqref{eq8}, and \eqref{eq16} yield
\begin{equation}
\label{eq17}
\sup_{E_{k_0}\cap \sigma B_0}|u-\gamma|
\leq 2^{k_0+1}\diam(\sigma B_0) \leq
2^{k_0+2}\sigma R_0
\leq
C R_0(bR_0^s)^{-1/p}\left(\, \int_{\sigma B_0} g^p\, d\mu\right)^{\!\!\!1/p}.
\end{equation}

\noindent
{\em Proof of (a).}
First we will prove inequality \eqref{eq19}.
Let $\gamma=u(y)$ as in \eqref{eq17}. Recall that $a_k=\sup_{E_k\cap B_0}|u-\gamma|$.
Since $2^{1-p/s}>1$, we can estimate the finite sum in \eqref{eq44} by the convergent geometric series $\sum_{j=-\infty}^{k-1}2^{j(1-p/s)}$ so \eqref{eq44}
gives
$$
a_k\leq Cb^{-1/s}\left(\, \int_{\sigma B_0} g^p\, d\mu\right)^{\!\!\!1/s} 2^{k(1-p/s)}+\sup_{E_{k_0}\cap\sigma B_0} |u-\gamma|
\quad
\text{for all $k>k_0$.}
$$
However, since
$$
a_k=\sup_{E_k\cap B_0}|u-\gamma|\leq \sup_{E_{k_0}\cap\sigma B_0}|u-\gamma|
\quad
\text{for $k\leq k_0$}
$$ 
we actually have
$$
a_k\leq Cb^{-1/s}\left(\, \int_{\sigma B_0} g^p\, d\mu\right)^{\!\!\!1/s} 2^{k(1-p/s)}+\sup_{E_{k_0}\cap\sigma B_0} |u-\gamma|
\quad
\text{for all $k\in\bbbz$.}
$$
Therefore, \eqref{eq6}, \eqref{eq5}, and \eqref{eq17} yield
\begin{equation*}
\begin{split}
&\int_{B_0} |u-\gamma|^{p^*}\, d\mu\\
&\leq
C b^{-p^*/s}\left(\, \int_{\sigma B_0} g^p\, d\mu\right)^{\!\!\!p^*/s}\sum_{k=-\infty}^\infty 2^{kp}\mu(E_{k}\setminus E_{k-1}) 
+ C\left(\sup_{E_{k_0}\cap\sigma B_0}|u-\gamma|\right)^{\!\!p^*}\mu(B_0)\\
&\leq 
C b^{-p^*/s}\left(\, \int_{\sigma B_0} g^p\, d\mu\right)^{\!\!\!p^*/p}
+ C\left(\sup_{E_{k_0}\cap\sigma B_0}|u-\gamma|\right)^{\!\!p^*}\mu(B_0)\\
&\leq
Cb^{-p^*/s}\left(1+\frac{\mu(B_0)}{bR_0^s}\right)\left(\, \int_{\sigma B_0} g^p\, d\mu\right)^{\!\!\!p^*/p}
\leq
Cb^{-p^*/s}\frac{\mu(B_0)}{bR_0^s}\left(\, \int_{\sigma B_0} g^p\, d\mu\right)^{\!\!\!p^*/p}. 
\end{split}
\end{equation*}
In the last inequality we used the condition $V(\sigma B_0,s,b)$ to estimate
$1+\mu(B_0)/(bR_0^s)\leq 2\mu(B_0)/(bR_0^s)$. The above estimate easily implies inequality \eqref{eq19}.

Now it remains to prove inequality \eqref{eq18}. Take $\gamma =0$. 
Then $a_k=\sup_{E_k\cap B_0} |u|$. Let $b_{k_0}:=\inf_{E_{k_0}\cap\sigma B_0}|u|$. 
Since
$$
b_{k_0}^p\chi_{E_{k_0}}\leq |u|^p\chi_{\sigma B_0},
$$
Lemma~\ref{piotr} yields
$$
\frac{\mu(\sigma B_0)}{2} b_{k_0}^p\leq
b_{k_0}^p\mu(E_{k_0}) \leq
\int_{\sigma B_0} |u|^p\, d\mu
\qquad
\text{so}
\qquad
b_{k_0}\leq 2^{1/p}\left(\, \mvint_{\sigma B_0}|u|^p\, d\mu\right)^{\!\!\!1/p}\, .
$$
The Lipschitz continuity \eqref{eq8}, and \eqref{eq16} yield
\begin{equation*}
\begin{split}
\sup_{E_{k_0}\cap \sigma B_0}|u|
&\leq 2^{k_0+1}\diam(\sigma B_0) + b_{k_0}\leq
2^{k_0+2}\sigma R_0+ 2^{1/p}\left(\, \mvint_{\sigma B_0} |u|^p\, d\mu\right)^{\!\!\!1/p}\\
&\leq
C\left(R_0(bR_0^s)^{-1/p}\left(\, \int_{\sigma B_0} g^p\, d\mu\right)^{\!\!\!1/p} + \left(\, \mvint_{\sigma B_0} |u|^p\, d\mu\right)^{\!\!\!1/p}\right).
\end{split}
\end{equation*}
Hence, a similar calculation as above gives
\begin{equation*}
\begin{split}
\int_{B_0} |u|^{p^*}\, d\mu
&\leq
C b^{-p^*/s}\left(\, \int_{\sigma B_0} g^p\, d\mu\right)^{\!\!\!p^*/s}\sum_{k=-\infty}^\infty 2^{kp}\mu(E_{k}\setminus E_{k-1}) 
+ C\left(\sup_{E_{k_0}\cap\sigma B_0}|u|\right)^{\!\!p^*}\mu(B_0)\\
&\leq 
C b^{-p^*/s}\left(\, \int_{\sigma B_0} g^p\, d\mu\right)^{\!\!\!p^*/p}
+ C\left(\sup_{E_{k_0}\cap\sigma B_0}|u|\right)^{\!\!p^*}\mu(B_0)\\
&\leq
Cb^{-p^*/s}\left(1+\frac{\mu(B_0)}{bR_0^s}\right)\left(\, \int_{\sigma B_0} g^p\, d\mu\right)^{\!\!\!p^*/p} +
C\left(\, \mvint_{\sigma B_0} |u|^p\, d\mu\right)^{\!\!\!p^*/p}\mu(B_0)\\
&\leq
Cb^{-p^*/s}\frac{\mu(B_0)}{bR_0^s}\left(\, \int_{\sigma B_0} g^p\, d\mu\right)^{\!\!\!p^*/p} +
C\left(\, \mvint_{\sigma B_0} |u|^p\, d\mu\right)^{\!\!\!p^*/p}\mu(B_0).
\end{split}
\end{equation*}
This estimate easily imply inequality \eqref{eq18}.

For the parts {\em (b)} and {\em (c)}, observe that since $p\geq s$, we have that $u\in M^{1,q}(\sigma B_0,d,\mu)$, where $q=s/(s+1)$, and hence $u\in L^{q^*}(B_0)=L^1(B_0)$ by part {\em (a)}. Therefore, $u_{B_0}$ is well defined and finite.

\noindent
{\em Proof of (b).}
Let $\gamma=u(y)$ be as in \eqref{eq17}
and $a_k=\sup_{E_k\cap B_0} |u-\gamma|$.
For $a>0$, Jensen's inequality and convexity of $e^t$ yield
\begin{equation*}
\begin{split}
&\mvint_{B_0}e^{a|u(x)-u_{B_0}|}\, d\mu(x)
\leq
\mvint_{B_0}\exp\left(\, \mvint_{B_0}a|u(x)-u(y)|\, d\mu(y)\right)\, d\mu(x)\\
&\leq
\mvint_{B_0}\mvint_{B_0}e^{a|u(x)-u(y)|}d\mu(y)\, d\mu(x)
\leq
\mvint_{B_0}e^{a|u(x)-\gamma|}\, d\mu(x)\, 
\mvint_{B_0}e^{a|u(y)-\gamma|}\, d\mu(y) \\
&=
\left(\, \mvint_{B_0}e^{a|u(x)-\gamma|}\, d\mu(x)\right)^{\!\!2}. 
\end{split}
\end{equation*}
Hence
\begin{equation}
\label{eq23}
\mvint_{B_0}\exp\left(C_1b^{1/s}\frac{|u-u_{B_0}|}{\Vert g\Vert_{L^s(\sigma B_0)}}\right)\, d\mu
\leq
\left(\, \mvint_{B_0}\exp\left( \frac{C_1b^{1/s}|u-\gamma|}{\Vert g\Vert_{L^s(\sigma B_0)}}\right)\, d\mu\right)^{\!\!2}
\end{equation}
and thus it suffices to estimate the right hand side of \eqref{eq23}, where $C_1$ is to be chosen.

Since $p=s$, inequality \eqref{eq17} reads as
\begin{equation}
\label{eq24}
\sup_{E_{k_0}\cap\sigma B_0} |u-\gamma|\leq
Cb^{-1/s}\left(\, \int_{\sigma B_0} g^s\, d\mu\right)^{\!\!\!1/s}.
\end{equation}
Given that $2^{j(1-s/p)}=1$, \eqref{eq44} and \eqref{eq24} yield
\begin{equation}
\label{eq25}
a_k\leq 
\widetilde{C} b^{-1/s}\left(\, \int_{\sigma B_0} g^s\, d\mu\right)^{\!\!\!1/s}(k-k_0)
\quad\text{for $k>k_0$.}
\end{equation}
It follows from \eqref{eq24} that
\begin{equation}
\label{eq26}
\frac{C_1b^{1/s}|u(x)-\gamma|}{\Vert g\Vert_{L^s(\sigma B_0)}}\leq C C_1
\quad
\text{for all $x\in E_{k_0}$,}
\end{equation}
while \eqref{eq25} yields
\begin{equation}
\label{eq27}
\frac{C_1b^{1/s}|u(x)-\gamma|}{\Vert g\Vert_{L^s(\sigma B_0)}}\leq 
\widetilde{C}C_1(k-k_0)
\quad
\text{for $k>k_0$ and all $x\in E_k\cap B_0$}.
\end{equation}
Take a constant $C_1$ in such a way that $\exp(\widetilde{C}C_1)=2^s$.

Let us split the integral that we need to estimate into two integrals
$$
\mvint_{B_0}\exp\left(\frac{C_1 b^{1/s}|u-\gamma|}{\Vert g\Vert_{L^s(\sigma B_0)}}\right)\, d\mu
=\frac{1}{\mu(B_0)}\int_{B_0\cap E_{k_0}} + \frac{1}{\mu(B_0)}\int_{B_0\setminus E_{k_0}}
= I_1+I_2.
$$
Estimate \eqref{eq26} gives
$$
I_1\leq\frac{\mu(B_0\cap E_{k_0})}{\mu(B_0)}\exp(CC_1)\leq\exp(CC_1),
$$
while estimate \eqref{eq27} and the fact that $\exp(\widetilde{C}C_1)=2^s$ yield
\begin{equation*}
\begin{split}
I_2
&\leq
\frac{1}{\mu(B_0)}\sum_{k=k_0+1}^\infty 
\exp(\widetilde{C}C_1(k-k_0))\mu\big(B_0\cap(E_k\setminus E_{k-1})\big)\\
&\leq \frac{2^{-sk_0}}{\mu(B_0)}\sum_{k=-\infty}^\infty 2^{sk}\mu(E_k\setminus E_{k-1})
\leq
C\frac{2^{-sk_0}}{\mu(B_0)}\int_{\sigma B_0} g^s\, d\mu 
\leq
C\frac{bR_0^s}{\mu(B_0)}\leq C,
\end{split}
\end{equation*}
where the last two estimates follow from \eqref{eq16} and the volume condition
$V(\sigma B_0,s,b)$, respectively. The proof in the case $p=s$ is now complete.

\noindent
{\em Proof of (c).}
Let $\gamma=u(y)$ be as in \eqref{eq17} and
$a_k=\sup_{E_k\cap B_0}|u-\gamma|$. Since $2^{1-p/s}<1$, 
we can estimate the finite sum at \eqref{eq44} for $k> k_0$, by the convergent geometric series
$\sum_{j=k_0}^\infty2^{j(1-p/s)}=C2^{k_0(1-p/s)}$. As such,
\begin{equation}
\label{eq28}
a_k\leq Cb^{-1/s}\left(\, \int_{\sigma B_0} g^p\, d\mu\right)^{\!\!\!1/s} 2^{k_0(1-p/s)}
+\sup_{E_{k_0}\cap\sigma B_0}|u-\gamma|
\quad\text{for $k> k_0$.}
\end{equation}
Then \eqref{eq28}, \eqref{eq16}, and \eqref{eq17} yield
$$
a_k\leq C(bR_0^s)^{-1/p}R_0\left(\, \int_{\sigma B_0}g^p\, d\mu\right)^{\!\!\!1/p}=
C\left(\frac{\mu(\sigma B_0)}{bR_0^s}\right)^{\!\!1/p} R_0
\left(\, \mvint_{\sigma B_0} g^p\, d\mu\right)^{\!\!\!1/p}
\quad
\text{for $k>k_0$.}
$$
Since the right hand side is a constant that does not depend on $k$, we conclude that $|u-\gamma|$ is bounded on $B_0$. More precisely, $|u-\gamma|$ equals almost everywhere to a function that is bounded in $B_0$ and \eqref{eq29} follows from the estimate
$$
\Vert u-u_{B_0}\Vert_{L^\infty(B_0)}\leq 2\Vert u-\gamma\Vert_{L^\infty(B_0)}.
$$

It remains to prove H\"older continuity of $u$ along with the estimate \eqref{eq30}.
If $x,y\in B_0$ and $R_1:=2d(x,y)\leq (\sigma-1)R_0/\sigma$, then
$x,y\in B_1:=B(x,R_1)$ and $\sigma B_1\subseteq\sigma B_0$. Therefore, estimate \eqref{eq29} applied to $B_1$ in place of $B_0$ yields
\begin{equation*}
\begin{split}
|u(x)-u(y)|
&\leq 
2\Vert u-u_{B_1}\Vert_{L^\infty(B_1)}\leq
C\left(\frac{\mu(\sigma B_1)}{bR_1^s}\right)^{\!\!1/p}R_1\, \left(\, \mvint_{\sigma B_1}g^p\, d\mu\right)^{\!\!\!1/p}\\
&=
Cb^{-1/p}d(x,y)^{1-s/p}\left(\, \int_{\sigma B_1} g^p\, d\mu\right)^{\!\!\!1/p}.
\end{split}
\end{equation*}
If $2d(x,y)>(\sigma-1)R_0/\sigma$, then \eqref{eq29} gives
\begin{equation*}
\begin{split}
|u(x)-u(y)|
&\leq 
2\Vert u-u_{B_0}\Vert_{L^\infty(B_0)}\leq
C\left(\frac{\mu(\sigma B_0)}{bR_0^s}\right)^{\!\!1/p}R_0\, \left(\, \mvint_{\sigma B_0}g^p\, d\mu\right)^{\!\!\!1/p}\\
&\leq
Cb^{-1/p}d(x,y)^{1-s/p}\left(\, \int_{\sigma B_0} g^p\, d\mu\right)^{\!\!\!1/p}.
\end{split}
\end{equation*}
This completes the proof of Theorem~\ref{embedding}.
\end{proof}

\section{Auxiliary Results}
\label{S3}

In this section we will collect some lemmata of a purely technical nature that will be needed in the proofs of the main results. 
Since the results collected here are not interesting on its own, the reader may skip this section for now and return to it when needed.

An open set $\Omega\subseteq\bbbr^n$ is a metric-measure space with the Euclidean metric and Lebesgue measure.
If $x\in\Omega$ and $r\in(0,\infty)$, then we can always find a radius $r_x<r$ such that
$|B(x,r_x)\cap\Omega|=\frac{1}{2}|B(x,r)\cap\Omega)|$. However, in a general metric-measure space $(X,d,\mu)$, it is not always possible to
find a concentric ball with half of the measure of the original ball, but for $x\in X$ and $r\in[0,\infty)$ we still can define
\begin{equation*}
\vi_x(r)=\sup\bigg\{s\in [0,r]:\, \mu(B(x,s))\leq\frac{1}{2}\mu(B(x,r))\bigg\}.
\end{equation*}
Note that for $s=0$, $B(x,s)=\emptyset$ so $\varphi_x(r)\geq 0$.
The basic properties of $\vi_x(r)$ are listed in the next lemma. The reader is reminded that
$\overline{B}(x,r):=\{y\in X:\,d(x,y)\leq r\}$, $x\in X$, $r\in[0,\infty)$.
In particular $\overline{B}(x,0)=\{ x\}$.

\begin{lemma}\label{Gds.24}
Suppose that $(X,d,\mu)$ is a metric-measure space and
fix $x\in X$, $r\in[0,\infty)$.
Then the following statements hold.
\begin{enumerate}[(i)]
\item $\varphi_x(\cdot)$ is nondecreasing, i.e., $\varphi_x(s)\leq\varphi_x(t)$ 
whenever $0\leq s\leq t<\infty$.
\vskip.08in
\item One has that
\begin{equation}
\label{LLk-3}
\mu\big(B(x,\varphi_x(r))\big)\leq\frac{1}{2}\,\mu\big(B(x,r)\big)
\leq\mu\big(\overline{B}(x,\varphi_x(r))\big).
\end{equation} 

\vskip.08in

\item $\varphi_x(r)\in[0,r]$, where $\varphi_x(r)=r$ if and only $r=0$.
\vskip.08in
\item If $\mu\big(\{x\}\big)=0$ and $r>0$, then $\varphi^j_x(r)>0$ for every $j\in\mathbb{N}_0$,
where
$$
\varphi^0_x(r):=r\,\,\mbox{ and }\,\,\,
\varphi^j_x(r):=\varphi_x\big(\varphi^{j-1}_x(r)\big),\,\, j\in\mathbb{N}.
$$
Moreover, the sequence $\{\varphi^j_x(r)\}_{j\in\mathbb{N}_0}$ is strictly decreasing, i.e.,
\begin{equation}
\label{uu-12}
r>\varphi_x(r)>\varphi_x^2(r)>\cdots>\varphi^j_x(r)>
\varphi^{j+1}_x(r)>\cdots>0,
\end{equation}
and $\mu\big(B(x,\varphi_x^j(r))\big)\leq 2^{-j}\,\mu\big(B(x,r)\big)$. Consequently,
$\lim\limits_{j\to\infty}\varphi^j_x(r)=0$. 
\end{enumerate}
\end{lemma}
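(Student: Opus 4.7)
The plan is to verify the four claims in order, exploiting monotonicity of balls, continuity of the measure $\mu$, and the standing assumption that $0<\mu(B(x,r))<\infty$ for every $x\in X$ and $r\in(0,\infty)$.

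For (i), I would argue directly from the definition: if $0\leq s\leq t$, then $\mu(B(x,s))\leq\mu(B(x,t))$, so any $\rho\in[0,s]$ in the defining set for $\varphi_x(s)$ also lies in $[0,t]$ and satisfies $\mu(B(x,\rho))\leq\tfrac12\mu(B(x,s))\leq\tfrac12\mu(B(x,t))$; passing to suprema yields $\varphi_x(s)\leq\varphi_x(t)$. For (iii), note $0$ lies in the defining set (since $B(x,0)=\emptyset$), so $\varphi_x(r)\in[0,r]$; moreover, if $r>0$ then $r$ itself is excluded from the defining set (as $\mu(B(x,r))>0$ rules out $\mu(B(x,r))\leq\tfrac12\mu(B(x,r))$), which I will upgrade to $\varphi_x(r)<r$ once (ii) is in hand. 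The case $r=0$ gives $\varphi_x(0)=0$ trivially.

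For (ii), I would use the two one-sided continuity properties of $\mu$. Pick a sequence $s_n$ in the defining set with $s_n\uparrow\varphi_x(r)$; since $B(x,s_n)\uparrow B(x,\varphi_x(r))$ and each $\mu(B(x,s_n))\leq\tfrac12\mu(B(x,r))$, continuity from below gives the first inequality in \eqref{LLk-3}. For the second, having established $\varphi_x(r)<r$ when $r>0$, I take any $t\in(\varphi_x(r),r]$; by the definition of supremum, $\mu(B(x,t))>\tfrac12\mu(B(x,r))$. Letting $t\downarrow\varphi_x(r)$ and using $\bigcap_{t>\varphi_x(r)}B(x,t)=\overline B(x,\varphi_x(r))$ together with continuity from above (valid because $\mu(B(x,r))<\infty$), I obtain the second inequality. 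The case $r=0$ is immediate since $\overline B(x,0)=\{x\}$.

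For (iv), I would first show $\varphi_x(r)>0$ whenever $r>0$ and $\mu(\{x\})=0$. Indeed, the family $\{B(x,s)\}_{s>0}$ decreases to $\{x\}$ as $s\downarrow 0$, so continuity from above yields $\lim_{s\downarrow 0}\mu(B(x,s))=\mu(\{x\})=0$; hence for all sufficiently small $s>0$, $\mu(B(x,s))\leq\tfrac12\mu(B(x,r))$, forcing $\varphi_x(r)\geq s>0$. By induction, $\varphi_x^j(r)>0$ for every $j\in\bbbn_0$. Strict monotonicity \eqref{uu-12} then follows from the already-proven inequality $\varphi_x(\rho)<\rho$ applied to each $\rho=\varphi_x^j(r)>0$. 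The measure bound $\mu(B(x,\varphi_x^j(r)))\leq 2^{-j}\mu(B(x,r))$ follows by iterating \eqref{LLk-3}. Finally, to conclude $\varphi_x^j(r)\to 0$, let $L=\lim_j\varphi_x^j(r)\geq 0$; then $B(x,L)\subseteq B(x,\varphi_x^j(r))$ gives $\mu(B(x,L))\leq 2^{-j}\mu(B(x,r))\to 0$, which forces $L=0$ since otherwise $\mu(B(x,L))>0$.

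The routine steps are (i) and (iii); the only real subtlety, which is also the main point to get right, is the second inequality of (ii): one must recognize that the intersection of the \emph{open} balls $B(x,t)$ as $t\downarrow\varphi_x(r)$ is the \emph{closed} ball $\overline B(x,\varphi_x(r))$, and invoke downward continuity of $\mu$ (which is legal thanks to the finite-measure hypothesis on bounded balls). All other assertions follow cleanly from (ii) together with continuity of $\mu$ at the point $\{x\}$.
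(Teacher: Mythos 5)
Your proof is correct and follows essentially the same route as the paper: an increasing approximation and continuity of $\mu$ from below for the first inequality in (ii), continuity from above along the shrinking open balls $B(x,t)$ as $t\downarrow\varphi_x(r)$ for the second, and iteration for (iv). The only minor variation is in showing $\varphi_x(r)>0$ in part (iv): you argue via $\mu(B(x,s))\to\mu(\{x\})=0$ as $s\downarrow 0$, whereas the paper instead notes that $\varphi_x(r)=0$ would contradict the already-proven second inequality in \eqref{LLk-3}, since it would give $\mu(\{x\})\geq\tfrac12\mu(B(x,r))>0$; either route works.
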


\begin{proof}
Given that {\it (i)} follows immediately from the definition of 
$\varphi_x$,
we begin by establishing the first inequality in \eqref{LLk-3}.
Take $r_j\uparrow\varphi_x(r)$. Then $\mu(B(x,r_j))\leq \frac{1}{2}\mu(B(x,r))$ for each $j$ and hence,
$\mu(B(x,\varphi_x(r)))=\lim_{j\to\infty}\mu(B(x,r_j))\leq\frac{1}{2}\mu(B(x,r))$. To prove the second inequality in \eqref{LLk-3}, 
observe 
that $\frac{1}{2}\mu(B(x,r))<\mu(B(x,s))$ for all $s>\varphi_x(r)$. 
Indeed, if $\varphi_x(r)<r$, this follows from the definition of $\varphi_x(r)$; if $\varphi_x(r)=r$, the first inequality in \eqref{LLk-3} implies that $r=0$ and the estimate is obvious.
Now passing to the limit in $\frac{1}{2}\mu(B(x,r))\leq\mu(B(x,s))$
as $s$ is decreasing to $\varphi_x(r)$, yields the second inequality in \eqref{LLk-3}.
This completes the proof of {\it (ii)}. The claim {\it (iii)} easily follows from the first inequality in \eqref{LLk-3}.

As concerns {\it (iv)}, it is clear that $r>\varphi_x(r)>0$ given
{\it (iii)} and the second inequality in \eqref{LLk-3}. Then \eqref{uu-12} 
can now be justified using an inductive argument. Finally, 
repeatedly calling upon \eqref{LLk-3} we have
$\mu\big(B(x,\varphi_x^j(r))\big)\leq 2^{-j}\,\mu\big(B(x,r)\big)$,
from which it follows that $\lim_{j\to\infty}\varphi^j_x(r)=0$, 
This finishes the proof of the lemma.
\end{proof}

Recall that a metric space $(X,d)$ is said to be {\tt uniformly} {\tt perfect} if there exists a constant $\lambda\in(0,1)$ with the property that
for each $x\in X$ and each $r\in(0,\infty)$ one has
\begin{equation}
\label{U-perf}
B(x,r)\setminus B(x,\lambda r)\neq\emptyset\quad
\mbox{ whenever }\quad X\setminus B(x,r)\neq\emptyset.
\end{equation}
Note that every connected space is uniformly perfect; 
however, the class of uniformly perfect spaces contains very disconnected sets such as the Cantor set. Moreover, observe that if \eqref{U-perf} holds for some $\lambda\in(0,1)$ then it holds for every $\lambda'\in(0,\lambda]$. Therefore, we may always assume that $0<\lambda<1/5$.

Since by our assumptions metric spaces have at least two points, it easily follows that uniformly perfect spaces have no isolated points.
\begin{lemma}
\label{tx}
Let $(X,d,\mu)$ be a uniformly perfect metric-measure space and let $0<\lambda<1/5$ be as in \eqref{U-perf}. If $x\in X$, $r\in \big(0,{\rm \diam}(X)\big]$ is finite, and $r>3\varphi_x(r)/\lambda^2$, then there is a ball $B(\widetilde{x},\widetilde{r})\subseteq B(x,r)$ such that
$\lambda r<\widetilde{r}\leq\min\{ r,3\varphi_{\widetilde{x}}(\widetilde{r})/\lambda^2\}$.
\end{lemma}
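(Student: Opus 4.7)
The plan is to construct $B(\widetilde{x},\widetilde{r})$ by invoking uniform perfectness to place $\widetilde{x}$ at a carefully controlled distance $t$ from $x$, then setting $\widetilde{r}:=r-t$, and finally reading off the required measure inequality from the fact that $B(x,\lambda^2 r/3)$ and $B(\widetilde{x},\lambda^2\widetilde{r}/3)$ will end up as \emph{disjoint} sub-balls of $B(\widetilde{x},\widetilde{r})$.

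The first step is the key concentration estimate: since $r>3\varphi_x(r)/\lambda^2$ forces $\varphi_x(r)<\lambda^2 r/3$, the value $\lambda^2 r/3$ strictly exceeds the supremum defining $\varphi_x(r)$ and so fails its defining condition, yielding
\[
\mu\bigl(B(x,\lambda^2 r/3)\bigr)\;>\;\tfrac{1}{2}\mu\bigl(B(x,r)\bigr).
\]
Next I produce $\widetilde{x}$: because $r\le\diam(X)$, the triangle inequality gives $\sup_{y\in X}d(x,y)\geq r/2$, so $X\setminus B(x,\rho)\neq\emptyset$ whenever $\rho<r/2$, and uniform perfectness \eqref{U-perf} is available at every such scale. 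Choosing $\rho$ slightly smaller than $r(3-\lambda^2)/6$, I invoke \eqref{U-perf} to obtain $\widetilde{x}\in B(x,\rho)\setminus B(x,\lambda\rho)$, so that $t:=d(x,\widetilde{x})$ lies in the window
\[
\tfrac{2\lambda^2 r}{3+\lambda^2}\;\leq\;t\;\leq\;\tfrac{r(3-\lambda^2)}{6}.
\]
Non-emptiness of this window and its containment in $[0,(1-\lambda)r)$ reduce to the polynomial inequalities $\lambda^4+12\lambda^2-9\leq 0$ and $\lambda^2-6\lambda+3>0$, both satisfied comfortably for $\lambda<1/5$.

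Setting $\widetilde{r}:=r-t$, the upper bound on $t$ is equivalent to $t+\lambda^2 r/3\leq\widetilde{r}$, which yields $B(x,\lambda^2 r/3)\subseteq B(\widetilde{x},\widetilde{r})$; the lower bound on $t$ is equivalent to $t\geq\lambda^2(r+\widetilde{r})/3$, which yields $B(\widetilde{x},\lambda^2\widetilde{r}/3)\cap B(x,\lambda^2 r/3)=\emptyset$; the identity $t+\widetilde{r}=r$ gives $B(\widetilde{x},\widetilde{r})\subseteq B(x,r)$; and $t<(1-\lambda)r$ gives $\widetilde{r}>\lambda r$. Since $B(x,\lambda^2 r/3)$ and $B(\widetilde{x},\lambda^2\widetilde{r}/3)$ are disjoint subsets of $B(\widetilde{x},\widetilde{r})$, additivity of $\mu$, the concentration estimate, and the inclusion $B(\widetilde{x},\widetilde{r})\subseteq B(x,r)$ combine to give
\[
\mu\bigl(B(\widetilde{x},\lambda^2\widetilde{r}/3)\bigr)\leq\mu\bigl(B(\widetilde{x},\widetilde{r})\bigr)-\mu\bigl(B(x,\lambda^2 r/3)\bigr)<\mu\bigl(B(\widetilde{x},\widetilde{r})\bigr)-\tfrac{1}{2}\mu\bigl(B(\widetilde{x},\widetilde{r})\bigr)=\tfrac{1}{2}\mu\bigl(B(\widetilde{x},\widetilde{r})\bigr),
\]
which places $\lambda^2\widetilde{r}/3$ inside the set defining $\varphi_{\widetilde{x}}(\widetilde{r})$ and hence delivers $\widetilde{r}\leq 3\varphi_{\widetilde{x}}(\widetilde{r})/\lambda^2$.

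The main obstacle is the bookkeeping in the second paragraph: the scale $\rho$ must be chosen so that \eqref{U-perf} is simultaneously applicable (which requires $X\setminus B(x,\rho)\neq\emptyset$, delivered by $r\leq\diam(X)$ together with the triangle inequality) \emph{and} so that the distance $\lambda\rho\leq d(x,\widetilde{x})<\rho$ returned by \eqref{U-perf} actually lands in the narrow admissible window dictated by the four inclusion, disjointness, containment, and lower-radius conditions. Everything else is triangle-inequality bookkeeping or elementary algebra in $\lambda$, and the concluding measure inequality is almost immediate from disjointness and additivity once these set-level relations are in place.
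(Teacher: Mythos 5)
Your proof is correct and follows essentially the same strategy as the paper: invoke uniform perfectness once to place $\widetilde{x}$ in an annulus around $x$, then exhibit a high-mass ball near $x$ and a disjoint small ball around $\widetilde{x}$, both inside $B(\widetilde{x},\widetilde{r})$, to force $\varphi_{\widetilde{x}}(\widetilde{r})$ to be a definite fraction of $\widetilde{r}$. The main cosmetic differences are that you use the open ball $B(x,\lambda^2 r/3)$ with the strict concentration estimate read directly off the definition of $\varphi_x$ (exploiting the hypothesis $\varphi_x(r)<\lambda^2 r/3$), whereas the paper uses the closed ball $\overline{B}(x,\varphi_x(r))$ together with the second inequality in \eqref{LLk-3}; and your radii ($\rho$ and $\widetilde{r}=r-t$) are fixed fractions of $r$, whereas the paper's $\rho=\varphi_x(r)/\lambda+2\lambda r$ and $\widetilde{r}=2\varphi_x(r)/\lambda+2\lambda r$ carry $\varphi_x(r)$ through explicitly. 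Both routes deliver $\lambda r<\widetilde{r}\le\min\{r,\,3\varphi_{\widetilde{x}}(\widetilde{r})/\lambda^2\}$ with comparable bookkeeping, and your elementary inequalities in $\lambda$ check out for $\lambda<1/5$.
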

\begin{proof}
Note that $X\setminus B\big(x,\varphi_x(r)/\lambda+2\lambda r\big)\neq\emptyset$.
Indeed, given that $\lambda<1/5$, the radius of the ball is less than $7r/15$ and hence, its diameter is less than 
$14r/15<\diam (X)$.
Since $(X,d)$ is uniformly perfect, we may choose a point
$$
\widetilde{x}\in B\big(x,\varphi_x(r)/\lambda+2\lambda r\big)\setminus B\big(x,\varphi_x(r)+2\lambda^2 r\big).
$$
With
$\widetilde{r}:=2\varphi_x(r)/\lambda+2\lambda r>\lambda r$,
we claim that
\begin{equation}
\label{dx.1}
\overline{B}(x,\varphi_x(r))\subseteq B(\widetilde{x},\widetilde{r})\subseteq B(x,r)
\quad 
\text{and}
\quad
B(\widetilde{x},2^{-1}\lambda \widetilde{r})\subseteq B(x,r)\setminus
\overline{B}(x,\varphi_x(r)).
\end{equation}
For the inclusion $\overline{B}(x,\varphi_x(r))\subseteq B(\widetilde{x},\widetilde{r})$, observe that if $z\in \overline{B}(x,\varphi_x(r))$, then
$$
d(z,\widetilde{x})\leq d(z,x)+d(x,\widetilde{x})
<\varphi_x(r)+[\varphi_x(r)/\lambda+2\lambda r]\leq \widetilde{r},
$$
given that $1/\lambda>1$. To prove  
$B(\widetilde{x},\widetilde{r})\subseteq B(x,r)$, observe that for $z\in B(\widetilde{x},\widetilde{r})$, we have
\begin{equation*}
\begin{split}
d(z,x)\leq d(z,\widetilde{x})+d(\widetilde{x},x)
&< [2\varphi_x(r)/\lambda+2\lambda r]+[\varphi_x(r)/\lambda+2\lambda r]\\
&=
3\varphi_x(r)/\lambda+4\lambda r<5\lambda r<r.
\end{split}
\end{equation*}
To finish the proof of \eqref{dx.1} we need to show that 
$B(\widetilde{x},2^{-1}\lambda  \widetilde{r})\subseteq B(x,r)\setminus\overline{B}(x,\varphi_x(r))$. 
From what we have already proved $B(\widetilde{x},2^{-1}\lambda \widetilde{r})\subseteq B(\widetilde{x},\widetilde{r})\subseteq B(x,r)$.
To show that
$B(\widetilde{x},2^{-1}\lambda  \widetilde{r})\subseteq X\setminus\overline{B}(x,\varphi_x(r))$, fix $z\in B(\widetilde{x},2^{-1}\lambda \widetilde{r})$. Then,
$$
\varphi_x(r)+2\lambda^2 r\leq d(x,\widetilde{x})
\leq d(x,z)+d(z,\widetilde{x})
< d(x,z)+2^{-1}\lambda \widetilde{r},
$$
$$
d(z,x)> \varphi_x(r)+2\lambda^2 r-2^{-1}\lambda \widetilde{r}
=\lambda^2 r>3\varphi_x(r)\geq \varphi_x(r),
$$
which in turn implies the desired inclusion. This finishes
the proof of \eqref{dx.1}. 

It follows from \eqref{dx.1} that $\overline{B}(x,\varphi_x(r))$ and $B(\widetilde{x},2^{-1}\lambda \widetilde{r})$ are disjoint subsets of $B(\widetilde{x},\widetilde{r})$ and
\begin{equation}
\label{rn-2}
\begin{split}
\mu\big(B(\widetilde{x},2^{-1}\lambda \widetilde{r})\big)&=\frac{1}{2}\big[\mu\big(B(\widetilde{x},2^{-1}\lambda \widetilde{r})\big)+\mu\big(B(\widetilde{x},2^{-1}\lambda \widetilde{r})\big)\big]\\
&\leq\frac{1}{2}\big[\mu\big(B(x,r)\setminus
\overline{B}(x,\varphi_x(r))\big)+\mu\big(B(\widetilde{x},2^{-1}\lambda \widetilde{r})\big)\big]\\
&\leq\frac{1}{2}\big[\mu\big(\overline{B}(x,\varphi_x(r))\big)+\mu\big(B(\widetilde{x},2^{-1}\lambda \widetilde{r})\big)\big]
\leq\frac{1}{2}\,\mu\big(B(\widetilde{x},\widetilde{r})\big),
\end{split}
\end{equation}
where, in obtaining the second inequality in \eqref{rn-2}, we have used
second inequality in \eqref{LLk-3}.
Inequality \eqref{rn-2} implies that $\varphi_{\widetilde{x}}(\widetilde{r})\geq 2^{-1}\lambda \widetilde{r}$. 
Hence, $\widetilde{r}\leq 2\varphi_{\widetilde{x}}(\widetilde{r})/\lambda\leq 3\varphi_{\widetilde{x}}(\widetilde{r})/\lambda^2$, since $1/\lambda>1$.  On the other hand, it is straightforward to verify that 
$$
\widetilde{r}=\frac{2\varphi_x(r)}{\lambda}+2\lambda r<\frac{2\lambda r}{3} + 2\lambda r<r.
$$
The proof is complete.
\end{proof}
\begin{lemma}
\label{en2-4}
Let $(X,d,\mu)$ be a uniformly perfect metric-measure space, fix
$s\in (0,\infty)$, and let $0<\lambda<1/5$ be as in \eqref{U-perf}.
\begin{enumerate}
\item[(i)]
Assume that there is a finite constant $C>0$ such that
$\mu\big(B(x,r)\big)\geq Cr^s$ whenever $x\in X$ and
finite $r\in\big(0,{\rm diam}(X)\big]$ satisfy 
$r\leq3\varphi_x(r)/\lambda^2$. Then
$\mu\big(B(x,r)\big)\geq \widetilde{C}r^s$ 
for every $x\in X$ and every finite $r\in\big(0,{\rm diam}(X)\big]$, 
where
$\widetilde{C}=C\lambda^{s}$.
\item[(ii)]
Assume that there is a finite constant $C>0$ such that
\begin{equation}
\label{eq48}
\frac{\mu(B(x,r))}{\mu(B(y,R))}\geq C\left(\frac{r}{R}\right)^s,
\end{equation}
whenever $x,y\in X$, $B(x,r)\subseteq B(y,R)$, $0<r\leq R<\infty$, and
$r\leq 3\varphi_x(r)/\lambda^2$. 

If $\widetilde{C}=C\lambda^s$, then
\begin{equation}
\label{eq49}
\frac{\mu(B(x,r))}{\mu(B(y,R))}\geq \widetilde{C}\left(\frac{r}{R}\right)^s,
\end{equation}
whenever $x,y\in X$, $B(x,r)\subseteq B(y,R)$, and $0<r\leq R<\infty$.
\end{enumerate}
\end{lemma}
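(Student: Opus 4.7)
The plan is to handle both parts via a single dichotomy: for a given ball $B(x,r)$ (or, in (ii), a pair $B(x,r)\subseteq B(y,R)$), I would split according to whether the auxiliary condition $r\leq 3\varphi_x(r)/\lambda^2$ holds. In the favorable case this is exactly the constraint under which the hypothesis is assumed, so the hypothesis applies directly to $B(x,r)$; since $\widetilde{C}=C\lambda^s\leq C$ (as $\lambda<1$), the desired inequality follows immediately, with room to spare.

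The substantive case is the complementary one, $r>3\varphi_x(r)/\lambda^2$, in which I would invoke Lemma~\ref{tx}. That lemma produces a ball $B(\widetilde{x},\widetilde{r})\subseteq B(x,r)$ with $\lambda r<\widetilde{r}\leq r$ and, crucially, $\widetilde{r}\leq 3\varphi_{\widetilde{x}}(\widetilde{r})/\lambda^2$. The last inequality is precisely the form in which the hypothesis can be applied, now at the subball $B(\widetilde{x},\widetilde{r})$. For part (i), I would then chain monotonicity of $\mu$ with the hypothesis applied to this subball:
\begin{equation*}
\mu\bigl(B(x,r)\bigr)\geq \mu\bigl(B(\widetilde{x},\widetilde{r})\bigr)\geq C\widetilde{r}^{\,s}>C(\lambda r)^s=\widetilde{C}r^s.
\end{equation*}
For part (ii), I would additionally verify that the inclusion $B(\widetilde{x},\widetilde{r})\subseteq B(x,r)\subseteq B(y,R)$ and the bound $0<\widetilde{r}\leq R<\infty$ allow the hypothesis to be applied to the pair $B(\widetilde{x},\widetilde{r})\subseteq B(y,R)$, yielding
\begin{equation*}
\frac{\mu\bigl(B(x,r)\bigr)}{\mu\bigl(B(y,R)\bigr)}\geq \frac{\mu\bigl(B(\widetilde{x},\widetilde{r})\bigr)}{\mu\bigl(B(y,R)\bigr)}\geq C\Bigl(\frac{\widetilde{r}}{R}\Bigr)^{\!s}>C\lambda^s\Bigl(\frac{r}{R}\Bigr)^{\!s}=\widetilde{C}\Bigl(\frac{r}{R}\Bigr)^{\!s}.
\end{equation*}

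I do not anticipate a deep obstacle; the heart of the argument is Lemma~\ref{tx}, and what remains is essentially bookkeeping to check that the radii stay in the admissible range $(0,\diam(X)]$ and that all inclusions propagate. The only genuine edge case occurs in (ii) when $r>\diam(X)$, which is possible only if $\diam(X)$ is finite; then $B(x,r)=X=B(y,R)$, the ratio on the left equals $1$, and the claim reduces to $1\geq\widetilde{C}(r/R)^s$. This I would handle by a short separate check, observing that testing the hypothesis on an admissible pair with $x=y$ and $r=R$ forces $C\leq 1$, so $\widetilde{C}=C\lambda^s\leq\lambda^s<1\leq(r/R)^{-s}$, and the borderline inequality holds automatically.
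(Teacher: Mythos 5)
Your argument takes the same route as the paper's: dichotomize on whether $r\leq 3\varphi_x(r)/\lambda^2$, handle the easy case directly (using $\lambda^s<1$), and in the complementary case invoke Lemma~\ref{tx} and monotonicity of $\mu$ to pass to the subball $B(\widetilde x,\widetilde r)$; in (ii), the chain $\widetilde r\leq r\leq R$ keeps the new pair $B(\widetilde x,\widetilde r)\subseteq B(y,R)$ admissible, and the case $r>\diam(X)$ is reduced to checking $C\leq 1$, exactly as in the paper.

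The one step you should not wave off is the existence of an ``admissible pair with $x=y$, $r=R$''. This is not automatic: the constraint $r\leq 3\varphi_x(r)/\lambda^2$ can fail for \emph{every} radius at a fixed center $x$. Indeed, uniformly perfect spaces may carry atoms, and if $\mu(\{x\})>0$ then for $r$ small enough that $\mu(B(x,r))<2\mu(\{x\})$ one has $\mu(B(x,s))\geq\mu(\{x\})>\tfrac12\mu(B(x,r))$ for all $s>0$, hence $\varphi_x(r)=0$ and the constraint $r\leq 3\varphi_x(r)/\lambda^2$ fails. The paper devotes a short paragraph to this: using uniform perfectness (so $X$ has no isolated points) it selects a nearby center $x$ with $\mu(\{x\})$ so small that the second inequality in \eqref{LLk-3} forces $\varphi_x(r)>0$. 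A cleaner alternative, entirely in the spirit of your own dichotomy, is to observe that Lemma~\ref{tx} already produces \emph{some} ball $B(\widetilde x,\widetilde r)$ with $\widetilde r\leq 3\varphi_{\widetilde x}(\widetilde r)/\lambda^2$, and testing \eqref{eq48} on the diagonal pair $B(\widetilde x,\widetilde r)\subseteq B(\widetilde x,\widetilde r)$ yields $C\leq 1$. Either way, a sentence is owed here; otherwise your plan matches the paper's proof and is correct.
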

\begin{proof}
In order to prove {\it (i)}, fix a point $x\in X$, a finite radius
$r\in\big(0,{\rm diam}(X)\big]$.
If  $r\leq 3\varphi_x(r)/\lambda^2$,  
then $\mu\big(B(x,r)\big)\geq Cr^s>\widetilde{C} r^s$ by assumption. Thus, in what follows we will also assume that $r>3\varphi_x(r)/\lambda^2$.
Let $B(\widetilde{x},\widetilde{r})\subseteq B(x,r)$ be a ball as in Lemma~\ref{tx}. Since
$\widetilde{r}\leq 3\varphi_{\widetilde{x}}(\widetilde{r})/\lambda^2$ and $\lambda r<\widetilde{r}\leq r\leq\diam (X)$, it follows that
$$
\mu(B(x,r))\geq \mu(B(\widetilde{x},\widetilde{r}))\geq  C\widetilde{r}^s\geq C\lambda^s r^s.
$$
It remains to prove {\it (ii)}. 
First observe that \eqref{eq48} implies that $C\leq 1$. Let $z\in X$ and $0<r<\diam(X)$. Since there are no isolated points in $X$, there are infinitely many points in $B(z,r/2)$ and so we can find $x\in B(z,r/2)$ with measure as small as we wish. In particular, we may find $x\in B(z,r/2)$ such that
\begin{equation}
\label{eq50}
\mu(\{ x\})<\frac{1}{2}\mu(B(z,r/2))\leq\frac{1}{2}\mu(B(x,r)),
\end{equation}
where the last inequality is a consequence of the inclusion 
$B(z,r/2)\subseteq B(x,r)$. It follows that $\varphi_x(r)>0$ as otherwise we would have $\overline{B}(x,\varphi_x(r))=\{ x\}$ and \eqref{eq50} would contradict the second inequality in \eqref{LLk-3}. Given that $\varphi_x(r)>0$, we can take $y=x$ and $0<r=R\leq 3\varphi_x(r)/\lambda^2$. Then \eqref{eq48} readily implies that $C\leq 1$.

Now we can finish the proof of {\it (ii)}.
Let $x,y\in X$, $B(x,r)\subseteq B(y,R)$, and $0<r\leq R<\infty$. 
We want to prove \eqref{eq49}. If $r\leq 3\varphi_x(r)/\lambda^2$, then the claim follows by assumptions. Thus we may assume that $r>3\varphi_x(r)/\lambda^2$.
If $r>\diam(X)$, then $B(x,r)=B(x,R)=X$ and \eqref{eq49} is trivially true, given that $1\geq C>\widetilde{C}$. Thus we may assume that $r\leq\diam(X)$. This allows us to find a ball $B(\widetilde{x},\widetilde{r})\subseteq B(x,r)$ as in Lemma~\ref{tx}.
Since $B(\widetilde{x},\widetilde{r})\subseteq B(y,R)$, $\widetilde{r}\leq r$, it follows that $0<\widetilde{r}\leq R$ and $\widetilde{r}\leq 3\varphi_{\widetilde{x}}(\widetilde{r})/\lambda^2$. Therefore,
\eqref{eq48} implies
$$
\frac{\mu(B(x,r))}{\mu(B(y,R))}\geq
\frac{\mu(B(\widetilde{x},\widetilde{r}))}{\mu(B(y,R))}\geq
C\left(\frac{\widetilde{r}}{R}\right)^s> C\lambda^s\left(\frac{r}{R}\right)^s, 
$$
where in the proof of the last inequality we used the estimate $\widetilde{r}>\lambda r$.
This finishes the proof of the lemma.
\end{proof}

In the sequel, we will also need the following well-known result. 

\begin{lemma}
\label{Lipbump}
Given $x\in X$ and $0\leq r<R<\infty$, there exists a $(R-r)^{-1}$-Lipschitz function $\Phi_{r,R}:X\to[0,1]$ such that $\Phi_{r,R}\equiv1$ on $\overline{B}(x,r)$ and\, $\Phi_{r,R}\equiv0$ on $X\setminus B(x,R)$. Consequently, one has $(R-r)^{-1}\chi_{B(x,R)}\in D\big(\Phi_{r,R}\big)$.
\end{lemma}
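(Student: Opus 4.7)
The plan is to construct $\Phi_{r,R}$ explicitly by composing the distance function $d(x,\cdot)$ with a piecewise linear cutoff on the real line. Specifically, I would set
$$
\Phi_{r,R}(y) := \min\!\left\{1,\,\frac{(R-d(x,y))_+}{R-r}\right\}, \qquad y\in X,
$$
where $(\cdot)_+$ denotes the positive part. The boundary conditions are immediate: if $y\in\overline{B}(x,r)$ then $d(x,y)\leq r$, so $(R-d(x,y))_+/(R-r)\geq 1$ and thus $\Phi_{r,R}(y)=1$; if $y\in X\setminus B(x,R)$ then $d(x,y)\geq R$, so $(R-d(x,y))_+=0$ and thus $\Phi_{r,R}(y)=0$. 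The range is automatically contained in $[0,1]$.

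For the Lipschitz estimate, I would use that $y\mapsto d(x,y)$ is $1$-Lipschitz, whence $y\mapsto (R-d(x,y))/(R-r)$ is $(R-r)^{-1}$-Lipschitz; since the maps $t\mapsto t_+$ and $t\mapsto \min\{1,t\}$ are each $1$-Lipschitz on $\bbbr$, the composition $\Phi_{r,R}$ is $(R-r)^{-1}$-Lipschitz on $X$.

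It remains to check that $g:=(R-r)^{-1}\chi_{B(x,R)}$ belongs to $D(\Phi_{r,R})$, i.e., that the pointwise inequality \eqref{eq21} holds with $u=\Phi_{r,R}$ and this $g$ for all $y,z\in X$. The only nontrivial case is when at most one of $y,z$ lies in $B(x,R)$; say $y\in B(x,R)$ and $z\notin B(x,R)$, so $g(y)+g(z)=(R-r)^{-1}$. Then $\Phi_{r,R}(z)=0$, and the global Lipschitz bound gives
$$
|\Phi_{r,R}(y)-\Phi_{r,R}(z)|=|\Phi_{r,R}(y)|\leq (R-r)^{-1}d(y,z)=d(y,z)\bigl(g(y)+g(z)\bigr).
$$
When both $y,z\in B(x,R)$, we have $g(y)+g(z)=2(R-r)^{-1}$ and the Lipschitz bound again suffices; when neither lies in $B(x,R)$, both sides vanish. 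Finally $g\in L^p$ since $\int g^p\,d\mu = (R-r)^{-p}\mu(B(x,R))<\infty$, as the underlying metric-measure space satisfies $\mu(B(x,R))<\infty$. I expect no real obstacle here: the argument is a standard cutoff construction, with the only mild subtlety being the case split in verifying membership in $D(\Phi_{r,R})$ when one of the two points sits outside $B(x,R)$.
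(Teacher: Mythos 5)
Your proposal is correct and matches the paper's construction; your single closed-form expression $\min\{1,(R-d(x,y))_+/(R-r)\}$ agrees pointwise with the paper's piecewise definition, and the verification of the Lipschitz constant and of membership in $D(\Phi_{r,R})$ (which the paper leaves as a "straightforward computation") is carried out correctly, including the relevant case split.
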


\begin{proof}
Fix a point $x\in X$, numbers $0\leq r<R<\infty$, and define $\Phi_{r,R}:X\to[0,1]$ by setting for each $y\in X$,
\begin{eqnarray*}
\Phi_{r,R}(y):=
\left\{
\begin{array}{ll}
\,\qquad 1\quad &\mbox{if $y\in \overline{B}(x,r)$,}
\\[6pt]
\displaystyle\frac{R-d(x,y)}{R-r}
&\mbox{if $y\in B(x,R)\setminus B(x,r)$,}
\\[15pt]
\,\qquad 0 &\mbox{if $y\in X\setminus B(x,R)$.}
\end{array}
\right.
\end{eqnarray*}
Then the claims follow from straightforward computations.
\end{proof}

\begin{construction}
\label{Cons1}
Let $(X,d,\mu)$ be {\it any} metric-measure space and $\sigma\in[1,\infty)$. Fix a ball $B:=B(x,r)$ with $x\in X$ 
and $r\in\big(0,{\rm diam}(X)\big]$, finite.
We define a collection of functions $\{u_j\}_{j\in\mathbb{N}}$
as follows: for each $j\in\mathbb{N}$, let 
$r_j:=(2^{-j-1}+2^{-1})r$ and set $B^j:=B(x,r_j)$. Then
$$
\frac{1}{2}r<r_{j+1}<r_j\leq\frac{3}{4}r,
\quad\forall\,j\in\mathbb{N}.
$$
Then for each $j\in\mathbb{N}$, define $u_j:X\to[0,1]$ by setting 
$u_j:=\Phi_{r_{j+1},r_j}$,
where the function $\Phi_{r_{j+1},r_j}$ is as in Lemma~\ref{Lipbump}.
Noting that $\displaystyle (r_j-r_{j+1})^{-1}=2^{j+2}r^{-1}$, we have that $u_j$ is $2^{j+2}r^{-1}$-Lipschitz on $X$ supported in $B^j$, and that $g_j:=2^{j+2}r^{-1}\chi_{B^j}\in D(u_j)$.
In particular, we have that $u_j\in M^{1,p}(\sigma B,d,\mu)$.
\hfill$\blacksquare$
\end{construction}

\begin{construction}
\label{Cons2}
Let $(X,d,\mu)$ be a uniformly perfect metric-measure space, suppose $\sigma\in[1,\infty)$, and let $0<\lambda<1/5$ be as in \eqref{U-perf}.
Fix a ball $B:=B(x,r)$ with $x\in X$  and $r\in\big(0,{\rm diam}(X)\big]$, finite. Assume that
$r\leq 3\varphi_x(r)/\lambda^2$. 
In particular, we have $0<\varphi_x(r)<r$ (see part {\it (iii)} of Lemma~\ref{Gds.24} for the second inequality).
Define a collection of 
Lipschitz functions $\{\widetilde{u}_j\}_{j\in\mathbb{N}}$
by first considering radii $\widetilde{r}_j:=(2^{-j-1}+2^{-1})\varphi_x(r)$, $j\in\mathbb{N}$ which
satisfy
\begin{equation}
\label{JG-924}
\frac{1}{2}\varphi_x(r)<\widetilde{r}_{j+1}<\widetilde{r}_j\leq\frac{3}{4}\varphi_x(r).
\end{equation}
For each $j\in\mathbb{N}$, let $\widetilde{u}_j:X\to[0,1]$ be defined by
$\widetilde{u}_j:=\Phi_{\widetilde{r}_{j+1},\widetilde{r}_j}$,
where the function $\Phi_{\widetilde{r}_{j+1},\widetilde{r}_j}$ is as in Lemma~\ref{Lipbump}. Then each  $\widetilde{u}_j$ is $2^{j+2}\varphi_x(r)^{-1}$-Lipschitz on $X$, and
$\widetilde{g}_j:=2^{j+2}\varphi_x(r)^{-1}\chi_{\widetilde{B}^j}\in D(\widetilde{u}_j)$, where
$\widetilde{B}^j:=B(x,\widetilde{r}_j)$. 
In particular, $\widetilde{u}_j\in M^{1,p}(\sigma B,d,\mu)$.

Observe that $\widetilde{u}_j\equiv1$ on $\widetilde{B}^{j+1}$ and $\widetilde{u}_j\equiv0$ on $B\setminus \widetilde{B}^j$. It follows that
for each $\gamma\in\mathbb{R}$ we have $|\widetilde{u}_j-\gamma\,|\geq\frac{1}{2}$ on at least one of the sets 
$\widetilde{B}^{j+1}$ and $B\setminus \widetilde{B}^j$. Observe that by
combining \eqref{LLk-3} in Lemma~\ref{Gds.24} and \eqref{JG-924}, we have
$$
\mu\big(\widetilde{B}^{j+1}\big)\leq\mu\big(B(x,\varphi_x(r))\big)\leq\frac{1}{2}\mu(B),
$$
and
$$
\mu\big(B\setminus \widetilde{B}^{j}\big)=\mu(B)-\mu\big(\widetilde{B}^{j}\big)
\geq\mu(B)-\mu\big(B(x,\varphi_x(r))\big)\geq\frac{1}{2}\mu(B).
$$
Therefore,
$$
\min\big\{\mu\big(\widetilde{B}^{j+1}\big),\,\mu\big(B\setminus \widetilde{B}^j\big)\big\} =\mu\big(\widetilde{B}^{j+1}\big)
$$
and hence, 
\begin{equation}
\label{eq51}
\text{$|\widetilde{u}_j-\gamma\,|\geq\frac{1}{2}$ on a subset of $B$ having $\mu$-measure at least  $\mu\big(\widetilde{B}^{j+1}\big)$.}
\end{equation}
\hfill$\blacksquare$
\end{construction}

The next result is an abstract iteration scheme that will be applied many times in the proofs that the embedding theorem imply the measure condition. It is an abstract version of an argument used in \cite{korobenkomr}.

\begin{lemma}
\label{iteration}
Suppose $0<a<b<\infty$, $0<p<q<\infty$  and $\rho,\tau\in (0,\infty)$. If a sequence $(a_j)_{j\in\bbbn}$ satisfies
\begin{equation}
\label{eq46}
a\leq a_j\leq b
\quad
\text{and}
\quad
a_{j+1}^{1/q}\leq\rho \tau^j a_j^{1/p}
\qquad
\forall\ j\in\bbbn,
\end{equation}
then
$$
a_1^{1-p/q}\,\rho^p\,\tau^{pq/(q-p)}\geq 1.
$$
\end{lemma}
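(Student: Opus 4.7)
\emph{Plan.} The strategy is to iterate the recursion, take logarithms, and send $j\to\infty$, using the two-sided bound $a\le a_j\le b$ to kill the boundary term. Raising the recursive inequality to the $q$-th power and then taking logs gives the linear recursion
$$c_{j+1}\le q\log\rho+jq\log\tau+r\,c_j,$$
where $c_j:=\log a_j$ and $r:=q/p>1$. (Logarithms are well-defined because $a_j\ge a>0$.) A straightforward induction on $j$ then yields
$$c_{j+1}\le q(\log\rho)\sum_{k=0}^{j-1}r^k+q(\log\tau)\sum_{k=0}^{j-1}(j-k)\,r^k+r^j c_1,\qquad\forall\,j\in\bbbn.$$

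Next, I would divide both sides by $r^j$ and pass to the limit $j\to\infty$. Because $a\le a_j\le b$, the sequence $(c_j)$ is uniformly bounded, so $r^{-j}c_{j+1}\to 0$. On the right-hand side, the geometric series identity gives $r^{-j}\sum_{k=0}^{j-1}r^k\to 1/(r-1)$, while reindexing via $m=j-k$ converts the arithmetic-geometric sum into a convergent tail: $r^{-j}\sum_{k=0}^{j-1}(j-k)r^k=\sum_{m=1}^{j}m\,r^{-m}\to\sum_{m=1}^{\infty}m\,r^{-m}=r/(r-1)^2$. Passing to the limit in the divided inequality produces
$$0\le\frac{q\log\rho}{r-1}+\frac{qr\log\tau}{(r-1)^2}+\log a_1.$$

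Finally, substituting $r=q/p$ (so $r-1=(q-p)/p$) and exponentiating, this becomes $a_1\,\rho^{qp/(q-p)}\,\tau^{q^2p/(q-p)^2}\ge 1$. Raising both sides to the power $(q-p)/q=1-p/q$ gives the desired estimate $a_1^{1-p/q}\,\rho^p\,\tau^{pq/(q-p)}\ge 1$.

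The main technical point is the bookkeeping of the iterated recursion, particularly pinning down the closed form of the arithmetic-geometric sum $\sum_{k=0}^{j-1}(j-k)r^k$ and evaluating its limit after normalization by $r^j$. The role of the hypothesis $a\le a_j\le b$ is more delicate than it appears: it is precisely the uniform boundedness of $c_j$ (equivalently, the positivity $a>0$ and finiteness $b<\infty$) that forces $r^{-j}c_{j+1}\to 0$ and makes the limiting inequality meaningful—without this control the iteration would allow the $r^j c_1$ term to dominate and the conclusion would fail.
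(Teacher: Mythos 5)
Your proof is correct and is essentially the paper's argument written in logarithmic form: setting $c_j=\log a_j$ and $r=q/p$ and dividing the iterated linear recursion by $r^j$ is exactly what the paper does multiplicatively by raising to the power $p\alpha^{j-1}$ (with $\alpha=p/q=1/r$) and studying $P_j=a_j^{\alpha^{j-1}}$. Both versions use the boundedness $a\le a_j\le b$ in the same decisive way, to force the normalized boundary term $r^{-j}c_{j+1}$ (equivalently $P_{j+1}$) to a trivial limit.
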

\begin{proof}
Let $\alpha:=p/q\in (0,1)$. Rise both sides of the second inequality in \eqref{eq46} to the power $p\alpha^{j-1}$. Then
$$
a_{j+1}^{\alpha^j}\leq\rho^{p\alpha^{j-1}}\tau^{pj\alpha^{j-1}}a_j^{\alpha^{j-1}}.
$$
With $P_j=a_j^{\alpha^{j-1}}$, the above inequality reads as
$$
P_{j+1}\leq\rho^{p\alpha^{j-1}}\tau^{pj\alpha^{j-1}} P_j,
$$
from which a simple induction argument and an observation that $P_1=a_1$ give
\begin{equation}
\label{eq47}
P_{j+1}\leq a_1\prod_{k=1}^j\left[\rho^{p\alpha^{k-1}}\tau^{pk\alpha^{k-1}}\right].
\end{equation}
Since $a_j\in [a,b]$ and $\alpha^{j-1}\to 0$ as $j\to\infty$, it follows that $\lim_{j\to\infty} P_j=1$. Therefore, passing to the limit in \eqref{eq47} as $j\to\infty$ gives
\begin{equation*}
\begin{split}
1&\leq
a_1\prod_{k=1}^\infty \left[\rho^{p\alpha^{k-1}}\tau^{pk\alpha^{k-1}}\right] 
=
a_1\rho^{p\sum_{k=1}^\infty \alpha^{k-1}}\tau^{p\sum_{k=1}^\infty k\alpha^{k-1}}\\
&=
a_1\rho^{p/(1-\alpha)}\tau^{p/(1-\alpha)^2}=
a_1\rho^{pq/(q-p)}\tau^{pq^2/(q-p)^2}
\end{split}
\end{equation*}
and the lemma easily follows.
\end{proof}

\section{The Case $p<s$}
\label{S4}

\begin{theorem}
\label{PlessS}
Fix $\sigma\in(1,\infty)$, $s\in(0,\infty)$, $p\in(0,s)$, and let $p^*=sp/(s-p)$.
Then the following statements are equivalent.
\begin{enumerate}[(a)]
\item There exists a constant $\kappa\in(0,\infty)$ such
that
\begin{equation}
\label{measboundthm-X}
\mu\big(B(x,r)\big)\geq \kappa\,r^s
\quad
\text{for every $x\in X$ and every finite $r\in\big(0,{\rm diam}(X)\big].$}
\end{equation}
\item There exists a constant $C_S\in(0,\infty)$ such that
for every ball $B_0:=B(x_0,R_0)$ with $x_0\in X$ and finite $R_0\in\big(0,{\rm diam}(X)\big]$,  one has
\begin{equation}
\label{eq-JK-X}
\left(\, \mvint_{B_0} |u|^{p^*}\, d\mu\right)^{\!\!\!1/p^*}\!\!\!\leq
C_S\bigg(\frac{\mu(\sigma B_0)}{R_0^s}\bigg)^{\!\!1/p}\!
\left[R_0\left(\,\mvint_{\sigma B_0} g^p\, d\mu\right)^{\!\!\!1/p}
\!\!+\left(\,\mvint_{\sigma B_0}|u|^p\, d\mu\right)^{\!\!\!1/p}\right],
\end{equation}
whenever $u\in M^{1,p}(\sigma B_0,d,\mu)$ and $g\in D(u)$.
\end{enumerate}

If, in addition, $(X,d)$ is assumed to be uniformly perfect $($cf.~\eqref{U-perf}$)$ then {\it (a)} $($hence, also {\it (b)}$)$ is further 
equivalent to:
\begin{enumerate}
\item[(c)] There exists a constant $C_P\in(0,\infty)$ such that
for every ball $B_0:=B(x_0,R_0)$ with $x_0\in X$ and  finite $R_0\in\big(0,{\rm diam}(X)\big]$, one has
\begin{equation}\label{HHs-175}
\inf_{\gamma\in\bbbr}\left(\, \mvint_{B_0} |u-\gamma|^{p^*}\, d\mu\right)^{\!\!\!1/p^*}\leq
C_P\bigg(\frac{\mu(\sigma B_0)}{R_0^s}\bigg)^{\!\!1/p}R_0\left(\,\mvint_{\sigma B_0} g^p\, d\mu\right)^{\!\!\!1/p},
\end{equation}
whenever $u\in M^{1,p}(\sigma B_0,d,\mu)$ and $g\in D(u)$.
\end{enumerate}
\end{theorem}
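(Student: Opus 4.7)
\noindent\emph{Proof plan.} The two forward directions are direct consequences of Theorem~\ref{embedding}: assuming \eqref{measboundthm-X}, the measure satisfies $V(\sigma B_0,s,\kappa)$ for every admissible $B_0$, so \eqref{eq18} gives \eqref{eq-JK-X} (the extra factor on the second term of \eqref{eq-JK-X} can be inserted by noting $(\mu(\sigma B_0)/R_0^s)^{1/p}\geq\kappa^{1/p}$, which lets one absorb the prefactor of $\mvint_{\sigma B_0}|u|^p\,d\mu$), while \eqref{eq19} gives \eqref{HHs-175} on the nose. The substance of the theorem is therefore the two converse implications, and the strategy for both is the same: feed carefully chosen bump functions into the hypothesized Sobolev estimate and run the abstract iteration of Lemma~\ref{iteration}.

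For \emph{(b) $\Rightarrow$ (a)}, fix $x\in X$ and a finite $r\in(0,\diam(X)]$, set $B_0:=B(x,r)$ with $R_0=r$, and apply Construction~\ref{Cons1} to obtain test functions $u_j$. Plugging $u_j$ into \eqref{eq-JK-X} and using $\int_{B_0}|u_j|^{p^*}\,d\mu\geq\mu(B^{j+1})$, $\int_{\sigma B_0}g_j^p\,d\mu=(2^{j+2}/r)^{p}\mu(B^j)$, and $\int_{\sigma B_0}|u_j|^p\,d\mu\leq\mu(B^j)$, the measure $\mu(\sigma B_0)$ cancels and the inequality collapses to
\[
\mu(B^{j+1})^{1/p^*}\leq C\,2^{j}\,r^{-s/p}\,\mu(B_0)^{1/p^*}\,\mu(B^{j})^{1/p}.
\]
Since $\mu\big(B(x,r/2)\big)\leq\mu(B^{j})\leq\mu(B_0)<\infty$, Lemma~\ref{iteration} applies with $q=p^*$, $\tau=2$, and $\rho=Cr^{-s/p}\mu(B_0)^{1/p^*}$. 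The arithmetic identities $1-p/p^*=p/s$ and $pp^*/(p^*-p)=s$ reduce the conclusion to $\mu(B^{1})^{p/s}\mu(B_0)^{p/p^*}\geq Cr^{s}$, and the crude bound $\mu(B^1)\leq\mu(B_0)$ together with $p/s+p/p^*=1$ yields $\mu(B_0)\geq Cr^s$. Notably, this implication does \emph{not} require uniform perfectness.

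For \emph{(c) $\Rightarrow$ (a)}, Lemma~\ref{en2-4}(i) reduces the task to proving \eqref{measboundthm-X} under the additional assumption $r\leq 3\varphi_x(r)/\lambda^2$; in particular $\varphi_x(r)>0$, which is exactly the hypothesis for Construction~\ref{Cons2}. That construction supplies bumps $\widetilde u_j$ with $\widetilde g_j=2^{j+2}\varphi_x(r)^{-1}\chi_{\widetilde B^j}$ enjoying the crucial geometric property \eqref{eq51}: for \emph{every} $\gamma\in\mathbb{R}$, the inequality $|\widetilde u_j-\gamma|\geq \tfrac12$ holds on a subset of $B_0$ of $\mu$-measure at least $\mu(\widetilde B^{j+1})$. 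Substituting $\widetilde u_j$ into \eqref{HHs-175} with $B_0=B(x,r)$ and computing as before yields
\[
\mu(\widetilde B^{j+1})^{1/p^*}\leq C\,2^{j}\,r^{1-s/p}\,\varphi_x(r)^{-1}\,\mu(B_0)^{1/p^*}\,\mu(\widetilde B^{j})^{1/p}.
\]
Lemma~\ref{iteration} then delivers $\mu(\widetilde B^{1})^{p/s}\mu(B_0)^{p/p^*}\geq Cr^{s-p}\varphi_x(r)^{p}$, and the standing assumption $\varphi_x(r)\geq \lambda^{2}r/3$ upgrades this to $\mu(B_0)\geq C\lambda^{2p}r^{s}$, completing the reduction.

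The hard step is \emph{(c) $\Rightarrow$ (a)}, because the right-hand side of \eqref{HHs-175} involves only the $L^p$-norm of $g$, so a naive bump concentrated near $x$ can be neutralized by choosing $\gamma$ equal to its plateau value, making the $\inf_\gamma$ on the left vanishingly small. Construction~\ref{Cons2} sidesteps this by calibrating the support of $\widetilde u_j$ to the scale $\varphi_x(r)$, at which the ball $B_0$ is quantitatively split into two pieces whose measures are each $\gtrsim\mu(B_0)$; this forces a contribution to the left-hand side irrespective of $\gamma$. Uniform perfectness, invoked through Lemma~\ref{en2-4}, is precisely what allows the reduction to the regime $r\lesssim\varphi_x(r)$ where the calibration is nondegenerate, and is the reason this hypothesis appears only in the (c)-part of the theorem.
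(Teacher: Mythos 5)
Your proposal is correct and follows essentially the same path as the paper: forward directions from Theorem~\ref{embedding} with the $V(\sigma B_0,s,\cdot)$ condition, $(b)\Rightarrow(a)$ via Construction~\ref{Cons1} and Lemma~\ref{iteration}, and $(c)\Rightarrow(a)$ via Lemma~\ref{en2-4}(i), Construction~\ref{Cons2}, the observation~\eqref{eq51}, and Lemma~\ref{iteration}. Your arithmetic (the identities $1-p/p^*=p/s$ and $pp^*/(p^*-p)=s$) and the absorption of the $\varphi_x(r)\gtrsim\lambda^2 r$ bound all match the paper's computations.
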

\begin{remark}
The implications {\it (a)} $\Longleftrightarrow$ {\it (b)} and {\it (a)} $\Rightarrow$  {\it (c)} will be proven without assuming that $(X,d)$ is uniformly perfect. Uniform perfectness will only be needed in the proof of the implication {\it (c)} $\Rightarrow$ {\it (a)}. 
\hfill$\blacksquare$
\end{remark}
\begin{remark}
As the proofs of the implications {\it (b)} $\Rightarrow$
{\it (a)} and {\it (c)} $\Rightarrow$
{\it (a)} will reveal, one can take the constants $\kappa$
in \eqref{measboundthm-X} to be $2^{-s}(8C_S)^{-p}$ and 
$2^{-s}(24C_P\lambda^{-2})^{-p}\lambda^s$, respectively. Here, $\lambda<1/5$ is the constant from the definition of the uniformly perfect space (see \eqref{U-perf}). Moreover, the implications {\it (b)} $\Rightarrow$
{\it (a)} and {\it (c)} $\Rightarrow$
{\it (a)} are also valid when $\sigma=1$.
\hfill$\blacksquare$
\end{remark}

\begin{proof}
We will first show that {\it (a)} implies both {\it (b)}
and {\it (c)} (without the additional assumption that $X$
is uniformly perfect). Fix a ball $B_0$ having finite radius $R_0\in\big(0,{\rm diam}(X)\big]$. 
If $B(x,r)\subseteq\sigma B_0$ and $r\in (0,\sigma R_0]$, then $\sigma^{-1}r\leq{\rm diam}(X)$, and 
\eqref{measboundthm-X} yields
\begin{equation}
\label{eq52}
\mu(B(x,r))\geq\mu(B(x,\sigma^{-1}r))\geq\kappa(\sigma^{-1}r)^s=c'r^s,
\end{equation}
where $c'=\kappa\sigma^{-s}$. Thus, $\mu$ satisfies the $V(\sigma B_0,s,c')$ condition. Since
$$
(c')^{-1/p}\left(\frac{\mu(\sigma B_0)}{R_0^s}\right)^{1/p}\geq
\left(\frac{c'(\sigma R_0)^s}{c'R_0^s}\right)^{1/p}=\sigma^{s/p}>1,
$$
inequality \eqref{eq18} with $b$ replaced by $c'$ yields
\begin{equation*}
\left(\, \mvint_{B_0} |u|^{p^*}\, d\mu\right)^{\!\!\!1/p^*}\leq 
C(c')^{-1/p}\bigg(\frac{\mu(\sigma B_0)}{R_0^s}\bigg)^{\!\!1/p}\!
\left[R_0\left(\,\mvint_{\sigma B_0} g^p\, d\mu\right)^{\!\!\!1/p}
\!\!+\left(\,\mvint_{\sigma B_0}|u|^p\, d\mu\right)^{\!\!\!1/p}\right].
\end{equation*}
Hence, {\it (b)} is valid. 
Given that \eqref{HHs-175} is an immediate consequence of \eqref{eq19}, this finishes the proof of the fact that {\it (a)} implies both {\it (b)} and {\it (c)}.

We now focus on proving that {\it (b)} implies {\it (a)} (still without assuming that the space is uniformly perfect).
To this end, fix a ball $B:=B(x,r)$ with $x\in X$ 
and $r\in\big(0,{\rm diam}(X)\big]$, finite. Specializing
\eqref{eq-JK-X} to the case when $B_0:=B$ (and simplifying the expression) implies that
\begin{equation}
\label{eq-JK2-X}
\left(\, \mvint_B |u|^{p^*}\, d\mu\right)^{\!\!\!1/p^*}\leq
C_Sr\left(\frac{1}{r^s}\,\int_{\sigma B} g^p\, d\mu\right)^{\!\!\!1/p} 
+ C_S\left(\frac{1}{r^s}\,\int_{\sigma B}|u|^p\, d\mu\right)^{\!\!\!1/p},
\end{equation}
holds whenever $u\in M^{1,p}(\sigma B,d,\mu)$ and $g\in D(u)$. 

Let $r_j$, $u_j$, $g_j$, and $B^j$, $j\in\bbbn$ be as in Construction~\ref{Cons1}.
Since $u_j\in M^{1,p}(\sigma B,d,\mu)$, 
the functions $u_j$ and $g_j$ satisfy \eqref{eq-JK2-X}.
Observe that for each $j\in\mathbb{N}$, we have
(keeping in mind that $\sigma>1$)
\begin{equation}\label{HD-1-X}
C_Sr\left(\frac{1}{r^s}\,\int_{\sigma B} g_j^p\, d\mu\right)^{\!\!\!1/p}
= \frac{C_S 2^{j+2}}{r^{s/p}}\,\mu(B^j)^{1/p}
\end{equation}
and
\begin{equation}\label{HD-2-X}
C_S\left(\frac{1}{r^{s}}\,\int_{\sigma B}|u_j|^p\, d\mu\right)^{\!\!\!1/p}
\leq \frac{C_S}{r^{s/p}}\,\mu(B^j)^{1/p}.
\end{equation}
Moreover, since $u_j\equiv1$ on $B^{j+1}$ we may estimate
\begin{equation}\label{HD-3-X}
\left(\, \mvint_{B} |u_j|^{p^*}\, d\mu\right)^{\!\!\!1/p^*}\geq\left(\frac{\mu(B^{j+1})}{\mu( B)}\right)^{\!\!1/p^*}.
\end{equation}
In concert, \eqref{HD-1-X}-\eqref{HD-3-X} and \eqref{eq-JK2-X}, give
\begin{equation*}
\left(\frac{\mu(B^{j+1})}{\mu(B)}\right)^{\!\!1/p^*}
\leq 
C_S\bigg(\frac{2^{j+2}}{r^{s/p}}+\frac{1}{r^{s/p}}\bigg)\mu(B^j)^{1/p}
\leq 
\frac{C_S2^{j+3}}{r^{s/p}}\,\mu(B^j)^{1/p},
\quad\forall\,j\in\mathbb{N}.
\end{equation*}
Therefore,
\begin{equation*}
\mu(B^{j+1})^{1/p^*}
\leq
\left(\frac{8C_S\mu(B)^{1/p^*}}{r^{s/p}}\right) 2^j\mu(B^j)^{1/p}.
\quad\forall\,j\in\mathbb{N}.
\end{equation*}
Since $0<\mu(\frac{1}{2}B)\leq\mu(B^j)\leq\mu(B)<\infty$, $j\in\mathbb{N}$, Lemma~\ref{iteration} applied to
$$
a_j:=\mu(B^j),\quad 
p:=p,\quad 
q:=p^*,\quad
\rho:=\frac{8C_S\mu(B)^{1/p^*}}{r^{s/p}}
\quad
\text{and}
\quad\tau:=2,
$$
yields
$$
1\leq \mu(B)^{1-p/p^*}\left(\frac{8C_S\mu(B)^{1/p^*}}{r^{s/p}}\right)^p 2^{pp^*/(p^*-p)}\leq
\mu(B)(8C_S)^p r^{-s} 2^s
$$
and hence $\mu(B)\geq 2^{-s}(8C_S)^{-p} r^s$. Thus,
\eqref{measboundthm-X} holds with 
$\kappa:=2^{-s}(8C_S)^{-p}$. 
Given that $\kappa\in(0,\infty)$ is independent of
the ball $B$, this finishes the proof of the
implication {\it (b)} $\Rightarrow$ {\it (a)}.

There remains to prove that {\it (c)} implies {\it (a)}
under the additional assumption that $(X,d)$ is uniformly perfect. To this end,  fix $x\in X$ and a finite radius $r\in\big(0,{\rm diam}(X)\big]$. Let $B:=B(x,r)$. 
Also, let $\lambda\in(0,1)$ be as in \eqref{U-perf} and recall that there is no loss in generality in assuming 
that $\lambda<1/5$. As such, if we appeal to part {\it (i)} in Lemma~\ref{en2-4},
then it suffices to only consider the case when $r\leq 3\varphi_x(r)/\lambda^2$. 
It is crucial for the argument below that $r\leq 3\varphi_x(r)/\lambda^2$, because it implies that $\widetilde{r}_j>\lambda^2 r/6$ so the balls $\widetilde{B}^j$ defined below have radii comparable to that of $B$. This estimate is used in \eqref{Gkw-924} and in fact, it is the only place where we use it; that allows to mimic the argument used in the proof of the implication {\it (b)} $\Rightarrow$ {\it (a)}.

Define $\widetilde{r}_j$, $\widetilde{u}_j$, $\widetilde{g}_j$, and $\widetilde{B}^j$, $j\in\bbbn$ as in Construction~\ref{Cons2}.
Since $\widetilde{u}_j\in M^{1,p}(\sigma B,d,\mu)$, the functions $\widetilde{u}_j$ and $\widetilde{g}_j$ satisfy \eqref{HHs-175} (used here with $B_0:=B$), which after a simplification gives
\begin{equation}\label{HHs-1752}
\inf_{\gamma\in\bbbr}\left(\, \mvint_{B} |\widetilde{u}_j-\gamma|^{p^*}\, d\mu\right)^{\!\!\!1/p^*}\leq
C_Pr\left(\frac{1}{r^{s}}\,\int_{\sigma B} (\widetilde{g}_j)^p\, d\mu\right)^{\!\!\!1/p},\qquad\forall\,j\in\mathbb{N}.
\end{equation}
Observe that for each $j\in\mathbb{N}$, we have
(keeping in mind $\sigma> 1$ and $r\leq 3\varphi_x(r)/\lambda^2$)
\begin{equation}
\label{Gkw-924}
C_Pr\left(\frac{1}{r^s}\,\int_{\sigma B}(\widetilde{g}_j)^p\,d\mu\right)^{\!\!\!1/p} 
=\frac{2^{j+2}C_Pr}{\varphi_x(r)}\,\bigg(\frac{\mu\big(\widetilde{B}^j\big)}{r^{s}}\bigg)^{\!\!1/p}
\leq \frac{3\cdot 2^{j+2}C_P}{\lambda^2 \,r^{s/p}}\,\mu\big(\widetilde{B}^j\big)^{1/p}.
\end{equation}
Combining \eqref{HHs-1752} and \eqref{Gkw-924} with \eqref{eq51} gives 
\begin{equation*}
\frac{1}{2}\bigg(\frac{\mu\big(\widetilde{B}^{j+1}\big)}{\mu(B)}\bigg)^{\!\!1/p^*}
\leq 2^{j}\,\frac{12C_P}{\lambda^2\,r^{s/p}}\,\mu\big(\widetilde{B}^j\big)^{1/p}.
\end{equation*}
Hence,
\begin{equation*}
\mu\big(\widetilde{B}^{j+1}\big)^{1/p^*}
\leq 
\left(\frac{24C_P\,\mu(B)^{1/p^*}}{\lambda^2 r^{s/p}}\right)\,2^{j}\,\mu\big(\widetilde{B}^j\big)^{1/p},\qquad\forall\,j\in\mathbb{N}.
\end{equation*}
Since
$$
0<\mu\big(\mbox{$\frac{1}{2}$}B(x,\varphi_x(r))\big)\leq\mu\big(\widetilde{B}^j\big)\leq \mu(B)<\infty,
$$
Lemma~\ref{iteration} applied to
$$
a_j:=\mu\big(\widetilde{B}^j\big),\ 
p:=p,\
q:=p^*,\
\rho:=\left(\frac{24C_P\,\mu(B)^{1/p^*}}{\lambda^2 r^{s/p}}\right)
\quad
\text{and}
\quad
\tau:=2,
$$
yields
$$
1\leq \mu\big(\widetilde{B}^1\big)^{1-p/p^*}
\left(\frac{24C_P\,\mu(B)^{1/p^*}}{\lambda^2 r^{s/p}}\right)^p 2^{pp^*/(p^*-p)}
\leq \mu(B)\,(24 C_P\lambda^{-2})^{p} r^{-s}\, 2^s
$$
so
$$
\mu(B)\geq 2^{-s}(24 C_P\lambda^{-2})^{-p} r^s.
$$
This inequality was proved under the assumption that $r\leq 3\varphi_x(r)/\lambda^2$ and hence part {\it (i)} of Lemma~\ref{en2-4} implies that
$$
\mu(B(x,r))\geq 2^{-s}(24 C_P\lambda^2)^{-p}\lambda^s r^s
$$
whenever $r\in (0,\diam(X)]$ is finite.
This finishes the proof of the theorem.
\end{proof}

\begin{theorem}
\label{EMT}
Fix $\sigma\in(1,\infty)$, $s\in(0,\infty)$, $p\in(0,s)$, and let $p^*=sp/(s-p)$.
Then the following statements are equivalent.
\begin{enumerate}[(a)]
\item There is a constant $\kappa\in (0,\infty)$ such that
\begin{equation}
\label{measboundthm}
\frac{\mu\big(B(x,r)\big)}{\mu\big(B(y,R)\big)}\geq \kappa\bigg(\frac{r}{R}\bigg)^{\!\!s},
\end{equation}
whenever $x,y\in X$, $B(x,r)\subseteq B(y,R)$, $0<r\leq R<\infty$.
\item There exists a constant $C_S\in(0,\infty)$ such that
for every ball $B_0:=B(x_0,R_0)$ with $x_0\in X$ and $R_0\in(0,\infty)$, one has
\begin{equation}
\label{eq-JK}
\left(\, \mvint_{B_0} |u|^{p^*}\, d\mu\right)^{\!\!\!1/p^*}\leq
C_SR_0\left(\, \mvint_{\sigma B_0} g^p\, d\mu\right)^{\!\!\!1/p} 
+ C_S\left(\, \mvint_{\sigma B_0}|u|^p\, d\mu\right)^{\!\!\!1/p},
\end{equation}
whenever $u\in M^{1,p}(\sigma B_0,d,\mu)$ and $g\in D(u)$.
\end{enumerate}
If, in addition, $(X,d)$ is assumed to be uniformly perfect $($cf.~\eqref{U-perf}$)$ then {\it (a)} $($hence, also {\it (b)}$)$ is further 
equivalent to:
\begin{enumerate}
\item[(c)] There exists a constant $C_P\in(0,\infty)$ such that
for every ball $B_0:=B(x_0,R_0)$ with $x_0\in X$ and $R_0\in(0,\infty)$, one has
\begin{equation}
\label{eq-JK-D}
\inf_{\gamma\in\bbbr}\left(\, \mvint_{B_0} |u-\gamma|^{p^*}\, d\mu\right)^{\!\!\!1/p^*}\leq
C_P R_0\left(\, \mvint_{\sigma B_0} g^p\, d\mu\right)^{\!\!\!1/p},
\end{equation}
whenever $u\in M^{1,p}(\sigma B_0,d,\mu)$ and $g\in D(u)$.
\end{enumerate}
\end{theorem}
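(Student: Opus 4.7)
My plan is to follow the scheme used in the proof of Theorem~\ref{PlessS}, exploiting Theorem~\ref{embedding} for one direction and the test-function Constructions~\ref{Cons1} and \ref{Cons2} for the other. The key observation is that the quantitative doubling condition \eqref{measboundthm} is, in effect, the $V(\sigma B_0,s,b)$ condition with $b$ scaling like $\mu(\sigma B_0)/(\sigma R_0)^s$, which makes the factor $(\mu(\sigma B_0)/(bR_0^s))^{1/p}$ appearing in Theorem~\ref{embedding} collapse to a constant.

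For \textit{(a)} $\Rightarrow$ \textit{(b)} and \textit{(a)} $\Rightarrow$ \textit{(c)}, I would fix an arbitrary ball $B_0=B(x_0,R_0)$ and apply \eqref{measboundthm} with $y=x_0$ and $R=\sigma R_0$; this gives the $V(\sigma B_0,s,b)$ condition with $b:=\kappa\,\mu(\sigma B_0)/(\sigma R_0)^s$, so that $\mu(\sigma B_0)/(bR_0^s)=\sigma^s/\kappa$. Plugging this into estimates \eqref{eq18} and \eqref{eq19} of Theorem~\ref{embedding} yields \eqref{eq-JK} and \eqref{eq-JK-D} with $C_S$ and $C_P$ depending only on $p,s,\sigma,\kappa$. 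Uniform perfectness is not used here.

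For \textit{(b)} $\Rightarrow$ \textit{(a)}, I would fix $B(x,r)\subseteq B(y,R)$ with $0<r\leq R<\infty$ and apply \eqref{eq-JK} to the \emph{outer} ball $B_0:=B(y,R)$ with the test functions $u_j,g_j$ from Construction~\ref{Cons1} applied to the \emph{inner} ball $B(x,r)$. Since each $u_j$ is supported in $B^j\subseteq B(x,r)\subseteq B_0\subseteq\sigma B_0$, the left-hand side of \eqref{eq-JK} is bounded below by $(\mu(B^{j+1})/\mu(B_0))^{1/p^*}$, while the right-hand side is dominated by $C_S\cdot 2^{j+3}(R/r)\,\mu(B^j)^{1/p}/\mu(\sigma B_0)^{1/p}$, where the inequality $R/r\geq 1$ is used to absorb the $|u_j|^p$ term into the $g_j^p$ term. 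The resulting recursion
\begin{equation*}
\mu(B^{j+1})^{1/p^*}\leq 8C_S\cdot 2^j\,(R/r)\,\frac{\mu(B_0)^{1/p^*}}{\mu(\sigma B_0)^{1/p}}\,\mu(B^j)^{1/p}
\end{equation*}
has exactly the form required by Lemma~\ref{iteration} with $q=p^*$ and $\tau=2$. Using the identities $pq/(q-p)=s$ and $1-p/q=p/s$, the iteration collapses (after absorbing $\mu(B(y,R))\leq\mu(\sigma B(y,R))$) to $\mu(B(y,R))\leq C\,(R/r)^s\,\mu(B(x,r))$, which is precisely \eqref{measboundthm}.

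For \textit{(c)} $\Rightarrow$ \textit{(a)}, I would follow the same blueprint with two modifications. First, I would invoke Lemma~\ref{en2-4}(ii) to reduce to proving \eqref{measboundthm} only under the extra assumption $r\leq 3\varphi_x(r)/\lambda^2$. Second, I would replace Construction~\ref{Cons1} by Construction~\ref{Cons2}, which is where uniform perfectness genuinely enters. The left-hand side of \eqref{eq-JK-D} is then bounded below via \eqref{eq51}, which gives $|\widetilde{u}_j-\gamma|\geq 1/2$ on a subset of $B_0$ of measure at least $\mu(\widetilde{B}^{j+1})$, uniformly in $\gamma$. The right-hand side is treated as before, but now the constraint $r\leq 3\varphi_x(r)/\lambda^2$ gives $\varphi_x(r)\geq\lambda^2 r/3$, so that $R/\varphi_x(r)\leq 3R/(\lambda^2 r)$; this yields a recursion identical in form to Step~2 up to a factor of $\lambda^{-2}$. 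Lemma~\ref{iteration} closes the restricted case, and Lemma~\ref{en2-4}(ii) then removes the restriction.

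\textbf{Main obstacle.} The main hurdle is \textit{(c)} $\Rightarrow$ \textit{(a)}. The difficulty is that Construction~\ref{Cons2} produces bumps whose natural geometric scale is $\varphi_x(r)$ rather than $r$, so without the reduction to $r\leq 3\varphi_x(r)/\lambda^2$ furnished by Lemma~\ref{en2-4}(ii) one cannot convert $R/\varphi_x(r)$ into $R/r$ up to a dimensional constant, and the iteration scheme would output the wrong power of $R/r$. This technical interplay between uniform perfectness, the auxiliary function $\varphi_x$, and Lemma~\ref{iteration} is the crux of the argument.
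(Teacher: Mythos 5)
Your proposal is correct and follows essentially the same approach as the paper's proof: for (a)\,$\Rightarrow$\,(b),(c) you identify the $V(\sigma B_0,s,b)$ condition with $b=\kappa\mu(\sigma B_0)(\sigma R_0)^{-s}$ and then invoke Theorem~\ref{embedding}, and for (b),(c)\,$\Rightarrow$\,(a) you use Constructions~\ref{Cons1} and~\ref{Cons2} applied to the inner ball while reading the embedding inequality on the outer ball $B_0=B(y,R)$, closing via Lemma~\ref{iteration} and, in the case of (c), first reducing to $r\leq 3\varphi_x(r)/\lambda^2$ by Lemma~\ref{en2-4}(ii). The identities $pp^*/(p^*-p)=s$ and $1-p/p^*=p/s$ and the absorption $\mu(B_0)\le\mu(\sigma B_0)$ are exactly the algebra the paper uses to collapse the iteration to the quantitative doubling inequality.
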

\begin{remark}
As the proof of Theorem~\ref{EMT} will reveal, the
implication {\it (a)} $\Rightarrow$ {\it (c)} holds
in metric measure spaces which are not necessarily 
uniformly perfect. Moreover, the implications {\it (b)} $\Rightarrow$
{\it (a)} and {\it (c)} $\Rightarrow$
{\it (a)} are also valid when $\sigma=1$.
\end{remark}

\begin{proof}
We begin proving the implication {\it (a)} implies both
{\it (b)} and {\it (c)}. 
Fix any ball $B_0:=B(x_0,R_0)$ and let $B(y,R)=\sigma B_0$. Then inequality 
\eqref{measboundthm} implies that the measure $\mu$ satisfies the $V(\sigma B_0,s,b)$ condition with 
$b=\kappa\mu(\sigma B_0)(\sigma R_0)^{-s}$.
As such, for this value of $b$ the inequalities displayed in \eqref{eq-JK} and \eqref{eq-JK-D} follow immediately from 
\eqref{eq18}-\eqref{eq19} in Theorem~\ref{embedding}.
Note that these implications are valid without the additional uniformly perfect property.

We prove next that {\it (a)} follows from {\it (b)}.
Fix a ball $B_0:=B(y,R)$ so inequality \eqref{eq-JK} gives
\begin{equation}
\label{eq-JK2}
\left(\, \mvint_{B_0} |u|^{p^*}\, d\mu\right)^{\!\!\!1/p^*}\leq
C_S R\left(\, \mvint_{\sigma B_0} g^p\, d\mu\right)^{\!\!\!1/p} 
+ C_S\left(\, \mvint_{\sigma B_0}|u|^p\, d\mu\right)^{\!\!\!1/p},
\end{equation}
whenever $u\in M^{1,p}(\sigma B_0,d,\mu)$ and $g\in D(u)$.

Moving on, suppose 
$B:=B(x,r)\subseteq B_0$, $r\in (0,R]$.
For each $j\in\mathbb{N}$, 
define $r_j$, $u_j$, $g_j$, and $B^j$ as in Construction~\ref{Cons1}.
Since $u_j\in M^{1,p}(\sigma B_0,d,\mu)$,
for each $j\in\bbbn$ we have
\begin{equation}
\label{HD-1}
C_S R\left(\, \mvint_{\sigma B_0} g_j^p\, d\mu\right)^{\!\!\!1/p}
=\frac{C_S R 2^{j+2}}{r}\left(\frac{\mu(B^j)}{\mu(\sigma B_0)}\right)^{\!\!1/p},
\end{equation}
and
\begin{equation}
\label{HD-2}
C_S\left(\, \mvint_{\sigma B_0}|u_j|^p\, d\mu\right)^{\!\!\!1/p}
\leq C_S\left(\frac{\mu(B^j)}{\mu(\sigma B_0)}\right)^{\!\!1/p}.
\end{equation}
Moreover, 
\begin{equation}
\label{HD-3}
\left(\, \mvint_{B_0} |u_j|^{p^*}\, d\mu\right)^{\!\!\!1/p^*}
\geq\left(\frac{\mu(B^{j+1})}{\mu(B_0)}\right)^{\!\!1/p^*}.
\end{equation}
In concert, \eqref{HD-1}-\eqref{HD-3}, and \eqref{eq-JK2}, 
give
$$
\left(\frac{\mu(B^{j+1})}{\mu(B_0)}\right)^{\!\!1/p^*}
\leq 
C_S\bigg(\frac{R\, 2^{j+2}}{r}+1\bigg)
\left(\frac{\mu(B^j)}{\mu(\sigma B_0)}\right)^{\!\!1/p}
\leq
\frac{C_S\, R\, 2^{j+3}}{r}\left(\frac{\mu(B^j)}{\mu(B_0)}\right)^{\!\!1/p},
$$
where the last inequality follows from the observation that $r\leq R$ and $\mu(B_0)\leq\mu(\sigma B_0)$.
Therefore,
$$
\mu(B^{j+1})^{1/p^*}
\leq
\left(\frac{8 C_SR}{r\mu(B_0)^{1/s}}\right)\,2^j\,\mu(B^j)^{1/p},
\quad\forall\,j\in\mathbb{N}.
$$
Applying Lemma~\ref{iteration} with
$$
a_j:=\mu(B^j),\quad 
p:=p,\quad 
q:=p^*,\quad 
\rho:=\frac{8\,C_S\, R}{r\mu(B_0)^{1/s}}
\quad
\text{and}
\quad
\tau:=2,
$$
and remembering that $a_1=\mu(B^1)\leq\mu(B)$ yields
$$
1\leq \mu(B)^{1-p/p^*}\left(\frac{8\,C_S\, R}{r\mu(B_0)^{1/s}}\right)^p\, 2^{pp^*/(p^*-p)}=
\left(\frac{\mu(B)}{\mu(B_0)}\right)^{p/s}(8C_S)^p\left(\frac{R}{r}\right)^p\, 2^s.
$$
Therefore,
$$
\frac{\mu(B(x,r))}{\mu(B(y,R))}\geq
(8C_S)^{-s}\, 2^{-s^2/p}\, \left(\frac{r}{R}\right)^s,
$$
whenever $B(x,r)\subseteq B(y,R)$, $r\in (0,R]$. This completes the proof of \eqref{measboundthm} with $\kappa =(8C_S)^{-s}\, 2^{-s^2/p}$.

There remains to prove that {\it (c)} implies {\it (a)}
under the additional assumption that $(X,d)$ is uniformly perfect. 
Recall that we may assume that $0<\lambda<1/5$, where $\lambda$ as as in \eqref{U-perf}.
Let $x,y\in X$, $B:=B(x,r)\subseteq B(y,R)$, $0<r\leq R<\infty$. In light of part {\it (ii)} of Lemma~\ref{en2-4}, we may assume that $r\leq 3\varphi_x(r)/\lambda^2$. 
Let $\widetilde{r}_j$, $\widetilde{u}_j$, $\widetilde{g}_j$, and $\widetilde{B}^j$, $j\in\bbbn$ be as in Construction~\ref{Cons2}.
Then, the functions $\widetilde{u}_j$ and $\widetilde{g}_j$ satisfy \eqref{eq-JK-D} (used here with $B_0:=B(y,R)$).
Observe that for each $j\in\mathbb{N}$, we have
(keeping in mind that $\sigma>1$)
\begin{equation}
\label{Gkw-1}
C_P R\left(\,\mvint_{\sigma B_0} \widetilde{g_j}^p\, d\mu\right)^{1/p}=
\frac{C_P\, R\, 2^{j+2}}{\varphi_x(r)}\left(\frac{\mu(\widetilde{B}^j)}{\mu(\sigma B_0)}\right)^{1/p}\leq
\frac{3\, C_P\, R\, 2^{j+2}}{\lambda^2r}
\left(\frac{\mu(\widetilde{B}^j)}{\mu(B_0)}\right)^{1/p}.
\end{equation}
Now \eqref{Gkw-1} combined with \eqref{eq-JK-D} and \eqref{eq51} yields
$$
\frac{1}{2}\left(\frac{\mu(\widetilde{B}^{j+1})}{\mu(B_0)}\right)^{1/p^*}
\leq\frac{3\, C_P\, R\, 2^{j+2}}{\lambda^2 r}
\left(\frac{\mu(\widetilde{B}^{j})}{\mu(B_0)}\right)^{1/p}.
$$
Therefore,
$$
\mu(\widetilde{B}^{j+1})^{1/p^*}
\leq \left(\frac{24\, C_P\, R}{\lambda^2 r\,\mu(B_0)^{1/s}}\right)\, 2^j\, \mu(\widetilde{B}^j).
$$
Applying Lemma~\ref{iteration} with
$$
a_j:=\mu(\widetilde{B}^j),\quad
p:=p,\quad
q:=p^*,\quad
\rho:=\frac{24\, C_P\, R}{\lambda^2\, r\, \mu(B_0)^{1/s}}
\quad
\text{and}
\quad
\tau:=2,
$$
and keeping in mind that
$a_1=\mu(\widetilde{B}^1)\leq \mu(B)=\mu(B(x,r))$, yields
$$
1\leq\mu(B)^{1-p/p^*}\,
\left(\frac{24\, C_P\, R}{\lambda^2\, r\, \mu(B_0)^{1/s}}\right)^p\,
2^{pp^*/(p^*-p)}=
\left(\frac{\mu(B)}{\mu(B_0)}\right)^{p/s}
\left(\frac{R}{r}\right)^p
\left(\frac{24\, C_P}{\lambda^2}\right)^p\, 2^s.
$$
Therefore,
$$
\frac{\mu(B(x,r))}{\mu(B(y,R))}\geq
\left(\frac{\lambda^2}{24\, C_P}\right)^s\, 2^{-s^2/p}\,
\left(\frac{r}{R}\right)^s.
$$
This finishes the proof of the theorem.
\end{proof}

In the next two results we investigate the relationship between the lower measure bound
in \eqref{eq22} and global Sobolev and Sobolev-Poincar\'e inequalities, i.e., estimates of the form
\begin{equation}
\label{yvx-X1}
\left(\, \int_{X} |u|^{p^*}\, d\mu\right)^{\!\!\!1/p^*}
\leq C_S\left(\, \int_{X} g^{p}\, d\mu\right)^{\!\!\!1/p}+C_S\left(\, \int_{X} |u|^{p}\, d\mu\right)^{\!\!\!1/p},
\end{equation}
and
\begin{equation*}
\inf_{\gamma\in\bbbr}\left(\, \int_{X} |u-\gamma|^{p^*}\, d\mu\right)^{\!\!\!1/p^*}\leq
C_P\left(\, \int_{X} g^p\, d\mu\right)^{\!\!\!1/p},
\end{equation*}
where $u\in M^{1,p}(X,d,\mu)$ and $g\in D(u)$. It was shown in \cite{gorka} that if the
measure $\mu$ is doubling then the Sobolev embedding in \eqref{yvx-X1} implies the measure
$\mu$ satisfies the lower bound in \eqref{eq22}. In Theorem~\ref{GlobalEmbedd1}, we prove
that the assumption $\mu$ is doubling is not necessary.

\begin{theorem}\label{GlobalEmbedd1}
Suppose $(X,d,\mu)$ is a metric measure space. Fix $s,p\in(0,\infty)$ such that
$p<s$ and set $p^*:=sp/(s-p)$. Then the following statements are valid.
\begin{enumerate}[(a)]
\item If there exists a finite constant $C_S>0$ satisfying
\begin{equation*}
\left(\, \int_{X} |u|^{p^*}\, d\mu\right)^{\!\!\!1/p^*}
\leq C_S\left(\, \int_{X} g^{p}\, d\mu\right)^{\!\!\!1/p}+C_S\left(\, \int_{X} |u|^{p}\, d\mu\right)^{\!\!\!1/p},
\end{equation*}
whenever $u\in M^{1,p}(X,d,\mu)$ and $g\in D(u)$, then 
there exists a finite constant $\kappa>0$ such that
\begin{equation*}
\mu\big(B(x,r)\big)\geq \kappa\,r^s\quad\mbox{for every $x\in X$ and for every\, $r\in(0,1]$.}
\end{equation*}

\item If the metric space $(X,d)$ is uniformly perfect and there exists a finite
constant $C_P>0$ satisfying
\begin{equation}
\label{yvx-1}
\inf_{\gamma\in\bbbr}\left(\, \int_{X} |u-\gamma|^{p^*}\, d\mu\right)^{\!\!\!1/p^*}\leq
C_P\left(\, \int_{X} g^p\, d\mu\right)^{\!\!\!1/p},
\end{equation}
whenever $u\in M^{1,p}(X,d,\mu)$ and $g\in D(u)$, then there exists a finite constant $\kappa>0$ such that
\begin{equation*}
\mu\big(B(x,r)\big)\geq \kappa\,r^s\quad\mbox{for every $x\in X$ and for every finite\, $r\in\big(0,{\rm diam}(X)\big]$.}
\end{equation*}
\end{enumerate}
\end{theorem}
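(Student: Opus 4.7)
The plan is to adapt the testing-by-bump-functions scheme used in Theorem~\ref{PlessS} and Theorem~\ref{EMT}: insert carefully constructed Lipschitz cutoffs into the hypothesized inequality, read off a recursive estimate for the $\mu$-measures of a shrinking family of balls, and feed the recursion into the abstract iteration Lemma~\ref{iteration}. The new feature here, compared with the earlier two theorems, is that the right-hand sides of \emph{(a)} and \emph{(b)} are now \emph{global} $L^p$-norms over $X$, with no $\mu(\sigma B_0)$-factor available for renormalisation; therefore one must choose bump functions that are compactly supported in the target ball so that the $L^p$-norms reduce to $\mu$-volumes of that ball.

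For part \emph{(a)}, fix $x\in X$, $r\in(0,1]$, and set $B:=B(x,r)$. I would apply Construction~\ref{Cons1} to produce $u_j\in M^{1,p}(X,d,\mu)$ with $u_j\equiv 1$ on $B^{j+1}$, $\mathrm{supp}(u_j)\subseteq B^j\subseteq\tfrac{3}{4}B$, and $g_j=2^{j+2}r^{-1}\chi_{B^j}\in D(u_j)$. Plugging $u_j,g_j$ into the assumed Sobolev inequality and using $|u_j|\leq\chi_{B^j}$ yields
\[
\mu(B^{j+1})^{1/p^*}\leq C_S\bigl(2^{j+2}r^{-1}+1\bigr)\,\mu(B^j)^{1/p}.
\]
The restriction $r\leq 1$ is precisely what is needed so that $2^{j+2}r^{-1}\geq 1$ for every $j\in\bbbn$; this lets me absorb the ``$+1$'' into the $2^{j+2}r^{-1}$ term and rewrite the bound as $\mu(B^{j+1})^{1/p^*}\leq(8C_S/r)\,2^j\,\mu(B^j)^{1/p}$. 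Since $0<\mu(\tfrac12 B)\leq\mu(B^j)\leq\mu(B)<\infty$, Lemma~\ref{iteration} applied with $q=p^*$, $\rho=8C_S/r$, $\tau=2$ delivers $\mu(B)\geq\kappa\,r^s$ with $\kappa=(8C_S)^{-s}\,2^{-s^2/p}$.

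For part \emph{(b)}, fix $x\in X$ and a finite $r\in(0,\diam(X)]$. By Lemma~\ref{en2-4}\emph{(i)} it is enough to handle radii with $r\leq 3\varphi_x(r)/\lambda^2$, where $\lambda\in(0,1/5)$ is the uniform-perfectness constant; the general case then follows. With $B:=B(x,r)$, apply Construction~\ref{Cons2} to obtain Lipschitz $\widetilde{u}_j\in M^{1,p}(X,d,\mu)$ with $\widetilde{u}_j\equiv 1$ on $\widetilde{B}^{j+1}$, $\mathrm{supp}(\widetilde{u}_j)\subseteq\widetilde{B}^j\subseteq B(x,\varphi_x(r))$, and $\widetilde{g}_j=2^{j+2}\varphi_x(r)^{-1}\chi_{\widetilde{B}^j}$. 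The crucial input, already recorded in \eqref{eq51}, is that for every $\gamma\in\bbbr$ at least one of $\widetilde{B}^{j+1}$ (with $\mu$-measure at most $\mu(B)/2$) or $X\setminus\widetilde{B}^j\supseteq B\setminus B(x,\varphi_x(r))$ (with $\mu$-measure at least $\mu(B)/2\geq\mu(\widetilde{B}^{j+1})$) carries $|\widetilde{u}_j-\gamma|\geq 1/2$. Testing \eqref{yvx-1} against $\widetilde{u}_j,\widetilde{g}_j$ and using $\varphi_x(r)\geq\lambda^2 r/3$ yields
\[
\mu(\widetilde{B}^{j+1})^{1/p^*}\leq\frac{24\,C_P}{\lambda^2\,r}\,2^j\,\mu(\widetilde{B}^j)^{1/p},
\]
after which Lemma~\ref{iteration} furnishes $\mu(B)\geq c(C_P,\lambda,s,p)\,r^s$.

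The main technical obstacle I anticipate is part \emph{(b)}, specifically ensuring that the oscillation of $\widetilde{u}_j$ contributes a quantity genuinely comparable to $\mu(\widetilde{B}^{j+1})^{1/p^*}$ when the $L^{p^*}$-norm is taken over all of $X$ rather than over the fixed ball $B$. This is delicate because $X$ may be much larger than $B$; uniform perfectness, through Construction~\ref{Cons2} and the reduction to $r\leq 3\varphi_x(r)/\lambda^2$, is precisely the tool that keeps the inner balls $\widetilde{B}^{j+1}$ comparable in measure to $B$ from above so that they drive the lower oscillation bound. The corresponding subtlety in \emph{(a)}---the forced restriction $r\leq 1$ needed to dominate the $L^p$-norm of $u$ by the gradient term---is comparatively mild.
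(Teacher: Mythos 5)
Your proof is correct and follows exactly the route the paper itself indicates: the authors explicitly leave the details to the reader by pointing back to the proofs of the implications \emph{(b)} $\Rightarrow$ \emph{(a)} and \emph{(c)} $\Rightarrow$ \emph{(a)} in Theorem~\ref{PlessS}, and your argument is precisely that adaptation, with the correct handling of how the restriction $r\leq 1$ in part \emph{(a)} and the reduction via Lemma~\ref{en2-4}\emph{(i)} in part \emph{(b)} replace the $\mu(\sigma B_0)/R_0^s$ normalisation. One small remark: the closing ``technical obstacle'' you flag for part \emph{(b)} is in fact not an obstacle---since $\int_X|\widetilde{u}_j-\gamma|^{p^*}\,d\mu\geq\int_B|\widetilde{u}_j-\gamma|^{p^*}\,d\mu$ trivially, the lower bound from \eqref{eq51} transfers immediately to the global integral; the genuine role of uniform perfectness here is only in the reduction to radii with $r\leq 3\varphi_x(r)/\lambda^2$, not in driving the oscillation bound.
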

\begin{remark}
As a consequence, the global Sobolev-Poincar\'e inequality in \eqref{yvx-1} implies the local Sobolev-Poincar\'e inequality in \eqref{HHs-175} of Theorem~\ref{PlessS}. In fact, within the class of $M^{1,p}$-spaces on uniformly perfect metric measure spaces, the global Sobolev-Poincar\'e inequality in \eqref{yvx-1} implies all of the local estimates in \eqref{hdx-1}-\eqref{hdx-3} as well as the global H\"older condition in \eqref{hdx-4}.
\end{remark}

The proofs of parts {\em (a)} and {\em (b)} are similar to the proofs of the implications 
{\em (b)} $\Rightarrow$ {\em (a)} and {\em (c)} $\Rightarrow$ {\em (a)} in Theorem~\ref{PlessS}, respectively. We leave details to the reader.

\begin{corollary}\label{GlobalEmbeddCor}
Let $(X,d,\mu)$ be a bounded uniformly perfect metric measure space, 
and fix $s,p\in(0,\infty)$ such that $p<s$. Then with $p^*:=sp/(s-p)$, the following statements are equivalent. 
\begin{enumerate}[(a)]
\item There exists a finite constant $\kappa>0$ such that
\begin{equation*}
\mu\big(B(x,r)\big)\geq\kappa\,r^s\,\,\,\mbox{ for every }\,x\in X
\mbox{ and every finite }r\in\big(0,{\rm diam}(X)\big].
\end{equation*}

\item One has $M^{1,p}(X,d,\mu)\subseteq L^{p^*}(X,\mu)$ and 
there exists a finite constant $C_S>0$ satisfying
\begin{equation}
\label{GBLS}
\|u\|_{L^{p^*}(X,\mu)}\leq C_S\|u\|_{M^{1,p}(X,d,\mu)},\quad\forall\,u\in M^{1,p}(X,d,\mu).
\end{equation}

\item There exists a finite constant $C_P>0$ satisfying
\begin{equation}\label{GBLP}
\inf_{\gamma\in\bbbr}\left(\, \int_{X} |u-\gamma|^{p^*}\, d\mu\right)^{\!\!\!1/p^*}\leq
C_P \left(\, \int_{X} g^p\, d\mu\right)^{\!\!\!1/p},
\end{equation}
whenever $u\in M^{1,p}(X,d,\mu)$ and $g\in D(u)$.
\end{enumerate}

Consequently, in the context of bounded uniformly perfect metric measure spaces,
the global estimates in \eqref{GBLS}-\eqref{GBLP} are equivalent to the local estimates
in \eqref{hdx-1}-\eqref{hdx-3} as well as the global H\"older condition in \eqref{hdx-4}.
\end{corollary}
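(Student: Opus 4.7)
The overall plan is to prove the three-way equivalence (a) $\Leftrightarrow$ (b) $\Leftrightarrow$ (c) by combining the local embedding theorem (Theorem~\ref{embedding}) with the already-established global converse (Theorem~\ref{GlobalEmbedd1}). The key structural observation is that because $\diam(X)<\infty$, any ball of radius strictly greater than $\diam(X)$ engulfs all of $X$; this lets me convert the local averaged estimates \eqref{eq18}--\eqref{eq19} into genuine global $L^p$-inequalities on $X$.

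For the implications (a) $\Rightarrow$ (b) and (a) $\Rightarrow$ (c), I would fix an arbitrary $x_0\in X$ and set $R_0:=2\diam(X)$, so that $B_0:=B(x_0,R_0)$ and $\sigma B_0$ both coincide with $X$. Hypothesis (a) then lets me verify the $V(\sigma B_0,s,b)$ condition of Theorem~\ref{embedding} with $b:=\kappa(2\sigma)^{-s}$: for $B(x,r)\subseteq X$ with $r\in(0,\diam(X)]$ one has $\mu(B(x,r))\geq\kappa r^s$ by \eqref{measboundthm-X}, while for $r\in(\diam(X),2\sigma\diam(X)]$ the monotone inclusion $B(x,r)\supseteq B(x,\diam(X))$ gives $\mu(B(x,r))\geq\kappa\diam(X)^s\geq b\,r^s$. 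With the $V$-condition in hand, applying \eqref{eq18} to $B_0=X$ and absorbing the $\mu(X)$-factors into the constant yields \eqref{GBLS}, while applying \eqref{eq19} analogously yields \eqref{GBLP}.

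For the implication (b) $\Rightarrow$ (a), the inequality \eqref{GBLS} is exactly the hypothesis of Theorem~\ref{GlobalEmbedd1}(a), so I obtain the lower bound $\mu(B(x,r))\geq\kappa_0\,r^s$ on the restricted range $r\in(0,1]$. To upgrade this to $r\in(0,\diam(X)]$ I use monotonicity: for $r\in(1,\diam(X)]$ one has $\mu(B(x,r))\geq\mu(B(x,1))\geq\kappa_0\geq\kappa_0\diam(X)^{-s}\,r^s$, so the choice $\kappa:=\kappa_0\min\{1,\diam(X)^{-s}\}$ works uniformly on the full range. The implication (c) $\Rightarrow$ (a) is then immediate from Theorem~\ref{GlobalEmbedd1}(b), since \eqref{GBLP} is exactly the hypothesis \eqref{yvx-1} of that result and $X$ is uniformly perfect by assumption.

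The only genuinely delicate step is the $V$-condition verification in the (a) $\Rightarrow$ (b)/(c) direction: the radius $R_0=2\diam(X)$ falls outside the range $(0,\diam(X)]$ on which (a) supplies a bound, and it is precisely the boundedness of $X$ that rescues the argument by allowing us to dominate $B(x,r)$ from below by $B(x,\diam(X))$ when $r>\diam(X)$. Once the equivalence (a) $\Leftrightarrow$ (b) $\Leftrightarrow$ (c) is in place, the concluding sentence of the corollary---that \eqref{GBLS}--\eqref{GBLP} are also equivalent to the local estimates \eqref{hdx-1}--\eqref{hdx-3} and the H\"older condition \eqref{hdx-4}---follows at once by chaining with Theorem~\ref{LMeasINT}.
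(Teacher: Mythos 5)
Your proposal is correct and follows essentially the same route as the paper: the converse directions are delegated to Theorem~\ref{GlobalEmbedd1}, with the $r\in(0,1]$ bound upgraded to $r\in\big(0,\diam(X)\big]$ by monotonicity and rescaling, and the forward direction uses boundedness of $X$ to verify the $V(\sigma B_0,s,b)$ condition for a ball $B_0$ with $B_0=\sigma B_0=X$ before invoking \eqref{eq18}--\eqref{eq19} of Theorem~\ref{embedding}. The only cosmetic difference is that the paper fixes $\sigma=2$ while you keep $\sigma$ generic; both are valid since both balls already engulf $X$.
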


\begin{proof}
It is clear from Theorem~\ref{GlobalEmbedd1} that {\em (c)} implies {\em (a)}, and that {\em (b)} implies that
that there exists $\kappa\in(0,\infty)$ such that
\begin{equation*}
\mu\big(B(x,r)\big)\geq \kappa\,r^s \quad \text{for every $x\in X$.
and every $r\in(0,1]$.}
\end{equation*}
If however,
$1<r\leq{\rm diam}(X)<\infty$, then $[{\rm diam}(X)]^{-1}r\leq1$, which implies
$$\mu\big(B(x,r)\big)\geq \mu\big(B\big(x,[{\rm diam}(X)]^{-1}r\big)\big)\geq\kappa[{\rm diam}(X)]^{-s}\,r^s.$$
Hence, the statement in {\it (a)} holds.

On the other hand, given that ${\rm diam}(X)<\infty$, we can
find a constant  $\widetilde{\kappa}\in(0,\infty)$ (depending only on $\kappa$, $s$, 
and the space $X$) so that $\mu$ satisfies the $V(2B_0,s,\widetilde{\kappa})$ condition (cf.~\eqref{measbound}), where $B_0=X$ is any ball of radius $R_0:=2{\rm diam}(X)$.
Consequently, \eqref{GBLS} and \eqref{GBLP} now follow immediately from \eqref{eq18} and \eqref{eq19} in Theorem~\ref{embedding}. This finishes the proof of the corollary.
\end{proof}

\section{The Case $p=s$}
\label{S5}

\begin{theorem}\label{AS-U-X}
Suppose that $(X,d,\mu)$ is a uniformly perfect measure
metric space. Then for each fixed $s\in(0,\infty)$ and
$\sigma\in(1,\infty)$,
the following two statements are equivalent.
\begin{enumerate}[(a)]
\item There exists a constant $\kappa\in (0,\infty)$ such that
\begin{equation}
\label{Gy-21-X}
\mu\big(B(x,r)\big)\geq \kappa\,r^s
\quad
\text{for every $x\in X$ and every finite $r\in\big(0,{\rm diam}(X)\big].$}
\end{equation}

\item There exist constants $C_1,C_2,\gamma\in(0,\infty)$ such that
\begin{equation}\label{Iue.5-X}
\mvint_{B_0} {\rm exp}\bigg(C_1\frac{|u-u_{B_0}|}{\|g\|_{L^{s}(\sigma B_0,\mu)}}\bigg)^\gamma\,d\mu\leq C_2,
\end{equation}
whenever $B_0\subseteq X$ is a ball with radius at
most ${\rm diam}(X)$, $u\in M^{1,s}(\sigma B_0,d,\mu)$ and $g\in D_+(u)$.
\end{enumerate}
\end{theorem}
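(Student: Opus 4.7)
My plan is to adapt the scheme of Theorem~\ref{PlessS}, with the main new ingredient being the conversion of the exponential integrability bound in \eqref{Iue.5-X} into a polynomial Poincar\'e-type estimate.

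For (a) $\Rightarrow$ (b), I observe that if \eqref{Gy-21-X} holds then, for any ball $B_0$ of radius $R_0 \in (0,\diam(X)]$, every sub-ball $B(x,r) \subseteq \sigma B_0$ with $r \in (0, \sigma R_0]$ satisfies $\mu(B(x,r)) \geq \kappa \sigma^{-s} r^s$; when $r \leq \diam(X)$ this comes from $B(x,r) \supseteq B(x, r/\sigma)$ and \eqref{Gy-21-X}, while when $r > \diam(X)$ it follows from $B(x,r) = X$ together with $\mu(X) \geq \kappa \diam(X)^s \geq \kappa\sigma^{-s}r^s$. Thus $\mu$ satisfies the $V(\sigma B_0, s, \kappa\sigma^{-s})$ condition, and \eqref{Iue.5-X} (with $\gamma := 1$) is precisely the content of Theorem~\ref{embedding}(b).

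For the converse (b) $\Rightarrow$ (a), fix a ball $B := B(x,r)$ with $r \in (0,\diam(X)]$ finite. By Lemma~\ref{en2-4}(i), I may assume additionally that $r \leq 3 \varphi_x(r)/\lambda^2$, where $\lambda \in (0, 1/5)$ is as in \eqref{U-perf}; in this regime $\varphi_x(r) \geq \lambda^2 r / 3$. From the pointwise bound $e^{X^\gamma} \geq X^{n\gamma}/n!$ valid for $X \geq 0$ and any $n \in \mathbb{N}$, inequality \eqref{Iue.5-X} implies the polynomial Poincar\'e-type bound
$$
\left(\, \mvint_{B_0} |u - u_{B_0}|^{n\gamma}\, d\mu\right)^{\!\!\!1/(n\gamma)} \leq \frac{(C_2\, n!)^{1/(n\gamma)}}{C_1}\, \|g\|_{L^s(\sigma B_0,\mu)}
$$
for every ball $B_0$ of radius at most $\diam(X)$, every $u \in M^{1,s}(\sigma B_0, d, \mu)$, and every $g \in D_+(u)$. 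I fix once and for all an integer $n$ with $n\gamma > s$.

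I then apply this estimate to the test functions $\tilde u_j$, $\tilde g_j$ from Construction~\ref{Cons2} (indexed by $j \in \mathbb{N}$) associated with $B$. Invoking the lower bound \eqref{eq51} with the choice $\gamma := (\tilde u_j)_B$ on the left, together with the identity $\|\tilde g_j\|_{L^s(\sigma B,\mu)} = 2^{j+2}\varphi_x(r)^{-1}\mu(\widetilde{B}^j)^{1/s}$ on the right, I obtain
$$
\mu(\widetilde{B}^{j+1})^{1/(n\gamma)} \leq \frac{C\,\mu(B)^{1/(n\gamma)}}{\varphi_x(r)}\, 2^j\, \mu(\widetilde{B}^j)^{1/s},\qquad j\in\mathbb{N},
$$
for some constant $C$ depending only on $C_1, C_2, \gamma, n$. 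Since every open ball of positive radius in a metric-measure space has positive finite measure, one has $0 < \mu(B(x,\varphi_x(r)/2)) \leq \mu(\widetilde{B}^j) \leq \mu(B) < \infty$ for each $j$, so Lemma~\ref{iteration} applies with $p := s$, $q := n\gamma > s$, $\tau := 2$, and $\rho := C\,\mu(B)^{1/(n\gamma)}/\varphi_x(r)$. Combining the resulting bound with $\mu(\widetilde{B}^1) \leq \mu(B)$ yields $\mu(B) \geq \kappa_0\, \varphi_x(r)^s \geq \kappa_0 (\lambda^2/3)^s\, r^s$ for an explicit constant $\kappa_0$, and a final application of Lemma~\ref{en2-4}(i) extends this to a lower bound of the form \eqref{Gy-21-X} for \emph{all} finite $r \in (0,\diam(X)]$.

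The main technical obstacle I anticipate is the very first step of the converse: extracting a clean polynomial Poincar\'e inequality with $L^s$ upper gradient from an exponential-integrability assumption in a form suitable for iteration. Once this conversion is carried out, the rest mirrors the $p<s$ framework of Theorem~\ref{PlessS}(c) $\Rightarrow$ (a) essentially verbatim, with the role of the Sobolev--Poincar\'e exponent $p^\ast$ played by $n\gamma$.
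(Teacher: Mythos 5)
Your proposal is correct and follows the same overall scheme as the paper's proof: the forward direction invokes Theorem~\ref{embedding}(b) after verifying the local volume condition, and the converse reduces to the case $r\leq 3\varphi_x(r)/\lambda^2$ via Lemma~\ref{en2-4}(i), applies Construction~\ref{Cons2} and \eqref{eq51}, and closes with Lemma~\ref{iteration}. The only local variation is in converting the exponential bound into a polynomial iteration estimate: you lower-bound $e^{t^\gamma}\geq t^{n\gamma}/n!$ to get an $L^{n\gamma}$ Poincar\'e-type inequality, whereas the paper first plugs in the test functions, takes logarithms, and uses $\log y\leq q\,y^{1/q}$ with $q=2s/\gamma$; both yield the same iteration structure with $q>s$, so the difference is cosmetic.
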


\begin{remark}
The implication {\it (a)} $\Rightarrow$ {\it (b)} holds in any metric-measure space, not necessarily uniformly perfect. 
\hfill$\blacksquare$
\end{remark}

\begin{proof}
Fix $s\in(0,\infty)$ and $\sigma\in(1,\infty)$. 
For the implication {\it (a)} $\Rightarrow$ {\it (b)}, observe that if $B_0$ is a ball having finite radius $R_0\in\big(0,{\rm diam}(X)\big]$, then \eqref{Gy-21-X} implies that $\mu$ satisfies the
$V(\sigma B_0,s,b)$ condition
with $b:=\kappa\sigma^{-s}\in(0,\infty)$, see \eqref{eq52}.
As such, the desired conclusion now follows from part {\it (b)} in Theorem~\ref{embedding} with $\gamma=1$.

Regarding the opposite implication, suppose that
\eqref{Iue.5-X} holds for some $C_1,C_2,\gamma\in(0,\infty)$, 
and fix $x\in X$, $r\in\big(0,{\rm diam}(X)\big]$, finite. 
Let $B=B(x,r)$.
According to part {\it (i)} of Lemma~\ref{en2-4}, it suffices to prove \eqref{Gy-21-X} under the additional assumption that $r\leq 3\varphi_x(r)/\lambda^2$, where $0<\lambda<1/5$. Then 
$0<\varphi_x(r)<r$ and we define $\widetilde{r}_j$, $\widetilde{u}_j$, $\widetilde{g}_j$, and $\widetilde{B}^j$, $j\in\bbbn$ as in Construction~\ref{Cons2}.
In particular, we have that $\widetilde{u}_j$ and $\widetilde{g}_j$ satisfy \eqref{Iue.5-X} (used here with $B_0:=B$), i.e.,
\begin{equation}
\label{UEn.11}
\mvint_{B} {\rm exp}\bigg(C_1\frac{|\widetilde{u}_j-(\widetilde{u}_j)_{B}|}{\|\widetilde{g}_j\|_{L^{s}(\sigma B,\mu)}}\bigg)^\gamma\,d\mu\leq C_2,
\qquad\forall\,j\in\mathbb{N}.
\end{equation}
Since $\sigma>1$ and $r\leq 3\varphi_x(r)/\lambda^2$, we have
\begin{equation}
\label{yre-1}
\|\widetilde{g}_j\|_{L^s(\sigma B,\mu)}=\frac{2^{j+2}}{\varphi_x(r)}\,\mu(\widetilde{B}^j)^{1/s}
\leq 3\,\frac{2^{j+2}}{\lambda^2\,r}\,\mu(\widetilde{B}^j)^{1/s}.
\end{equation}
Now \eqref{eq51},  \eqref{UEn.11} and  \eqref{yre-1} give
\begin{align}
\label{Iue.5-2-X}
\frac{\mu(\widetilde{B}^{j+1})}{\mu(B)}\,
{\rm exp}\bigg(\frac{C_1\lambda^2\,r}{24\cdot 2^{j}\,\mu(\widetilde{B}^{j})^{1/s}}\bigg)^\gamma\leq C_2.
\end{align}
Without loss of generality we can assume that $C_2>1$. Then it
follows from \eqref{Iue.5-2-X} that
\begin{align*}
\frac{C_1\lambda^2\,r}{24\cdot 2^{j}\,\mu(\widetilde{B}^{j})^{1/s}}
&\leq\bigg[\log\bigg(C_2\frac{\mu(B)}{\mu(\widetilde{B}^{j+1})}\bigg)\bigg]^{1/\gamma}
\leq (2s\gamma^{-1})^{1/\gamma}\,C_2^{1/(2s)}\bigg(\frac{\mu(B)}{\mu(\widetilde{B}^{j+1})}\bigg)
^{1/(2s)},
\end{align*}
where the last inequality
follows from the estimate
$\log(y)\leq q\,y^{1/q}$, which holds true for all $y,q\in(0,\infty)$
(applied here with $q=2s\gamma^{-1}$).
Therefore,
\begin{equation*}
\mu(\widetilde{B}^{j+1})^{1/(2s)}\leq 
\left(\frac{24(2s\gamma^{-1})^{1/\gamma}\,C_2^{1/(2s)}\mu(B)^{1/(2s)}}{C_1\lambda^2\,r}\right)\,2^{j}\,\mu(\widetilde{B}^{j})^{1/s}.
\end{equation*}
Now applying Lemma~\ref{iteration} with
$$
a_j:=\mu\big(\widetilde{B}^j\big),\quad
p:=s,\quad
q:=2s,\quad
\rho:=\frac{24(2s\gamma^{-1})^{1/\gamma}\,C_2^{1/(2s)}\mu(B)^{1/(2s)}}{C_1\lambda^2\,r},
\quad
\text{and}
\quad
\tau:=2,
$$
we get
$$
1\leq
\mu\big(\widetilde{B}^1\big)^{1-\frac{s}{2s}}\,
\left(\frac{24(2s\gamma^{-1})^{1/\gamma}\,C_2^{1/(2s)}\mu(B)^{1/(2s)}}{C_1\lambda^2\,r}\right)^s\,
2^{\frac{s\cdot 2s}{2s-s}}
\leq 
\mu(B) 
\frac{24^s(2s\gamma^{-1})^{s/\gamma}\,C_2^{1/2}}{C_1^s\lambda^{2s}\,r^s}\, 4^s
$$
so
$$
\mu(B(x,r))\geq
\bigg(\frac{C_1^s\,\lambda^{2s}}{96^s(2s\gamma^{-1})^{s/\gamma}\sqrt{C_2}}\bigg)
\,r^s.
$$
The proof is complete.
\end{proof}

In the case of doubling measures we have the following
characterization which is a consequence of 
Theorem~\ref{embedding} and Theorem~\ref{AS-U-BDD2}, below.

\begin{theorem}\label{AS-U-BDD}
Suppose that $(X,d,\mu)$ is a uniformly perfect measure
metric space and fix $\sigma\in(1,\infty)$. Then the following two statements are equivalent.
\begin{enumerate}[(a)]
\item The measure $\mu$ is doubling.
\vskip.08in

\item There exist constants $C_1,C_2,s,\gamma\in(0,\infty)$ such that
\begin{equation}
\label{eq53}
\mvint_{B_0} {\rm exp}\bigg(C_1\frac{\mu(\sigma B_0)^{1/s}|u-u_{B_0}|}{R_0\|g\|_{L^s(\sigma B_0,\mu)}}\bigg)^\gamma\,d\mu\leq C_2,
\end{equation}
whenever $B_0\subseteq X$ is a ball, $u\in M^{1,s}(\sigma B_0,d,\mu)$ and $g\in D_+(u)$. 
\end{enumerate}
\end{theorem}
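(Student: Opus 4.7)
The plan is to reduce (a) $\Rightarrow$ (b) to the exponential estimate of Theorem~\ref{embedding}(b), and to establish (b) $\Rightarrow$ (a) by the bump-and-iterate strategy already employed in the proofs of Theorems~\ref{PlessS}, \ref{EMT}, and \ref{AS-U-X}: test the hypothesis against the functions from Construction~\ref{Cons2} and invoke the abstract iteration Lemma~\ref{iteration}. For the forward direction, the quantitative doubling property \eqref{Doub-2} provides constants $s,\kappa\in(0,\infty)$ so that, for any ball $B_0=B(x_0,R_0)$, the condition $V(\sigma B_0,s,b)$ from \eqref{measbound} holds with $b:=\kappa\sigma^{-s}\mu(\sigma B_0)R_0^{-s}$. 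Inserting this value of $b$ into Theorem~\ref{embedding}(b) and observing that $b^{1/s}$ is a fixed constant multiple of $\mu(\sigma B_0)^{1/s}/R_0$ yields \eqref{eq53} with $\gamma=1$.

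For the reverse direction, fix $B(x,r)\subseteq B(y,R)$ with $0<r\leq R<\infty$ and set $B=B(x,r)$, $B_0=B(y,R)$; the goal is to prove a quantitative doubling estimate $\mu(B)/\mu(B_0)\geq\kappa'(r/R)^{s'}$, which by \eqref{Doub-2} is equivalent to $\mu$ being doubling. By part (ii) of Lemma~\ref{en2-4}, it suffices to treat the case $r\leq 3\varphi_x(r)/\lambda^2$ with $0<\lambda<1/5$ as in \eqref{U-perf}. Construction~\ref{Cons2} then produces bump functions $\widetilde{u}_j,\widetilde{g}_j$ supported in nested balls $\widetilde{B}^j\subseteq B$ with $\widetilde{u}_j\equiv 1$ on $\widetilde{B}^{j+1}$ and $\|\widetilde{g}_j\|_{L^s(\sigma B_0)}\leq 3\cdot 2^{j+2}(\lambda^2 r)^{-1}\mu(\widetilde{B}^j)^{1/s}$. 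If $\mu(B)/\mu(B_0)>1/2$ the quantitative doubling estimate is trivial; otherwise, since $\widetilde{u}_j\in[0,1]$ is supported in $\widetilde{B}^j\subseteq B$, we have $(\widetilde{u}_j)_{B_0}\leq\mu(B)/\mu(B_0)\leq 1/2$, whence $|\widetilde{u}_j-(\widetilde{u}_j)_{B_0}|\geq 1/2$ pointwise on $\widetilde{B}^{j+1}$. Testing \eqref{eq53} against $(\widetilde{u}_j,\widetilde{g}_j)$, taking logarithms, and applying the elementary inequality $\log y\leq (2s/\gamma)\,y^{\gamma/(2s)}$ produces the recursion
\[
\mu(\widetilde{B}^{j+1})^{1/(2s)}\leq \rho\,2^j\,\mu(\widetilde{B}^j)^{1/s},\qquad\forall\,j\in\bbbn,
\]
where $\rho$ is a constant multiple of $R\mu(B_0)^{1/(2s)}/(r\mu(\sigma B_0)^{1/s})$. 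Applying Lemma~\ref{iteration} with $p=s$, $q=2s$, $\tau=2$, and using $\mu(\widetilde{B}^1)\leq\mu(B)$ together with $\mu(\sigma B_0)\geq\mu(B_0)$, then yields $\mu(B)/\mu(B_0)\geq\kappa'(r/R)^{2s}$; Lemma~\ref{en2-4}(ii) removes the restriction $r\leq 3\varphi_x(r)/\lambda^2$ and completes the proof.

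The main obstacle is that the exponential hypothesis \eqref{eq53} subtracts the specific average $u_{B_0}$ rather than an arbitrary constant, so the pointwise lower bound $|\widetilde{u}_j-\gamma|\geq 1/2$ on $\widetilde{B}^{j+1}$ valid for every $\gamma\leq 1/2$ cannot be invoked unconditionally as in Theorems~\ref{PlessS} and \ref{AS-U-X}. The dichotomy $\mu(B)/\mu(B_0)\leq 1/2$ versus $>1/2$ is exactly what is needed to control the size of $(\widetilde{u}_j)_{B_0}$, since $\widetilde{u}_j$ is supported in a ball whose $\mu$-measure is (in the nontrivial case) at most half of $\mu(B_0)$. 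A minor accounting point is that the resulting doubling exponent is $2s$ rather than $s$; this is harmless because the quantitative characterization \eqref{Doub-2} of doubling admits any exponent above $\log_2 C_\mu$.
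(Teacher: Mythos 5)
Your proof is correct and follows the same general strategy as the paper: the forward direction via Theorem~\ref{embedding}(b) with $b=\kappa\mu(\sigma B_0)(\sigma R_0)^{-s}$, and the reverse direction by testing the bump functions of Construction~\ref{Cons2} and invoking Lemma~\ref{iteration}. Two remarks on the comparison. First, the ``main obstacle'' you identify---that the hypothesis subtracts $u_{B_0}$ rather than a free constant---is not actually an obstacle in the paper's setup, because the estimate~\eqref{eq51} in Construction~\ref{Cons2} is stated for \emph{arbitrary} $\gamma\in\bbbr$: since $\widetilde u_j\equiv 1$ on $\widetilde B^{j+1}$ and $\widetilde u_j\equiv 0$ on $B\setminus\widetilde B^j$, and both of these sets have $\mu$-measure at least $\mu(\widetilde B^{j+1})$ inside $B$, one always has $|\widetilde u_j-\gamma|\geq\tfrac12$ on a subset of $B$ of measure at least $\mu(\widetilde B^{j+1})$, whatever $\gamma$ is (in particular for $\gamma=(\widetilde u_j)_{B_0}$). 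Your dichotomy $\mu(B)/\mu(B_0)\leq\tfrac12$ versus $>\tfrac12$ gives a correct workaround, but it is redundant; the paper's argument in Theorem~\ref{AS-U-BDD2} simply applies \eqref{eq51} directly with no case split. Second, the paper derives the reverse implication from the quantitatively sharper Theorem~\ref{AS-U-BDD2}, which for every $\varepsilon>0$ produces a quantitative doubling bound with exponent $s+\varepsilon$ (by running your iteration with $q=\beta s$ for arbitrary $\beta>1$ rather than fixing $\beta=2$). Your choice $q=2s$ yields exponent $2s$, which is weaker but, as you note, still suffices to conclude that $\mu$ is doubling, so the theorem itself is proved.
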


For the implication {\it (a)} $\Rightarrow$ {\it (b)} in Theorem~\ref{AS-U-BDD}, one can take $s:=\log_2(C_\mu)$ where
\begin{equation*}
C_\mu:=\sup_{x\in X,\, r\in(0,\infty)}\frac{\mu(B(x,2r))}{\mu(B(x,r))}\in(1,\infty)
\end{equation*}
is the doubling constant for $\mu$, see \eqref{Doub-2}. Then $\mu$ satisfies the $V(\sigma B_0,s,b)$ condition with 
$b=\kappa\mu(\sigma B_0)(\sigma R_0)^{-s}$ and \eqref{eq53} follows from Theorem~\ref{embedding}.

\begin{theorem}\label{AS-U-BDD2}
Let $(X,d,\mu)$ be a uniformly perfect measure
metric space and suppose that there exist 
$s,C_1,C_2,\gamma\in(0,\infty)$ and $\sigma\in[1,\infty)$ such that
\begin{equation}\label{Iue.5-52}
\mvint_{B_0} {\rm exp}\bigg(C_1\frac{\mu(\sigma B_0)^{1/s}|u-u_{B_0}|}{R_0\|g\|_{L^s(\sigma B_0,\mu)}}\bigg)^\gamma\,d\mu\leq C_2,
\end{equation}
whenever $B_0\subseteq X$ is a ball, $u\in M^{1,s}(\sigma B_0,d,\mu)$ and $g\in D_+(u)$.

Then for every $\varepsilon\in(0,\infty)$, there exists a constant $\kappa\in(0,\infty)$ such
that 
\begin{equation}\label{measboundthm-52}
\frac{\mu\big(B(x,r)\big)}{\mu\big(B(y,R)\big)}\geq \kappa\bigg(\frac{r}{R}\bigg)^{s+\varepsilon},
\end{equation}
whenever $x,y\in X$, $B(x,r)\subseteq B(y,R)$, $0<r\leq R<\infty$. In particular, the measure $\mu$ is doubling.
\end{theorem}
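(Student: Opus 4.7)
The plan is to adapt the proof of Theorem~\ref{AS-U-X} by applying the exponential inequality~\eqref{Iue.5-52} with $B_0$ chosen to be the larger ball $B(y,R)$ rather than a ball centered at $x$. First I would fix $\varepsilon\in(0,\infty)$, let $\lambda\in(0,1/5)$ be as in~\eqref{U-perf}, and take $x,y\in X$ with $B(x,r)\subseteq B(y,R)$ and $0<r\leq R<\infty$. Using part~{\it (ii)} of Lemma~\ref{en2-4} with $s$ replaced by $s+\varepsilon$, I may reduce to the case $r\leq 3\varphi_x(r)/\lambda^2$; the trivial scenario $r>{\rm diam}(X)$ forces $B(x,r)=B(y,R)=X$ and may be disposed of beforehand. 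Under this reduction, $\varphi_x(r)>0$ and Construction~\ref{Cons2} yields Lipschitz bumps $\widetilde{u}_j\in M^{1,s}(\sigma B(y,R),d,\mu)$ together with $\widetilde{g}_j:=2^{j+2}\varphi_x(r)^{-1}\chi_{\widetilde{B}^j}\in D_+(\widetilde{u}_j)$, where $\widetilde{B}^j:=B(x,\widetilde{r}_j)$ and $\widetilde{r}_j\in(\varphi_x(r)/2,\,3\varphi_x(r)/4]$.

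Next I would apply~\eqref{Iue.5-52} to $u:=\widetilde{u}_j$ with $B_0:=B(y,R)$. The dichotomy underlying~\eqref{eq51} still functions in this enlarged ball: for any $\gamma\in\mathbb{R}$, one of the sets $\widetilde{B}^{j+1}$ or $B(y,R)\setminus\widetilde{B}^j$ is contained in $\{\,|\widetilde{u}_j-\gamma|\geq\tfrac{1}{2}\,\}$, and since $B(y,R)\setminus\widetilde{B}^j\supseteq B(x,r)\setminus\widetilde{B}^j$ has $\mu$-measure at least $\mu(B(x,r))/2\geq\mu(\widetilde{B}^{j+1})$, this set has $\mu$-measure at least $\mu(\widetilde{B}^{j+1})$. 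Selecting $\gamma:=(\widetilde{u}_j)_{B(y,R)}$ and bounding $\|\widetilde{g}_j\|_{L^s(\sigma B(y,R))}\leq(3\cdot 2^{j+2})/(\lambda^2 r)\cdot\mu(\widetilde{B}^j)^{1/s}$ via $r\leq 3\varphi_x(r)/\lambda^2$ would produce
\begin{equation*}
\mu(\widetilde{B}^{j+1})\,{\rm exp}\!\left(\frac{C_1\lambda^2 r\,\mu(\sigma B(y,R))^{1/s}}{24\cdot 2^j R\,\mu(\widetilde{B}^j)^{1/s}}\right)^{\!\gamma}\leq C_2\,\mu(B(y,R)),\qquad\forall\,j\in\mathbb{N}.
\end{equation*}

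Taking logarithms and applying the elementary estimate $\log(z)\leq Q z^{1/Q}$ with the judicious choice $Q:=s(s+\varepsilon)/(\varepsilon\gamma)$ converts this exponential bound into a polynomial recursion
\begin{equation*}
\mu(\widetilde{B}^{j+1})^{1/Q}\leq \rho\cdot 2^{j\gamma}\,\mu(\widetilde{B}^j)^{\gamma/s},\qquad \rho:=Q\,(C_2\mu(B(y,R)))^{1/Q}\bigg(\frac{24\,R}{C_1\lambda^2 r}\bigg)^{\!\gamma}\mu(\sigma B(y,R))^{-\gamma/s}.
\end{equation*}
Because the sequence $\mu(\widetilde{B}^j)$ is sandwiched between $\mu(B(x,\varphi_x(r)/2))>0$ and $\mu(B(x,r))<\infty$, I can then invoke Lemma~\ref{iteration} with $p:=s/\gamma$, $q:=Q$, $\tau:=2^\gamma$ to extract a lower bound on $\mu(\widetilde{B}^1)\leq\mu(B(x,r))$. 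The choice of $Q$ is tailored so that $s\gamma Q/(\gamma Q-s)=s+\varepsilon$, giving the desired exponent on $r/R$; simultaneously, the powers of $\mu(\sigma B(y,R))$ and $\mu(B(y,R))^{-1}$ should coalesce into a factor $(\mu(\sigma B(y,R))/\mu(B(y,R)))^{1/\beta}\geq 1$, where $\beta:=(\gamma Q-s)/(\gamma Q)\in(0,1)$, which can then be discarded to arrive at~\eqref{measboundthm-52}. The final assertion that $\mu$ is doubling follows by specializing to $y=x$ and $R=2r$.

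I expect the principal obstacle to be the scale mismatch between the ball $B(y,R)$ used in~\eqref{Iue.5-52} and the natural scale $r\leq R$ of the bump functions: the factors $r/R$ and $\mu(\sigma B(y,R))^{1/s}/\mu(\widetilde{B}^j)^{1/s}$ both weaken the exponential estimate, forcing a reliance on the polynomial surrogate $\log(z)\leq Qz^{1/Q}$. This device keeps the iteration alive but exacts an $\varepsilon$-loss in the exponent: the required $Q$ diverges as $\varepsilon\downarrow 0$, and so does the constant $\kappa$, which is precisely why the clean exponent $s$ from Theorem~\ref{AS-U-X} cannot be recovered in the comparison setting. The most delicate bookkeeping lies in verifying the algebraic cancellation that leaves behind only the harmless quotient $\mu(\sigma B(y,R))/\mu(B(y,R))\geq 1$ at the end of the calculation.
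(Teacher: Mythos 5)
Your argument is correct and is essentially the paper's proof, with two purely cosmetic differences. First, the paper parametrizes through an auxiliary exponent $\beta>1$, proving $\mu(B(x,r))/\mu(B(y,R))\geq\kappa_\beta(r/R)^{\beta s/(\beta-1)}$ and then noting $\beta s/(\beta-1)=s+\varepsilon$; you instead set $Q=s(s+\varepsilon)/(\varepsilon\gamma)$ from the start, which is exactly the paper's $q=\beta s\gamma^{-1}$ under the substitution $\beta=(s+\varepsilon)/\varepsilon$. Second, after applying $\log z\leq Qz^{1/Q}$, the paper raises the resulting inequality to the power $1/\gamma$ to get a recursion with $\tau=2$, $p=s$, $q=\beta s$, whereas you keep the $\gamma$-th powers and feed $p=s/\gamma$, $q=Q$, $\tau=2^\gamma$ directly into Lemma~\ref{iteration}; since that lemma's conclusion is invariant under raising the hypothesis to a fixed positive power, the two choices produce identical output. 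Your identification of the residual factor as $(\mu(\sigma B_0)/\mu(B_0))^{(s+\varepsilon)/s}\geq 1$, which may be dropped, matches the paper's handling (the paper simply absorbs $\mu(\sigma B_0)^{-1}\leq\mu(B_0)^{-1}$ one step earlier). All the structural steps — reduction via Lemma~\ref{en2-4}(ii), Construction~\ref{Cons2} applied with $B_0=B(y,R)$, the dichotomy \eqref{eq51} (your verification that it persists inside the larger ball $B(y,R)$ is correct but is also already implicit in the paper), the $L^s$-norm estimate using $r\leq 3\varphi_x(r)/\lambda^2$, and the iteration — coincide with the paper's.
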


\begin{proof}
Note that the estimate in \eqref{measboundthm-52} will follow once we prove that for each  $\beta\in(1,\infty)$, there exists $\kappa_\beta\in(0,\infty)$ satisfying
\begin{equation}\label{ME13-52}
\frac{\mu\big(B(x,r)\big)}{\mu\big(B(y,R)\big)}\geq \kappa_\beta\bigg(\frac{r}{R}\bigg)^{\beta s/(\beta-1)},
\end{equation}
whenever $x,y\in X$ and $B(x,r)\subseteq B(y,R)$, $0<r\leq R<\infty$. To this end, fix $\beta\in(1,\infty)$ and suppose  $B:=B(x,r)\subseteq B(y,R)$ for some $x,y\in X$ and $0<r\leq R<\infty$. Recall that we may assume that $0<\lambda<1/5$, where $\lambda$ as as in \eqref{U-perf}. As such, appealing to {\it (ii)} of Lemma~\ref{en2-4}, it suffices to only consider the case when $r\leq 3\varphi_x(r)/\lambda^2$. 
Next, let $\widetilde{r}_j$, $\widetilde{u}_j$, $\widetilde{g}_j$, and $\widetilde{B}^j$, $j\in\bbbn$ be as in Construction~\ref{Cons2}.
Then, it follows \eqref{Iue.5-52} (applied here with $B_0:=B(y,R)$) that the functions $\widetilde{u}_j$ and $\widetilde{g}_j$ satisfy 
\begin{equation}\label{beiu-2}
\mvint_{B_0} {\rm exp}\bigg(C_1\frac{\mu(\sigma B_0)^{1/s}|\widetilde{u}_j-(\widetilde{u}_j)_{B_0}|}{R\|\widetilde{g}_j\|_{L^s(\sigma B_0,\mu)}}\bigg)^\gamma\,d\mu\leq C_2,\quad\forall\,j\in\mathbb{N}.
\end{equation}

Observe that for each $j\in\mathbb{N}$, we have
\begin{equation}
\label{byt-1}
R\left(\,\mvint_{\sigma B_0} \big(\widetilde{g_j}\big)^s\, d\mu\right)^{1/s}=
\frac{R\, 2^{j+2}}{\varphi_x(r)}\left(\frac{\mu(\widetilde{B}^j)}{\mu(\sigma B_0)}\right)^{1/s}\leq
\frac{3\, R\, 2^{j+2}}{\lambda^2r}
\left(\frac{\mu(\widetilde{B}^j)}{\mu(B_0)}\right)^{1/s},
\end{equation}
where, in obtaining the inequality in \eqref{byt-1}, we have used the fact that $r\leq 3\varphi_x(r)/\lambda^2$.
Altogether, \eqref{byt-1}, \eqref{beiu-2}, and \eqref{eq51} yield
\begin{align}
\label{fxz-1}
\frac{\mu(\widetilde{B}^{j+1})}{\mu(B_0)}\,
{\rm exp}\bigg(\frac{C_1\lambda^2\,r\,\mu(B_0)^{1/s}}{24\cdot 2^{j}R\,\mu(\widetilde{B}^{j})^{1/s}}\bigg)^\gamma\leq C_2
\end{align}
Note we can assume without loss of generality that $C_2>1$. Then using the estimate $\log(y)\leq q\,y^{1/q}$ (which holds true for all $y,q\in(0,\infty)$) with $q=\beta s\gamma^{-1}$, we may conclude from \eqref{fxz-1} that for each $j\in\mathbb{N}$, there holds
\begin{align*}
\frac{C_1\lambda^2\,r}{24\cdot 2^{j}R}
\cdot\frac{\mu(B_0)^{1/s}}{\mu(\widetilde{B}^{j})^{1/s}}
&\leq\bigg[\log\bigg(C_2\frac{\mu( B_0)}{\mu(\widetilde{B}^{j+1})}\bigg)\bigg]^{1/\gamma}
\leq (\beta s\gamma^{-1})^{1/\gamma}\,C_2^{1/(\beta s)}\bigg(\frac{\mu(B_0)}{\mu(\widetilde{B}^{j+1})}\bigg)
^{1/(\beta s)}.
\end{align*}
Therefore,
\begin{align*}
\mu(\widetilde{B}^{j+1})^{1/(\beta s)}\leq\left(
\frac{24R(\beta s\gamma^{-1})^{1/\gamma}\,C_2^{1/(\beta s)}}{C_1\lambda^2\,r\mu(B_0)^{(\beta-1)/(\beta s)}}\right)\,2^{j}\,\mu(\widetilde{B}^{j})^{1/s}.
\end{align*}

Now, invoking Lemma~\ref{iteration} with
$$
a_j:=\mu(\widetilde{B}^j),\quad
p:=s,\quad
q:=\beta s,\quad
\rho:=\frac{24R(\beta s\gamma^{-1})^{1/\gamma}\,C_2^{1/(\beta s)}}{C_1\lambda^2\,r\mu(B_0)^{(\beta-1)/(\beta s)}}
\quad
\text{and}
\quad
\tau:=2,
$$
and keeping in mind that
$a_1=\mu(\widetilde{B}^1)\leq \mu(B)=\mu(B(x,r))$ and $B_0=B(y,R)$, yields
\begin{equation*}
\begin{split}
1&\leq\mu(B)^{1-1/\beta}\,
\left(\frac{24R(\beta s\gamma^{-1})^{1/\gamma}\,C_2^{1/(\beta s)}}{C_1\lambda^2\,r\mu(B_0)^{(\beta-1)/(\beta s)}}\right)^s\,
2^{\beta s^2/(\beta s-s)}\\
&=
\left(\frac{\mu(B(x,r))}{\mu(B(y,R))}\right)^{(\beta-1)/\beta}
\left(\frac{R}{r}\right)^s
\left(\frac{24(\beta s\gamma^{-1})^{1/\gamma}\,C_2^{1/(\beta s)}}{C_1\lambda^2}\right)^s\, 2^{\beta s/(\beta-1)}.
\end{split}
\end{equation*}
Therefore, 
$$
\frac{\mu(B(x,r))}{\mu(B(y,R))}\geq
\left(\frac{C_1\lambda^2}{24(\beta s\gamma^{-1})^{1/\gamma}\,C_2^{1/(\beta s)}}\right)^s\, 2^{-\beta^2 s/(\beta-1)^2}\,
\left(\frac{r}{R}\right)^{\beta s/(\beta-1)}.
$$
This finishes the proof \eqref{ME13-52} and, in turn, the proof of the theorem.
\end{proof}

\section{The Case $p>s$}
\label{S6}

\begin{theorem}\label{CX-03}
Suppose that $(X,d,\mu)$ is a uniformly perfect metric measure space and fix $s,p\in(0,\infty)$ satisfying
$p>s$. Then the following two statements are equivalent. 
\begin{enumerate}[(a)]
\item There exists a finite constant $\kappa>0$ such that
\begin{equation}
\label{Ge8-31}
\mu\big(B(x,r)\big)\geq \kappa\,r^s\,\,\,\mbox{for every }\,x\in X
\mbox{ and every finite }r\in\big(0,{\rm diam}(X)\big].
\end{equation}

\item There exists a constant $C_H\in(0,\infty)$ 
with the property that for each $u\in M^{1,p}(X,d,\mu)$ and $g\in D(u)$, there holds
\begin{equation}\label{Knv-874}
|u(x)-u(y)|\leq C_Hd(x,y)^{1-s/p}\|g\|_{L^{p}(X,\mu)},
\qquad\forall\,x,y\in X,
\end{equation}
Hence, every function $u\in M^{1,p}(X,d,\mu)$
has H\"older continuous representative of order $(1-s/p)$
on $X$.
\end{enumerate}
\end{theorem}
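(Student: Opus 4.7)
The plan is to prove both implications, noting that the (a)$\Rightarrow$(b) direction is a direct consequence of earlier material, while the (b)$\Rightarrow$(a) direction is where the real work lies.

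For (a)$\Rightarrow$(b), I would apply Theorem~\ref{embedding}(c) to enclosing balls. Given $x, y \in X$ with $x \neq y$, choose a ball $B_0 = B(x, R_0)$ with $R_0 \in (d(x,y), \diam(X)]$ finite so that $y \in B_0$; the corner case $d(x,y) = \diam(X) < \infty$ is handled by a standard continuity/approximation argument, using the fact that the H\"older representative is globally continuous and that uniform perfectness precludes isolated points. Exactly as in the opening of the proof of Theorem~\ref{PlessS}, the lower bound \eqref{Ge8-31} upgrades to the condition $V(\sigma B_0, s, \kappa\sigma^{-s})$: every $B(w, \rho) \subseteq \sigma B_0$ with $\rho \leq \sigma R_0$ satisfies $\sigma^{-1}\rho \leq R_0 \leq \diam(X)$, and hence $\mu(B(w, \rho)) \geq \mu(B(w, \sigma^{-1}\rho)) \geq \kappa\sigma^{-s}\rho^s$. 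Inequality \eqref{eq30} then yields the pointwise H\"older estimate for $x, y \in B_0$ with a constant depending only on $s, p, \sigma, \kappa$, and combining it with the trivial bound $\int_{\sigma B_0} g^p\,d\mu \leq \|g\|_{L^p(X,\mu)}^p$ gives \eqref{Knv-874}.

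For (b)$\Rightarrow$(a), I would fix $z \in X$, a finite $r \in (0, \diam(X)]$, set $B := B(z, r)$, and pick $\lambda \in (0, 1/5)$ as in \eqref{U-perf}. Lemma~\ref{en2-4}(i) reduces the task to proving \eqref{Ge8-31} under the extra hypothesis $r \leq 3\varphi_z(r)/\lambda^2$, which in particular forces $\varphi_z(r) \geq \lambda^2 r/3$. Under this restriction, Construction~\ref{Cons2} supplies, for each $j \in \bbbn$, a test function $\widetilde{u}_j \in M^{1,p}(X,d,\mu)$ satisfying $\widetilde{u}_j(z) = 1$ and $\widetilde{u}_j \equiv 0$ on $X \setminus \widetilde{B}^j$, where $\widetilde{B}^j := B(z, \widetilde{r}_j) \subseteq B$, together with Hajlasz gradient $\widetilde{g}_j = 2^{j+2}\varphi_z(r)^{-1}\chi_{\widetilde{B}^j}$. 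The construction also records $\mu(B \setminus \widetilde{B}^j) \geq \frac{1}{2}\mu(B) > 0$, so I may pick $y \in B \setminus \widetilde{B}^j$; for this point $\widetilde{u}_j(y) = 0$ and $d(z, y) < r$. Applying \eqref{Knv-874} to the pair $(\widetilde{u}_j, \widetilde{g}_j)$ at the points $(z, y)$ and exploiting $1 - s/p > 0$ to enlarge $d(z, y)^{1-s/p}$ up to $r^{1-s/p}$ gives
\begin{equation*}
1 \;=\; |\widetilde{u}_j(z) - \widetilde{u}_j(y)| \;\leq\; C_H\,d(z, y)^{1-s/p}\,\|\widetilde{g}_j\|_{L^p(X,\mu)} \;\leq\; C_H\,r^{1-s/p}\,\|\widetilde{g}_j\|_{L^p(X,\mu)}.
\end{equation*}
Substituting $\|\widetilde{g}_j\|_{L^p(X,\mu)}^p = 2^{p(j+2)}\varphi_z(r)^{-p}\mu(\widetilde{B}^j)$, solving for $\mu(\widetilde{B}^j)$, invoking $\varphi_z(r) \geq \lambda^2 r/3$, and specializing to $j = 1$ (so $\widetilde{B}^1 \subseteq B$) yields $\mu(B) \geq \mu(\widetilde{B}^1) \geq \kappa(s, p, \lambda, C_H)\, r^s$, which is \eqref{Ge8-31} in the reduced case.

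The main hurdle is (b)$\Rightarrow$(a), but the argument here is actually lighter than the iterative schemes employed for the sub-critical cases in Theorems~\ref{PlessS} and~\ref{EMT}: because the exponent $1 - s/p$ in \eqref{Knv-874} is strictly positive, a single application of the pointwise H\"older inequality to the Construction~\ref{Cons2} test function already delivers the lower mass bound, and the iteration scheme of Lemma~\ref{iteration} is not needed. The one genuinely delicate point is the existence of a test point $y$ outside the support of $\widetilde{u}_j$ with distance to $z$ still controlled by the original radius $r$; this is precisely what the measure estimate $\mu(B \setminus \widetilde{B}^j) \geq \frac{1}{2}\mu(B)$, a consequence of \eqref{LLk-3} as recorded inside Construction~\ref{Cons2}, provides.
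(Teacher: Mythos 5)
Your proof is correct, but the \textbf{(b) $\Rightarrow$ (a)} direction takes a more circuitous route than the paper's. You funnel the argument through Lemma~\ref{en2-4}\textit{(i)} to reduce to the regime $r\leq 3\varphi_z(r)/\lambda^2$, then deploy the Construction~\ref{Cons2} bump functions $\widetilde u_j$ with the measure estimate $\mu(B\setminus\widetilde B^j)\geq\tfrac12\mu(B)$ to locate a test point. None of that machinery is needed here. The paper simply fixes $x\in X$ and a finite $r\in(0,\diam(X)]$, handles $B(x,r)=X$ trivially, and otherwise takes the single bump function $u:=\Phi_{0,\lambda r}$ (Lipschitz constant $(\lambda r)^{-1}$, $u(x)=1$, $u\equiv 0$ off $B(x,\lambda r)$) with $g:=(\lambda r)^{-1}\chi_{B(x,\lambda r)}\in D(u)$; uniform perfectness directly supplies a point $y\in B(x,r)\setminus B(x,\lambda r)$, and one application of \eqref{Knv-874} at the pair $(x,y)$ gives $1\leq C_H\lambda^{-1}r^{-s/p}\mu(B(x,r))^{1/p}$, hence $\kappa=(\lambda/C_H)^p$. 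Your version works because a single Hölder evaluation already closes the loop when $1-s/p>0$ (as you correctly observe), but the detour through $\varphi_z(r)$ and the $\widetilde B^j$ adds no power and obscures that the argument here is genuinely elementary; the iterative apparatus of Sections~\ref{S3}--\ref{S5} only earns its keep in the $p\leq s$ regime.

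A second, minor point concerns \textbf{(a) $\Rightarrow$ (b)}: you choose $R_0\in(d(x,y),\diam(X)]$, which is empty when $d(x,y)=\diam(X)<\infty$, and you propose to patch this with a continuity/approximation argument. That patch is unnecessary and somewhat misleading; the cleaner fix, as the paper does, is to allow $R_0\in(0,2\diam(X)]$ in the first place and verify the $V(2B_0,s,\widetilde\kappa)$ condition using $\mu(B(w,\rho))\geq\mu(B(w,\rho/4))\geq\kappa 4^{-s}\rho^s$ for $\rho\leq 2R_0$. Then any pair $x,y\in X$ sits inside some admissible $B_0$ without any limiting argument.
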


\begin{proof}
We begin by proving the implication {\it (a)} $\Rightarrow$ {\it (b)}. Fix any ball $B_0$ having finite radius $R_0\in\big(0,2\,{\rm diam}(X)\big]$. 
If $B(x,r)\subseteq 2B_0$ and $r\in (0,2R_0]$, then $4^{-1}r\leq{\rm diam}(X)$, and \eqref{Ge8-31} gives
\begin{equation*}
\mu(B(x,r))\geq\mu(B(x,4^{-1}r))\geq\kappa(4^{-1}r)^s=\widetilde{\kappa}r^s,
\end{equation*}
where $\widetilde{\kappa}=\kappa 4^{-s}$. Thus, $\mu$ satisfies the $V(2B_0,s,\widetilde{\kappa})$ condition (see \eqref{measbound}).
As such, part {\it (c)} in Theorem~\ref{embedding} guarantees the existence of a constant $C\in(0,\infty)$ (independent of $B_0$) with the property that whenever $u\in M^{1,p}(2B_0,d,\mu)$ and $g\in D(u)$, there holds
\begin{equation}\label{Jnb.1}
|u(x)-u(y)|\leq C\,\widetilde{\kappa}^{-1/p}d(x,y)^{1-s/p}\left(\, \int_{2B_0} g^p\, d\mu\right)^{\!\!\!1/p}
\quad
\text{for all $x,y\in B_0$.}
\end{equation}
Now, if  $u\in M^{1,p}(X,d,\mu)$ and $g\in D(u)$, then their pointwise restrictions to the ball $2B_0$ (still denoted by  $u$ and $g$) continue to satisfy \eqref{eq21}. Hence, this restriction of $u$ belongs to $M^{1,p}(2B_0,d,\mu)$, and $g$ remains a generalized gradient of $u$. Therefore, by \eqref{Jnb.1} we have
\begin{equation}\label{Jnb.10}
|u(x)-u(y)|\leq C\,\widetilde{\kappa}^{-1/p}d(x,y)^{1-s/p}\|g\|_{L^{p}(X,\mu)}
\quad
\text{for all $x,y\in B_0$.}
\end{equation}
Given that the constants $C$ and $\widetilde{\kappa}$
are independent of the arbitrarily chosen $B_0$, it follows 
that \eqref{Jnb.10} implies \eqref{Knv-874}, finishing
the proof of {\it (a)} $\Rightarrow$ {\it (b)}.
 
For the reverse implication, fix $x\in X$ and $r\in\big(0,{\rm diam}(X)\big]$, finite. If $B(x,r)=X$ then $\diam(X)\in(0,\infty)$ and 
\begin{equation*}
\mu\big(B(x,r)\big)=\mu(X)\geq \mu(X)[{\rm diam}(X)]^{-s}\,r^s.
\end{equation*}
Thus, in what follows, 
we may assume that $X\setminus B(x,r)\neq\emptyset$.

Let $\lambda\in(0,1)$ be as in \eqref{U-perf} and define 
$u:X\to[0,1]$ by setting $u:=\Phi_{0,\lambda r}$,
where the function $\Phi_{0,\lambda r}$ is as in Lemma~\ref{Lipbump}.
Then, $u\in M^{1,p}(X,d,\mu)$ and the function $g:=(\lambda r)^{-1}\chi_{B(x,\lambda r)}$ belongs to $D(u)$. Moreover, since $(X,d)$ is assumed to be
uniformly perfect, we may select a point 
$y\in B(x,r)\setminus B(x,\lambda r)$.
Then by \eqref{Knv-874} (used here
with this choice of $u$ and $g$), we have
$$
1=|u(x)-u(y)|\leq C_Hd(x,y)^{1-s/p}\|g\|_{L^{p}(X,\mu)}
\leq C_H\lambda^{-1}r^{-s/p}\mu\big(B(x,r)\big)^{1/p},
$$
from which \eqref{Ge8-31} follows with 
$\kappa:=(\lambda/C_H)^p\in(0,\infty)$. This finishes the proof
of the reverse implication and, in turn, the proof of the
theorem.
\end{proof}

\begin{theorem}\label{CX-032}
Suppose that $(X,d,\mu)$ is a uniformly perfect metric measure space and fix $s\in(0,\infty)$, $\sigma\in(1,\infty)$. Then for 
each $p\in(s,\infty)$, the following two statements are equivalent. 
\begin{enumerate}[(a)]
\item There exists a constant $\kappa\in(0,\infty)$ satisfying
\begin{equation}
\label{measboundthm-tt2}
\frac{\mu\big(B(x,r)\big)}{\mu\big(B(y,R)\big)}\geq \kappa\bigg(\frac{r}{R}\bigg)^{\!\!s},
\end{equation}
whenever $x,y\in X$ satisfy $B(x,r)\subseteq B(y,R)$ and $0<r\leq R<\infty$.

\item There exists a finite constant $C_H>0$ 
such that for each ball $B_0:=B(x_0,R_0)$ with $x_0\in X$ and $R_0\in(0,\infty)$, and each $u\in M^{1,p}(\sigma B_0,d,\mu)$ and $g\in D(u)$, there holds
\begin{equation}\label{XBP-82}
|u(x)-u(y)|\leq C_Hd(x,y)^{1-s/p}R_0^{s/p}\left(\,\mvint_{\sigma B_0}g^p\,d\mu\right)^{\!\!\!1/p},
\quad\forall\,x,y\in B_0,
\end{equation}
Hence, every function $u\in M^{1,p}(\sigma B_0,d,\mu)$
has H\"older continuous representative of order $(1-s/p)$
on $B_0$.
\end{enumerate}
\end{theorem}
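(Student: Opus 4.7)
The plan splits naturally into two implications. For \textbf{(a)} $\Rightarrow$ \textbf{(b)}: fix any ball $B_0 = B(x_0, R_0)$ with $R_0\in(0,\infty)$, and observe that the ratio hypothesis in (a), applied to the inclusion $B(x,r)\subseteq\sigma B_0$ with $r\in(0,\sigma R_0]$, yields
$$\mu\big(B(x,r)\big)\geq\kappa(\sigma R_0)^{-s}\mu(\sigma B_0)\, r^s,$$
so $\mu$ satisfies the $V(\sigma B_0,s,b)$ condition with $b := \kappa\mu(\sigma B_0)(\sigma R_0)^{-s}$. I would then invoke the H\"older estimate \eqref{eq30} from part \textit{(c)} of Theorem~\ref{embedding}. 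Substituting this value of $b$ produces a factor $\mu(\sigma B_0)^{-1/p}$ (from $b^{-1/p}$) which cancels the factor $\mu(\sigma B_0)^{1/p}$ that arises when rewriting $\left(\int_{\sigma B_0}g^p\,d\mu\right)^{1/p} = \mu(\sigma B_0)^{1/p}\left(\mvint_{\sigma B_0}g^p\,d\mu\right)^{1/p}$; what remains is precisely \eqref{XBP-82} with $C_H = C\kappa^{-1/p}\sigma^{s/p}$.

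For \textbf{(b)} $\Rightarrow$ \textbf{(a)}: fix $x,y\in X$ with $B(x,r)\subseteq B(y,R)$ and $0<r\leq R<\infty$. The case $B(x,r)=X$ is trivial, so I may assume $r\leq\diam(X)$; by Lemma~\ref{en2-4}\textit{(ii)} it suffices to treat the restricted regime $r\leq 3\varphi_x(r)/\lambda^2$ (with $0<\lambda<1/5$ as in \eqref{U-perf}), losing only a factor of $\lambda^s$ when transferring to arbitrary $r$. In this regime $\varphi_x(r)\geq\lambda^2 r/3$ and Construction~\ref{Cons2} applies with $B_0 := B(y,R)$, producing $\widetilde{u}_1\in M^{1,p}(\sigma B_0,d,\mu)$ and generalized gradient $\widetilde{g}_1 = 8\varphi_x(r)^{-1}\chi_{\widetilde{B}^1}\in D(\widetilde{u}_1)$, where $\widetilde{B}^1 = B(x, 3\varphi_x(r)/4)\subseteq B(x,r)$. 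This test function satisfies $\widetilde{u}_1(x) = 1$, and by Construction~\ref{Cons2} the set $B(x,r)\setminus\widetilde{B}^1$ has positive measure with $\widetilde{u}_1\equiv 0$ there.

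Select any $z\in B(x,r)\setminus\widetilde{B}^1$ and apply \eqref{XBP-82} to the pair $(x,z)$ with $B_0 = B(y,R)$. Since $|\widetilde{u}_1(x)-\widetilde{u}_1(z)| = 1$, $d(x,z)<r$, and $\varphi_x(r)\geq\lambda^2 r/3$, one obtains
\begin{equation*}
1\leq C_H\, r^{1-s/p}R^{s/p}\cdot\frac{8}{\varphi_x(r)}\bigg(\frac{\mu(\widetilde{B}^1)}{\mu(\sigma B_0)}\bigg)^{\!\!1/p}\leq\frac{24 C_H}{\lambda^2}\bigg(\frac{R}{r}\bigg)^{\!\!s/p}\bigg(\frac{\mu(\widetilde{B}^1)}{\mu(\sigma B_0)}\bigg)^{\!\!1/p}.
\end{equation*}
Raising to the $p$-th power and using $\widetilde{B}^1\subseteq B(x,r)$ and $B(y,R)\subseteq\sigma B_0$ yields $\mu(B(x,r))/\mu(B(y,R))\geq\big(\lambda^2/(24 C_H)\big)^{\!p}(r/R)^s$ in the restricted setting, whence Lemma~\ref{en2-4}\textit{(ii)} delivers the full conclusion with $\kappa = \lambda^s\big(\lambda^2/(24 C_H)\big)^{\!p}$.

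The notable feature of this plan is that \emph{no iteration via Lemma~\ref{iteration} is required}: a single bump function from Construction~\ref{Cons2} combined with the pointwise H\"older estimate already produces the sharp exponent $s$ in one step, in contrast to the proofs of Theorems~\ref{PlessS} and \ref{EMT} where the weaker integral estimates force a geometric-series argument. The main subtlety is making sure the radius of $\widetilde{B}^1$ is comparable to $r$, which is exactly what the reduction $r\leq 3\varphi_x(r)/\lambda^2$ via Lemma~\ref{en2-4}\textit{(ii)} accomplishes, and the existence of a point $z\in B(x,r)\setminus\widetilde{B}^1$ where $\widetilde{u}_1$ vanishes, which is furnished by the positive-measure assertion in Construction~\ref{Cons2}.
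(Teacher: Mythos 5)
Your proof is correct, and the $(a)\Rightarrow(b)$ direction is exactly the paper's argument: extract the $V(\sigma B_0,s,b)$ condition with $b=\kappa\mu(\sigma B_0)(\sigma R_0)^{-s}$ from \eqref{measboundthm-tt2} and plug into \eqref{eq30}. The $(b)\Rightarrow(a)$ direction, however, takes a noticeably heavier route than the paper. You invoke Lemma~\ref{en2-4}\textit{(ii)} to reduce to the regime $r\le 3\varphi_x(r)/\lambda^2$, build the annular bump $\widetilde{u}_1$ from Construction~\ref{Cons2}, and then test the H\"older estimate once; your computations and constants check out, and indeed no iteration via Lemma~\ref{iteration} is needed. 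But the paper's proof is simpler still and bypasses both Lemma~\ref{en2-4} and Construction~\ref{Cons2} entirely: after disposing of the trivial case $B(x,r)=X$, it tests \eqref{XBP-82} with the single bump function $u:=\Phi_{0,\lambda r}$ (so $g=(\lambda r)^{-1}\chi_{B(x,\lambda r)}\in D(u)$) and uses uniform perfectness directly to produce a point $w\in B(x,r)\setminus B(x,\lambda r)$ at which $u(w)=0$, giving $\kappa=(\lambda/C_H)^p$ in one line. The point you miss is that when the conclusion is a genuine \emph{pointwise} oscillation bound (as opposed to an integral one, as in the $p<s$ and $p=s$ cases), the reduction through $\varphi_x$ is unnecessary: one does not need the test ball's radius to be comparable to $r$ via the measure-theoretic bisection radius $\varphi_x(r)$, because the two test points $x$ and $w$ can be chosen metrically, using \eqref{U-perf} alone, with $d(x,w)\ge\lambda r$ built in. Your version is uniform with the machinery used in Theorems~\ref{PlessS}, \ref{EMT}, \ref{AS-U-X}, which is a reasonable organizational choice, but it carries along more structure (the $\varphi_x$ apparatus, the positive-measure lower bound for $B\setminus\widetilde{B}^1$) than this particular implication requires.
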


\begin{proof}
We begin proving the implication {\it (a)} $\Rightarrow$ {\it (b)}. Given a ball $B_0:=B(x_0,R_0)$ with $x_0\in X$ and 
$R_0\in(0,\infty)$, let $B(y,R):=\sigma B_0$. 
Then inequality 
\eqref{measboundthm-tt2} implies that the measure $\mu$ satisfies the $V(\sigma B_0,s,b)$ condition with 
$b=\kappa\mu(\sigma B_0)(\sigma R_0)^{-s}$.
As such, for this value of $b$ the inequality displayed in \eqref{XBP-82} follows immediately from 
\eqref{eq30} in Theorem~\ref{embedding}.

In order to prove the implication {\it (b)} implies {\it (a)}, fix points $x,y\in X$ and suppose that $B:=B(x,r)\subseteq B(y,R)$, where $0<r\leq R<\infty$. Specializing \eqref{XBP-82} to the case when $B_0:=B(y,R)$ implies
that
\begin{equation}\label{XBP-83}
|u(z)-u(w)|\leq C_H\,d(z,w)^{1-s/p}R^{s/p}\left(\,\mvint_{\sigma B_0}g^p\,d\mu\right)^{\!\!\!1/p},
\quad\forall\,z,w\in B(y,R),
\end{equation}
whenever $u\in M^{1,p}(\sigma B_0,d,\mu)$ and $g\in D(u)$.
If $B(x,r)=X$ then $B(y,R)=X$ given that $B(x,r)\subseteq B(y,R)$. Thus, in this
case, \eqref{measboundthm-tt2} trivially holds with
any $\kappa\in(0,1]$. As such, in what follows we will assume that $X\setminus B(x,r)\neq\emptyset$.

Let $\lambda\in(0,1)$ be as in \eqref{U-perf} and define 
$u:X\to[0,1]$ by setting $u:=\Phi_{0,\lambda r}$,
where the function $\Phi_{0,\lambda r}$ is as in Lemma~\ref{Lipbump}.
Then, $u\in M^{1,p}(\sigma B_0,d,\mu)$ and the function 
$g:=(\lambda r)^{-1}\chi_{B(x,\lambda r)}$ belongs to $D(u)$. 
Since $(X,d)$ is assumed to be
uniformly perfect, we may select a point 
$w\in B(x,r)\setminus B(x,\lambda r)$.
Then by \eqref{XBP-83}, we have
(keeping in mind $\sigma>1$)
\begin{align}
1=|u(x)-u(w)|&\leq  C_H\,d(x,w)^{1-s/p}R^{s/p}\left(\,\mvint_{\sigma B_0}g^p\,d\mu\right)^{\!\!\!1/p}
\nonumber\\
&\leq C_H\,\lambda^{-1}\bigg(\frac{R}{r}\bigg)^{s/p}\bigg(\frac{\mu(B(x,\lambda r)}{\mu(\sigma B_0)}\bigg)^{\!\!1/p}
\nonumber\\
&\leq C_H\,\lambda^{-1}\bigg(\frac{R}{r}\bigg)^{s/p}\bigg(\frac{\mu(B)}{\mu(B_0)}\bigg)^{\!\!1/p},
\nonumber
\end{align}
from which \eqref{measboundthm-tt2} follows with 
$\kappa:=(\lambda/C_H)^p\in(0,\infty)$. This finishes the proof
of the reverse implication and, in turn, the proof of the
theorem.
\end{proof}

\end{document}